\begin{document}  

\newcommand{\nc}{\newcommand}
\newcommand{\delete}[1]{}
\nc{\dfootnote}[1]{{}}          
\nc{\ffootnote}[1]{\dfootnote{#1}}
\nc{\mfootnote}[1]{\footnote{#1}} 
\nc{\todo}[1]{\tred{To do:} #1}

\nc{\mlabel}[1]{\label{#1}}  
\nc{\mcite}[1]{\cite{#1}}  
\nc{\mref}[1]{\ref{#1}}  
\nc{\mbibitem}[1]{\bibitem{#1}} 

\delete{
\nc{\mlabel}[1]{\label{#1}  
{\hfill \hspace{1cm}{\bf{{\ }\hfill(#1)}}}}
\nc{\mcite}[1]{\cite{#1}{{\bf{{\ }(#1)}}}}  
\nc{\mref}[1]{\ref{#1}{{\bf{{\ }(#1)}}}}  
\nc{\mbibitem}[1]{\bibitem[\bf #1]{#1}} 
}

\nc{\mkeep}[1]{\marginpar{{\bf #1}}} 

\newtheorem{theorem}{Theorem}[section]
\newtheorem{prop}[theorem]{Proposition}
\newtheorem{defn}[theorem]{Definition}
\newtheorem{lemma}[theorem]{Lemma}
\newtheorem{coro}[theorem]{Corollary}
\newtheorem{prop-def}[theorem]{Proposition-Definition}
\newtheorem{claim}{Claim}[section]
\newtheorem{remark}[theorem]{Remark}
\newtheorem{propprop}{Proposed Proposition}[section]
\newtheorem{conjecture}{Conjecture}
\newtheorem{exam}[theorem]{Example}
\newtheorem{assumption}{Assumption}
\newtheorem{condition}[theorem]{Assumption}

\renewcommand{\labelenumi}{{\rm(\roman{enumi})}}
\renewcommand{\theenumi}{\roman{enumi}}

\nc{\tred}[1]{\textcolor{red}{#1}}
\nc{\tblue}[1]{\textcolor{blue}{#1}}
\nc{\tgreen}[1]{\textcolor{green}{#1}}
\nc{\tpurple}[1]{\textcolor{purple}{#1}}
\nc{\btred}[1]{\textcolor{red}{\bf #1}}
\nc{\btblue}[1]{\textcolor{blue}{\bf #1}}
\nc{\btgreen}[1]{\textcolor{green}{\bf #1}}
\nc{\btpurple}[1]{\textcolor{purple}{\bf #1}}

\nc{\li}[1]{\textcolor{red}{Li:#1}}
\nc{\cm}[1]{\textcolor{blue}{Chengming: #1}}
\nc{\xiang}[1]{\textcolor{green}{Xiang: #1}}

\nc{\adec}{\check{;}} \nc{\aop}{\alpha}
\nc{\dftimes}{\widetilde{\otimes}} \nc{\dfl}{\succ}
\nc{\dfr}{\prec} \nc{\dfc}{\circ} \nc{\dfb}{\bullet}
\nc{\dft}{\star} \nc{\dfcf}{{\mathbf k}} \nc{\spr}{\cdot}
\nc{\twopr}{\circ} \nc{\tspr}{\star} \nc{\sempr}{\ast}
\nc{\disp}[1]{\displaystyle{#1}}
\nc{\bin}[2]{ (_{\stackrel{\scs{#1}}{\scs{#2}}})}  
\nc{\binc}[2]{ \left (\!\! \begin{array}{c} \scs{#1}\\
    \scs{#2} \end{array}\!\! \right )}  
\nc{\bincc}[2]{  \left ( {\scs{#1} \atop
    \vspace{-.5cm}\scs{#2}} \right )}  
\nc{\sarray}[2]{\begin{array}{c}#1 \vspace{.1cm}\\ \hline
    \vspace{-.35cm} \\ #2 \end{array}}
\nc{\bs}{\bar{S}} \nc{\dcup}{\stackrel{\bullet}{\cup}}
\nc{\dbigcup}{\stackrel{\bullet}{\bigcup}} \nc{\etree}{\big |}
\nc{\la}{\longrightarrow} \nc{\fe}{\'{e}} \nc{\rar}{\rightarrow}
\nc{\dar}{\downarrow} \nc{\dap}[1]{\downarrow
\rlap{$\scriptstyle{#1}$}} \nc{\uap}[1]{\uparrow
\rlap{$\scriptstyle{#1}$}} \nc{\defeq}{\stackrel{\rm def}{=}}
\nc{\dis}[1]{\displaystyle{#1}} \nc{\dotcup}{\,
\displaystyle{\bigcup^\bullet}\ } \nc{\sdotcup}{\tiny{
\displaystyle{\bigcup^\bullet}\ }} \nc{\hcm}{\ \hat{,}\ }
\nc{\hcirc}{\hat{\circ}} \nc{\hts}{\hat{\shpr}}
\nc{\lts}{\stackrel{\leftarrow}{\shpr}}
\nc{\rts}{\stackrel{\rightarrow}{\shpr}} \nc{\lleft}{[}
\nc{\lright}{]} \nc{\uni}[1]{\tilde{#1}} \nc{\wor}[1]{\check{#1}}
\nc{\free}[1]{\bar{#1}} \nc{\den}[1]{\check{#1}} \nc{\lrpa}{\wr}
\nc{\curlyl}{\left \{ \begin{array}{c} {} \\ {} \end{array}
    \right .  \!\!\!\!\!\!\!}
\nc{\curlyr}{ \!\!\!\!\!\!\!
    \left . \begin{array}{c} {} \\ {} \end{array}
    \right \} }
\nc{\leaf}{\ell}       
\nc{\longmid}{\left | \begin{array}{c} {} \\ {} \end{array}
    \right . \!\!\!\!\!\!\!}
\nc{\ot}{\otimes} \nc{\sot}{{\scriptstyle{\ot}}}
\nc{\otm}{\overline{\ot}} \nc{\ora}[1]{\stackrel{#1}{\rar}}
\nc{\ola}[1]{\stackrel{#1}{\la}}
\nc{\pltree}{\calt^\pl} \nc{\epltree}{\calt^{\pl,\NC}}
\nc{\rbpltree}{\calt^r} \nc{\scs}[1]{\scriptstyle{#1}}
\nc{\mrm}[1]{{\rm #1}}
\nc{\dirlim}{\displaystyle{\lim_{\longrightarrow}}\,}
\nc{\invlim}{\displaystyle{\lim_{\longleftarrow}}\,}
\nc{\mvp}{\vspace{0.5cm}} \nc{\svp}{\vspace{2cm}}
\nc{\vp}{\vspace{8cm}} \nc{\proofbegin}{\noindent{\bf Proof: }}
\nc{\proofend}{$\blacksquare$ \vspace{0.5cm}}
\nc{\freerbpl}{{F^{\mathrm RBPL}}}
\nc{\sha}{{\mbox{\cyr X}}}  
\nc{\ncsha}{{\mbox{\cyr X}^{\mathrm NC}}} \nc{\ncshao}{{\mbox{\cyr
X}^{\mathrm NC,\,0}}} \nc{\rpr}{\circ} \nc{\apr}{\cdot}
\nc{\dpr}{{\tiny\diamond}} \nc{\rprpm}{{\rpr}}
\nc{\shpr}{\diamond}    
\nc{\shprm}{\overline{\diamond}}    
\nc{\shpro}{\diamond^0}    
\nc{\shprr}{\diamond^r}     
\nc{\shpra}{\overline{\diamond}^r} \nc{\shpru}{\check{\diamond}}
\nc{\catpr}{\diamond_l} \nc{\rcatpr}{\diamond_r}
\nc{\lapr}{\diamond_a} \nc{\sqcupm}{\ot} \nc{\lepr}{\diamond_e}
\nc{\vep}{\varepsilon} \nc{\labs}{\mid\!} \nc{\rabs}{\!\mid}
\nc{\hsha}{\widehat{\sha}} \nc{\lsha}{\stackrel{\leftarrow}{\sha}}
\nc{\rsha}{\stackrel{\rightarrow}{\sha}} \nc{\lc}{\lfloor}
\nc{\rc}{\rfloor} \nc{\tpr}{\sqcup} \nc{\nctpr}{\vee}
\nc{\plpr}{\star} \nc{\rbplpr}{\bar{\plpr}} \nc{\sqmon}[1]{\langle
#1\rangle} \nc{\forest}{\calf} \nc{\ass}[1]{\alpha({#1})}
\nc{\altx}{\Lambda_X} \nc{\vecT}{\vec{T}} \nc{\onetree}{\bullet}
\nc{\Ao}{\check{A}} \nc{\seta}{\underline{\Ao}}
\nc{\deltaa}{\overline{\delta}} \nc{\trho}{\tilde{\rho}}

\nc{\mmbox}[1]{\mbox{\ #1\ }} \nc{\ann}{\mrm{ann}}
\nc{\Aut}{\mrm{Aut}} \nc{\can}{\mrm{can}} \nc{\twoalg}{{two-sided
algebra}\xspace} \nc{\bwt}{{mass}\xspace}
\nc{\bop}{{modification}\xspace} \nc{\ewt}{{mass}\xspace}
\nc{\ewts}{{masses}\xspace} \nc{\tto}{{extended}\xspace}
\nc{\Tto}{{Extended}\xspace} \nc{\tte}{{extended}\xspace}
\nc{\gyb}{{generalized}\xspace} \nc{\Gyb}{{Generalized}\xspace}
\nc{\MAYBE}{{EAYBE}\xspace} \nc{\GAYBE}{{GAYBE}\xspace}
\nc{\esym}{\vep} \nc{\colim}{\mrm{colim}} \nc{\Cont}{\mrm{Cont}}
\nc{\rchar}{\mrm{char}} \nc{\cok}{\mrm{coker}} \nc{\dtf}{{R-{\rm
tf}}} \nc{\dtor}{{R-{\rm tor}}}
\renewcommand{\det}{\mrm{det}}
\nc{\depth}{{\mrm d}} \nc{\Div}{{\mrm Div}} \nc{\End}{\mrm{End}}
\nc{\Ext}{\mrm{Ext}} \nc{\Fil}{\mrm{Fil}} \nc{\Frob}{\mrm{Frob}}
\nc{\Gal}{\mrm{Gal}} \nc{\GL}{\mrm{GL}} \nc{\Hom}{\mrm{Hom}}
\nc{\hsr}{\mrm{H}} \nc{\hpol}{\mrm{HP}} \nc{\id}{\mrm{id}}
\nc{\im}{\mrm{im}} \nc{\incl}{\mrm{incl}}
\nc{\length}{\mrm{length}} \nc{\LR}{\mrm{LR}} \nc{\mchar}{\rm
char} \nc{\NC}{\mrm{NC}} \nc{\mpart}{\mrm{part}}
\nc{\pl}{\mrm{PL}} \nc{\ql}{{\QQ_\ell}} \nc{\qp}{{\QQ_p}}
\nc{\rank}{\mrm{rank}} \nc{\rba}{\rm{RBA }} \nc{\rbas}{\rm{RBAs }}
\nc{\rbpl}{\mrm{RBPL}} \nc{\rbw}{\rm{RBW }} \nc{\rbws}{\rm{RBWs }}
\nc{\rcot}{\mrm{cot}} \nc{\rest}{\rm{controlled}\xspace}
\nc{\rdef}{\mrm{def}} \nc{\rdiv}{{\rm div}} \nc{\rtf}{{\rm tf}}
\nc{\rtor}{{\rm tor}} \nc{\res}{\mrm{res}} \nc{\SL}{\mrm{SL}}
\nc{\Spec}{\mrm{Spec}} \nc{\tor}{\mrm{tor}} \nc{\Tr}{\mrm{Tr}}
\nc{\mtr}{\mrm{sk}} \nc{\type}{{\bwt}\xspace}

\nc{\ab}{\mathbf{Ab}} \nc{\Alg}{\mathbf{Alg}}
\nc{\Algo}{\mathbf{Alg}^0} \nc{\Bax}{\mathbf{Bax}}
\nc{\Baxo}{\mathbf{Bax}^0} \nc{\RB}{\mathbf{RB}}
\nc{\RBo}{\mathbf{RB}^0} \nc{\BRB}{\mathbf{RB}}
\nc{\Dend}{\mathbf{DD}} \nc{\bfk}{{\bf k}} \nc{\bfone}{{\bf 1}}
\nc{\base}[1]{{a_{#1}}} \nc{\detail}{\marginpar{\bf More detail}
    \noindent{\bf Need more detail!}
    \svp}
\nc{\Diff}{\mathbf{Diff}} \nc{\gap}{\marginpar{\bf
Incomplete}\noindent{\bf Incomplete!!}
    \svp}
\nc{\FMod}{\mathbf{FMod}} \nc{\mset}{\mathbf{MSet}}
\nc{\rb}{\mathrm{RB}} \nc{\Int}{\mathbf{Int}}
\nc{\Mon}{\mathbf{Mon}}
\nc{\remarks}{\noindent{\bf Remarks: }}
\nc{\OS}{\mathbf{OS}} 
\nc{\Rep}{\mathbf{Rep}} \nc{\Rings}{\mathbf{Rings}}
\nc{\Sets}{\mathbf{Sets}} \nc{\DT}{\mathbf{DT}}

\nc{\BA}{{\mathbb A}} \nc{\CC}{{\mathbb C}} \nc{\DD}{{\mathbb D}}
\nc{\EE}{{\mathbb E}} \nc{\FF}{{\mathbb F}} \nc{\GG}{{\mathbb G}}
\nc{\HH}{{\mathbb H}} \nc{\LL}{{\mathbb L}} \nc{\NN}{{\mathbb N}}
\nc{\QQ}{{\mathbb Q}} \nc{\RR}{{\mathbb R}} \nc{\TT}{{\mathbb T}}
\nc{\VV}{{\mathbb V}} \nc{\ZZ}{{\mathbb Z}}


\nc{\calao}{{\mathcal A}} \nc{\cala}{{\mathcal A}}
\nc{\calc}{{\mathcal C}} \nc{\cald}{{\mathcal D}}
\nc{\cale}{{\mathcal E}} \nc{\calf}{{\mathcal F}}
\nc{\calfr}{{{\mathcal F}^{\,r}}} \nc{\calfo}{{\mathcal F}^0}
\nc{\calfro}{{\mathcal F}^{\,r,0}} \nc{\oF}{\overline{F}}
\nc{\calg}{{\mathcal G}} \nc{\calh}{{\mathcal H}}
\nc{\cali}{{\mathcal I}} \nc{\calj}{{\mathcal J}}
\nc{\call}{{\mathcal L}} \nc{\calm}{{\mathcal M}}
\nc{\caln}{{\mathcal N}} \nc{\calo}{{\mathcal O}}
\nc{\calp}{{\mathcal P}} \nc{\calr}{{\mathcal R}}
\nc{\calt}{{\mathcal T}} \nc{\caltr}{{\mathcal T}^{\,r}}
\nc{\calu}{{\mathcal U}} \nc{\calv}{{\mathcal V}}
\nc{\calw}{{\mathcal W}} \nc{\calx}{{\mathcal X}}
\nc{\CA}{\mathcal{A}}

\nc{\fraka}{{\mathfrak a}} \nc{\frakB}{{\mathfrak B}}
\nc{\frakb}{{\mathfrak b}} \nc{\frakd}{{\mathfrak d}}
\nc{\oD}{\overline{D}} \nc{\frakF}{{\mathfrak F}}
\nc{\frakg}{{\mathfrak g}} \nc{\frakk}{{\mathfrak k}}
\nc{\frakm}{{\mathfrak m}} \nc{\frakM}{{\mathfrak M}}
\nc{\frakMo}{{\mathfrak M}^0} \nc{\frakp}{{\mathfrak p}}
\nc{\frakS}{{\mathfrak S}} \nc{\frakSo}{{\mathfrak S}^0}
\nc{\fraks}{{\mathfrak s}} \nc{\os}{\overline{\fraks}}
\nc{\frakT}{{\mathfrak T}} \nc{\oT}{\overline{T}}
\nc{\frakX}{{\mathfrak X}} \nc{\frakXo}{{\mathfrak X}^0}
\nc{\frakx}{{\mathbf x}}
\nc{\frakTx}{\frakT}      
\nc{\frakTa}{\frakT^a}        
\nc{\frakTxo}{\frakTx^0}   
\nc{\caltao}{\calt^{a,0}}   
\nc{\ox}{\overline{\frakx}} \nc{\fraky}{{\mathfrak y}}
\nc{\frakz}{{\mathfrak z}} \nc{\oX}{\overline{X}}

\font\cyr=wncyr10

\nc{\redtext}[1]{\textcolor{red}{#1}}


\title[$\calo$-operators and Yang-Baxter equations]{$\calo$-operators on associative algebras and associative Yang-Baxter equations}

\author{Chengming Bai}
\address{Chern Institute of Mathematics\& LPMC, Nankai University, Tianjin 300071, P.R. China}
         \email{baicm@nankai.edu.cn}
\author{Li Guo}
\address{Department of Mathematics and Computer Science,
         Rutgers University,
         Newark, NJ 07102, U.S.A.}
\email{liguo@newark.rutgers.edu}
\author{Xiang Ni}
\address{Chern Institute of Mathematics \& LPMC, Nankai
University, Tianjin 300071, P.R.
China}\email{xiangn$_-$math@yahoo.cn}

\date{\today}


\begin{abstract}
We introduce the concept of an \tto $\calo$-operator that
generalizes the well-known concept of a Rota-Baxter operator. We
study the associative products coming from these operators and
establish the relationship between \tto $\calo$-operators and the
associative Yang-Baxter equation, \tto associative Yang-Baxter
equation and \gyb Yang-Baxter equation.
\end{abstract}


\maketitle

\tableofcontents

\setcounter{section}{0}
{\ }
\vspace{-1cm}

\section{Introduction}
This paper studies the connection between two concepts in quite different
contexts. One is that of a Rota-Baxter operator
originated from the probability study of Glenn Baxter~\mcite{Bax},
influenced by the combinatorial interests of Gian-Carlo Rota~\mcite{R1,R3} and applied broadly
in mathematics and physics in recent years~\mcite{CK2,EG1,EGK,EGM,GK1,GZ1}. The
other concept is that of a solution of the associative Yang-Baxter
equations which is an analogue of the classical Yang-Baxter
equation in mathematical physics, named after the well-known
physicists~\mcite{Ba,Ya}.  A connection between these two objects
were first established by Aguiar~\mcite{Ag1,Ag2} who showed that a
solution of the associative Yang-Baxter equation gives rise to a
Rota-Baxter operator of weight zero. Such connection has been
pursued further in several subsequent
papers~\mcite{Ag3,Bai2,E1}.

We revisit this connection with an alternative approach in order to gain better understanding of the relationship between these two concepts. On one hand we generalize the concept of Rota-Baxter operators to that of $\calo$-operators and further to \tto $\calo$-operators. On the other hand we investigate the operator properties of the associative Yang-Baxter equation motivated by the study in the Lie algebra case~\mcite{Bai1}. The $\calo$-operator is a relative version of the Rota-Baxter operator and, in the context of Lie algebras, was defined by Kupershmidt~\mcite{Ku} in the study of Yang-Baxter equations and can be traced back to Bordemann~\mcite{Bo} in integrable systems. Through this approach, we show that the operator property of solutions of the associative Yang-Baxter equation is to a large extent characterized by $\calo$-operators.

{\bf Notations: } In this paper, $\bfk$ denotes a field
and is often taken to have characteristic not equal
to 2.
 By an algebra we mean an associative (not
necessarily unitary) $\bfk$-algebra, unless otherwise stated.

\subsection{Rota-Baxter algebras and Yang-Baxter equations}
\mlabel{ss:ayb}


We recall concepts and relations that motivated our study.
\begin{defn}{\rm Let $R$ be a $\bfk$-algebra and let $\lambda\in \bfk$ be given. If a $\bfk$-linear map $P:R\rightarrow R$ satisfies the {\bf Rota-Baxter relation}:
\begin{equation}
P(x)P(y)=P(P(x)y)+P(xP(y))+ \lambda P(xy),
\quad \forall x,y \in R,
\mlabel{eq:rbo}
\end{equation}
then $P$ is called a {\bf Rota-Baxter
operator of weight $\lambda$} and $(R,P)$ is called a {\bf Rota-Baxter
algebra of weight $\lambda$.}
}
\mlabel{de:rb}
\end{defn}

For simplicity, we will only discuss the case of Rota-Baxter operators of weight zero in the introduction.

Note that the relation~(\mref{eq:rbo}) still makes sense when $R$ is
replaced by a $\bfk$-module with a binary operation. When the binary
operation is the Lie bracket and if in addition, the Lie algebra is
equipped with a nondegenerate symmetric invariant bilinear form,
then a skew-symmetric solution of the {\bf classical
Yang-Baxter equation}
\begin{equation}
[r_{12},r_{13}]+[r_{12},r_{23}]+[r_{13},r_{23}]=0.
\mlabel{eq:cyb}
\end{equation}
is just a Rota-Baxter operator of weight zero.
We refer the reader to~\mcite{Bai1,E2,Se} for further details.

\smallskip

We consider the following associative analogue of the classical Yang-Baxter equation~(\mref{eq:cyb}).
\begin{defn}
{\rm Let $A$ be a $\bfk$-algebra.
An element $r\in A\otimes A$ is called a {\bf solution of the associative Yang-Baxter equation in $A$} if it satisfies the relation
\begin{equation}
r_{12} r_{13}+r_{13} r_{23}-r_{23} r_{12}=0,
\mlabel{eq:aybe}
\end{equation}
called the {\bf associative Yang-Baxter equation (AYBE)}.
Here, for $r=\sum\limits_i
a_i\otimes b_i\in A\ot A$, denote
\begin{equation}
r_{12}=\sum\limits_i a_i\otimes b_i\otimes 1,\;\;
r_{13}=\sum\limits_i a_i\otimes1\otimes b_i
,\;\;r_{23}=\sum\limits_i 1\otimes a_i\otimes b_i. \mlabel{eq:r12}
\end{equation}
}
\mlabel{de:ayb}
\end{defn}
Both Eq.~(\mref{eq:aybe}) and another associative analogue of the classical Yang-Baxter equation~(\mref{eq:cyb})
\begin{equation}
r_{13}r_{12}-r_{12}r_{23}+r_{23}r_{13}=0.
\mlabel{eq:aayb}
\end{equation}
were introduced by Aguiar~\mcite{Ag1,Ag2,Ag3}. In fact,
Eq.~(\mref{eq:aybe}) is just Eq.~(\mref{eq:aayb}) in the opposite
algebra~\mcite{Ag3}. When $r$ is skew-symmetric it is easy to
see that Eq.~(\mref{eq:aybe}) comes from Eq.~(\mref{eq:aayb}) under
the operation $\sigma_{13} (x\otimes y\otimes z)=z\otimes y\otimes
x$. While Eq.~(\mref{eq:aayb}) was emphasized
in~\mcite{Ag1,Ag2,Ag3}, we will work with Eq.~(\mref{eq:aybe})
for notational convenience and to be consistent with some of the earlier works on connections with antisymmetric infinitesimal
bialgebras~\mcite{Bai2} and associative D-bialgebras \mcite{Z}.

\begin{theorem} {\bf (Aguiar~\mcite{Ag2})} Let $A$ be a $\bfk$-algebra. For a solution
$r=\sum\limits_i a_i\otimes b_i\in A\ot A$ of
Eq.~(\mref{eq:aayb}) in $A$, the map
\begin{equation}
P: A\to A, \;\; P(x)=\sum_i a_ixb_i,\;\;\forall x\in A,
\notag 
\end{equation}
defines a Rota-Baxter operator of weight zero on $A$.
\mlabel{thm:ag}
\end{theorem}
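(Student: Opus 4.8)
The statement to verify is the weight-zero specialization of the Rota--Baxter relation~(\mref{eq:rbo}), namely $P(x)P(y)=P(P(x)y)+P(xP(y))$ for all $x,y\in A$. The plan is to expand each of the three terms using the representation $r=\sum_i a_i\otimes b_i$ and then to recognize the resulting operator identity as the image of the tensor equation~(\mref{eq:aayb}) under a single linear map. Writing $P(z)=\sum_i a_i z b_i$, one finds that $P(x)P(y)$, $P(P(x)y)$ and $P(xP(y))$ are all double sums over $i,j$ of interleaved products of the form $a_\bullet\,x\,b_\bullet\cdots y\cdots b_\bullet$; the claim is precisely an equality among these double sums.

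First I would introduce, for fixed $x,y\in A$, the $\bfk$-linear evaluation map
\begin{equation}
\Phi_{x,y}:A\otimes A\otimes A\to A,\qquad \Phi_{x,y}(u\otimes v\otimes w)=u\,x\,v\,y\,w,
\notag
\end{equation}
which is well defined by the universal property of the tensor product. The key computation is then to apply $\Phi_{x,y}$ to the three products appearing in~(\mref{eq:aayb}). Using the definitions in~(\mref{eq:r12}), a direct componentwise multiplication in $A\otimes A\otimes A$ gives $r_{13}r_{12}=\sum_{i,j}a_ia_j\otimes b_j\otimes b_i$, then $r_{12}r_{23}=\sum_{i,j}a_i\otimes b_ia_j\otimes b_j$, and finally $r_{23}r_{13}=\sum_{i,j}a_j\otimes a_i\otimes b_ib_j$.

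Applying $\Phi_{x,y}$ to each of these produces $\sum_{i,j}a_ia_j\,x\,b_j\,y\,b_i$, then $\sum_{i,j}a_i\,x\,b_ia_j\,y\,b_j$, and $\sum_{i,j}a_j\,x\,a_i\,y\,b_ib_j$ respectively. Since $r$ solves~(\mref{eq:aayb}), the element $r_{13}r_{12}-r_{12}r_{23}+r_{23}r_{13}$ is zero in $A\otimes A\otimes A$, so by linearity its image under $\Phi_{x,y}$ vanishes in $A$. After relabeling the dummy indices $i\leftrightarrow j$ in the first and third sums, the middle term is exactly $P(x)P(y)$ while the other two become $P(P(x)y)$ and $P(xP(y))$; the vanishing of the alternating combination is then literally the desired Rota--Baxter identity of weight zero.

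The computation itself is mechanical once the evaluation map is chosen, so the only real content is the choice of $\Phi_{x,y}$ with $x$ and $y$ inserted in precisely the two ``gaps'' created by the three tensor slots. I expect the points requiring care to be purely bookkeeping: tracking which factor lands in which slot after the multiplication in $A^{\otimes 3}$, checking that the sign of the middle term $-r_{12}r_{23}$ is what moves $P(x)P(y)$ to the left-hand side, and confirming that $\Phi_{x,y}$ is independent of the chosen expansion $r=\sum_i a_i\otimes b_i$, which is automatic from multilinearity. I do not anticipate any genuine obstacle beyond matching the six terms correctly.
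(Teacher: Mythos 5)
Your proposal is correct and is essentially identical to the paper's own argument: the paper expands $r_{13}r_{12}-r_{12}r_{23}+r_{23}r_{13}$ in $A\otimes A\otimes A$ as $\sum_{i,j}a_ia_j\otimes b_j\otimes b_i-\sum_{i,j}a_i\otimes b_ia_j\otimes b_j+\sum_{i,j}a_j\otimes a_i\otimes b_ib_j$ and then ``replaces the tensor symbols by $x$ and $y$,'' which is precisely your evaluation map $\Phi_{x,y}(u\otimes v\otimes w)=u\,x\,v\,y\,w$. Your term-by-term identifications $\Phi_{x,y}(r_{13}r_{12})=P(P(x)y)$, $\Phi_{x,y}(r_{12}r_{23})=P(x)P(y)$, $\Phi_{x,y}(r_{23}r_{13})=P(xP(y))$ and the sign bookkeeping all match the paper's computation; you have merely made the implicit linear map explicit.
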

The theorem is obtained by replacing the tensor symbols in
 \begin{equation}
r_{13}r_{12}-r_{12}r_{23}+r_{23}r_{13} =\sum_{i,j}a_ia_j\otimes
b_j\otimes b_i-\sum_{i,j}a_i\otimes b_ia_j\otimes
b_j+\sum_{i,j}a_j\otimes a_i\otimes b_ib_j=0, \notag
\end{equation}
by $x$ and $y$ in $A$.

\subsection{$\calo$-operators}
In this paper, we introduce the concept of an \tto
$\calo$-operator as a generalization of the concept of a
Rota-Baxter operator and the associative analogue of an
$\calo$-operator on a Lie algebra. We then extend the connections
of Rota-Baxter algebras with associative Yang-Baxter equations to
those of $\calo$-operators. This study is motivated by the
relationship between $\calo$-operator and the classical Yang-Baxter
equation in Lie algebras~\mcite{Bai1,Bo,Ku}

Let $(A,\spr)$ be a $\bfk$-algebra. Let $(V,\ell,r)$ be an
$A$-bimodule, consisting of a compatible pair of a left $A$-module
$(V,\ell)$ given by $\ell: A\to \End_\bfk(V)$ and a right $A$-module
$(V,r)$ given by $r:A\to \End_\bfk(V)$ (see Section~\mref{ss:preliminaries} for
the precise definition). Fix a $\kappa\in \bfk$. A pair
$(\alpha,\beta)$ of linear maps $\alpha, \beta:V\to A$ is called
an {\bf \tto $\calo$-operator with \bop $\beta$ of \bwt $\kappa$}  if
\begin{equation}
\begin{array}{l}
\kappa \ell(\beta(u))v = \kappa u r(\beta(v)),\\
\alpha(u)\apr\alpha(v)-\alpha(l(\alpha(u))v+ur(\alpha(v)))
=\kappa \beta(u)\apr\beta(v),\;\;\forall u,v\in V.
\end{array}
\notag 
\end{equation}
When $\beta=0$ or $\kappa=0$,  we obtain the concept of an {\bf $\calo$-operator
$\aop$} satisfying
\begin{equation}
\aop(u)\apr
\aop(v)=\aop(\ell(\aop(u))v)+\aop(ur(\aop(v))),\;\;\forall u,v\in V.
\mlabel{eq:aop0}
\end{equation}
When $V$ is taken to be the $A$-bimodule
$(A,L,R)$ where $L,R: A\to \End_\bfk(A)$ are given by the left and right multiplications, an $\calo$-operator $\aop: V\to A$ of weight zero is just a Rota-Baxter
operator of weight zero.
To illustrate the close relationship between $
\calo$-operators and solutions of the AYBE~(\mref{eq:aybe}), we give the following reformulation of a part
of Corollary~\mref{co:aybea}. See Section~\mref{sec:aybe} for general
cases.

Let $\bfk$ be a field whose characteristic is not 2. Let $A$ be a $\bfk$-algebra that we for now assume to have finite dimension over $\bfk$. Let $\sigma: A\ot A \to A\ot A, a\ot b \mapsto b\ot a,$ be the switch
operator and let $t:\Hom_\bfk(A^*,A)\to \Hom_\bfk(A^*,A)$ be the transpose
operator. Then the natural bijection
$$ \phi: A\ot A \to \Hom_\bfk(A^*,\bfk)\ot A\to \Hom_\bfk(A^*,A)$$
is compatible with the operators $\sigma$ and $t$. Let
$\text{Sym}^2(A\ot A)$ and $\text{Alt}^2(A\ot A)$ (resp.
$\Hom_\bfk(A^*,A)_+$ and $\Hom_\bfk(A^*,A)_-$) be the eigenspaces for the
eigenvalues $1$ and $-1$ of $\sigma$ on $A\ot A$ (resp. of $t$ on
$\Hom_\bfk(A^*,A)$). Thus we have the commutative diagram of bijective
linear maps:
\begin{equation}
\xymatrix{ A\ot A\  \ar^{\phi}@{>->>}[r] \ar@{>->>}[d] & \Hom_\bfk(A^*,A) \ar@{>->>}[d]  \\
\text{Alt}^2 (A\ot A) \oplus \text{Sym}^2(A\ot A)\ \ar^{\phi}@{>->>}[r] & \Hom_\bfk(A^*,A)_- \oplus \Hom_\bfk(A^*,A)_+
}
\mlabel{eq:corr}
\end{equation}
preserving the factorizations. Let $\Hom_{bim}(A^*,A)_+$ be the
subset of $\Hom_\bfk(A^*,A)_+$ consisting of $A$-bimodule homomorphisms
from $A^*$ to $A$ both of which are equipped with the natural
$A$-bimodule structures. Denote $\text{Sym}^2_{bim}(A\ot
A):=\phi^{-1}(\Hom_{bim}(A^*,A)_+)\subseteq \text{Sym}^2(A\ot A)$.
Then we have (Corollary~\mref{co:aybea})

\begin{theorem}
An element $r=(r_-,r_+)\in \text{Alt}^2 (A\ot A) \oplus
\text{Sym}^2_{bim}(A\ot A)$ is a solution of the AYBE~(\mref{eq:aybe}) if and only
if the pair $\phi(r)=(\phi(r)_-, \phi(r)_+)=(\phi(r_-),\phi(r_+))$
is an \tto $\calo$-operator with \bop $\phi(r_+)$ of \bwt
$\kappa=-1$. In particular, when $r_+$ is zero, an element
$r=(r_-,0)=r_-\in \text{Alt}^2 (A\ot A) $ is a solution of the
AYBE if and only if the pair $\phi(r)=(\phi(r)_-, 0)=\phi(r_-)$ is
an $\calo$-operator of weight zero given by Eq.~$($\mref{eq:aop0}$)$
when $(V,\ell,r)$ is the dual bimodule $(A^*,R^*,L^*)$ of $(A,L,R)$. \mlabel{thm:corr}
\end{theorem}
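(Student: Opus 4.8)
The plan is to use the finite-dimensionality of $A$ to convert the tensor equation~(\mref{eq:aybe}) into an equivalent operator identity, and then to match that identity term by term against the two defining relations of an \tto $\calo$-operator. Write $r=r_-+r_+$, set $\alpha=\phi(r_-)$ and $\beta=\phi(r_+)$, and put $\rho=\phi(r)=\alpha+\beta$; concretely $\rho(\xi)=\sum_i\xi(a_i)b_i$ for $r=\sum_i a_i\ot b_i$ and $\xi\in A^*$. Because $\phi$ intertwines $\sigma$ and $t$ as in diagram~(\mref{eq:corr}), the hypotheses $r_-\in\text{Alt}^2(A\ot A)$ and $r_+\in\text{Sym}^2_{bim}(A\ot A)$ translate into $\alpha^*=-\alpha$, $\beta^*=\beta$, and $\beta\in\Hom_{bim}(A^*,A)_+$. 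Here the relevant bimodule is the dual $(A^*,R^*,L^*)$, so that $(R^*(a)\xi)(x)=\xi(xa)$ and $(L^*(a)\xi)(x)=\xi(ax)$. Since $A$ is finite dimensional, the $3$-tensor in~(\mref{eq:aybe}) vanishes if and only if its contraction on the first two legs against every pair $\xi,\eta\in A^*$ vanishes, so it suffices to analyze this contraction.

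The key computation, which I expect to be routine but which is the technical heart, is the master identity obtained by expanding $r_{12}r_{13}+r_{13}r_{23}-r_{23}r_{12}$ and contracting the first two legs against $\xi$ and $\eta$:
\[
(\xi\ot\eta\ot\id)\big(r_{12}r_{13}+r_{13}r_{23}-r_{23}r_{12}\big)=\rho(\xi)\rho(\eta)+\rho\big(L^*(\rho^*(\eta))\xi\big)-\rho\big(R^*(\rho(\xi))\eta\big).
\]
This follows by inserting $\rho(\xi)=\sum_i\xi(a_i)b_i$, using the formulas for $R^*$ and $L^*$ above, and recognizing $\sum_i\eta(b_i)a_i$ as $\rho^*(\eta)$. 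Thus~(\mref{eq:aybe}) holds if and only if the right-hand side vanishes for all $\xi,\eta\in A^*$.

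It then remains to substitute $\rho=\alpha+\beta$ and $\rho^*=-\alpha+\beta$ into the master identity and collect terms. First I would record that the symmetry of $\beta$ together with its being an $A$-bimodule homomorphism forces $\eta(x\beta(\xi))=\xi(\beta(\eta)x)$, i.e. $R^*(\beta(\xi))\eta=L^*(\beta(\eta))\xi$; this is exactly the first defining relation of an \tto $\calo$-operator of \bwt $\kappa=-1$, so that relation holds automatically under the hypothesis. Using it, the two cross terms $\alpha(L^*(\beta(\eta))\xi)-\alpha(R^*(\beta(\xi))\eta)$ cancel; and using the bimodule-homomorphism relations $\beta(R^*(a)\zeta)=a\beta(\zeta)$ and $\beta(L^*(a)\zeta)=\beta(\zeta)a$, the remaining cross terms $\alpha(\xi)\beta(\eta)+\beta(\xi)\alpha(\eta)-\beta(L^*(\alpha(\eta))\xi)-\beta(R^*(\alpha(\xi))\eta)$ telescope to $0$, while the pure-$\beta$ surplus $\beta(L^*(\beta(\eta))\xi)-\beta(R^*(\beta(\xi))\eta)$ also collapses to $0$. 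What survives is exactly
\[
\alpha(\xi)\alpha(\eta)-\alpha\big(R^*(\alpha(\xi))\eta+L^*(\alpha(\eta))\xi\big)+\beta(\xi)\beta(\eta),
\]
so that vanishing of the master identity is equivalent to the second defining relation of an \tto $\calo$-operator with \bop $\beta$ of \bwt $\kappa=-1$ on $(A^*,R^*,L^*)$. Setting $\beta=0$ recovers the final assertion, namely the $\calo$-operator equation~(\mref{eq:aop0}) for the dual bimodule.

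The main obstacle I anticipate is not any single conceptual step but the bookkeeping: keeping the two module actions $R^*,L^*$, the transpose $\rho^*$, and the signs coming from $\kappa=-1$ mutually consistent throughout, and confirming that it is the stronger hypothesis $r_+\in\text{Sym}^2_{bim}$, rather than merely $r_+\in\text{Sym}^2$, that both makes the first defining relation free and drives the cross-term cancellation. A secondary point to verify carefully is the finite-dimensionality input, which is used both to make $\phi$ in~(\mref{eq:corr}) a bijection and to justify reducing the tensor equation to its contractions.
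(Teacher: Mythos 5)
Your proposal is correct and follows essentially the same route as the paper: your ``master identity'' is precisely the contraction computation in the proof of Theorem~\ref{thm:aybea}(\ref{it:aybr}), your observation that symmetry plus the bimodule-homomorphism property forces the balanced relation is Lemma~\ref{le:syin}, and your substitution $\rho=\alpha+\beta$, $\rho^*=-\alpha+\beta$ with the resulting cross-term cancellations reproduces the computation in Theorem~\ref{thm:aybea}(\ref{it:aybea}) at $\kappa=-1$ (where the \MAYBE mass $\frac{\kappa+1}{4}$ vanishes, so the \MAYBE reduces to the AYBE), yielding Corollary~\ref{co:aybea}. The only cosmetic difference is that the paper partly factors the argument through the associativity machinery of Theorem~\ref{thm:ansatz} via Corollaries~\ref{co:an1} and~\ref{co:abas}, whereas you verify the equivalence directly; both are sound.
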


Let $\calm\calo(A^*,A)$ denote the set of \tto $\calo$-operators $(\alpha,\beta)$ from $A^*$ to $A$ of \bwt $\kappa=-1$.
Let $\calo(A^*,A)$ denote the set of $\calo$-operators $\alpha:A^*\to A$ of weight $0$.
Let $\text{AYB}(A)$ denote the set of solutions of the AYBE~(\mref{eq:aybe}) in $A$.
Let $\text{SAYB}(A)$ denote the set of skew-symmetric solutions of the AYBE~(\mref{eq:aybe}) in $A$.
Then Theorem~\mref{thm:corr} means that the bijection in Eq.~(\mref{eq:corr}) restricts to bijections in the following commutative diagram.

{\small
\begin{equation}
\xymatrix{ \text{Alt}^2 (A\ot A) \oplus \text{Sym}^2_{bim}(A\ot A) \ar^{\phi}@{>->>}[r] & \Hom_\bfk(A^*,A)_- \oplus \Hom_{bim}(A^*,A)_+  \\
\text{AYB}(A) \bigcap\! \Big(\text{Alt}^2 (A\ot\!\! A) \oplus
\text{Sym}^2_{bim}(A\ot\! A)\Big)
\ar^{\phi}@{>->>}[r] \ar@{^{(}->}[u] & \calm\calo(A^*\!,A) \bigcap\! \Big(\Hom_\bfk(A^*\!,A)_- \oplus \Hom_{bim}(A^*\!,A)_+\Big) \ar@{^{(}->}[u] \\
\text{SAYB}(A) \ar@{^{(}->}[u] \ar^{\phi}@{>->>}[r] &
\calo(A^*,A)\bigcap \Hom_\bfk(A^*,A)_-  \ar@{^{(}->}[u] }
\notag
\end{equation}
}

\subsection{Layout of the paper}

In Section~\mref{sec:moop}, the
concept of an \tto $\calo$-operator is introduced and its connection
with the associativity of certain products is studied.
Section~\mref{sec:aybe} establishes the relationship of \tto
$\calo$-operators with associative and \tte associative Yang-Baxter equations.
Section~\mref{sec:maybe} introduces the concept of the \gyb associative Yang-Baxter equation
(\GAYBE) and considers its relationship with \tto $\calo$-operators.

\medskip

\noindent {\bf Acknowledgements: }
The first author would like to thank the support by the National Natural Science
Foundation of China (10621101), NKBRPC (2006CB805905) and SRFDP
(200800550015). The second author thanks the NSF grant DMS 0505445
for support and thanks the Chern Institute of Mathematics at Nankai
University for hospitality.

\section{$\calo$-operators and \tto $\calo$-operators}
\mlabel{sec:moop} We give background notations in Section~\mref{ss:preliminaries} before introducing the concept of an \tto $\calo$-operator in Section~\mref{ss:moop}. We then show in Section~\mref{ss:oass} and \mref{ss:rb} that \tto $\calo$-operators can be characterized by the associativity of a multiplication derived from this operator.

\subsection{Bimodules, $A$-bimodule $\bfk$-algebras and matched pairs of algebras}
\mlabel{ss:preliminaries}
We first recall the concept of a bimodule.
\begin{defn}
Let $(A,\apr)$ be a $\bfk$-algebra.
\begin{enumerate}
\item
An {\bf $A$-bimodule} is a $\bfk$-module $V$, together with linear
maps $\ell, r:A\rightarrow \End_\bfk(V)$, such that $(V,\ell)$
defines a left $A$-module, $(V,r)$ defines a right $A$-module and
the two module structures on $V$ are compatible in the sense that
\begin{equation}
(\ell(x)v)r(y)=\ell(x)(vr(y)),\; \forall\; x,y\in A, v\in V.
\notag 
\end{equation}
If we want to be more precise, we also denote an $A$-bimodule $V$ by
the triple $(V,\ell,r)$.
\item
A homomorphism between two $A$-bimodules $(V_1,\ell_1,r_1)$ and $(V_2,\ell_2,r_2)$ is a $\bfk$-linear map $g: V_1\to V_2$ such that
\begin{equation}
g(\ell_1(x)v)=\ell_2(x) g(v),\quad g(vr_1(x))=g(v)r_2(x), \quad \forall\, x\in A, v\in V_1.
\notag 
\end{equation}
\end{enumerate}
\mlabel{de:bim}
\end{defn}

For a $k$-algebra $A$ and $x\in A$, define the left and right actions
$$L(x): A\to A,\ L(x)y=xy\,; \quad R(x): A\to A,\  yR(x)=yx, \quad y\in A.$$
Further define
$$L=L_A:A\rightarrow \End_\bfk(A),\ x\mapsto L(x); \quad
R=R_A:A\rightarrow \End_\bfk(A),\ x\mapsto R(x), \quad x\in A.$$
Obviously, $(A,L,R)$ is an $A$-bimodule.

For a $\bfk$-module $V$, let $V^*:=\Hom_\bfk(V,\bfk)$ denote the
dual $\bfk$-module. Denote the usual pairing between $V^*$ and $V$
by
\begin{equation}
\langle\ ,\ \rangle: V^* \times V \to \bfk, \langle u^*, v \rangle
= u^*(v), \; \forall u^*\in V^*, v\in V.
\notag 
\end{equation}

\begin{prop} $($\mcite{Bai2}$)$ Let $A$ be a $\bfk$-algebra
and let $(V,\ell,r)$ be an $A$-bimodule. Define the linear maps
$\ell^*,r^*:A\rightarrow \End_\bfk(V^*)$ by
\begin{equation}
\langle u^*\ell^*(x),v\rangle=\langle u^*,\ell(x)v\rangle,\; \langle
r^*(x)u^*,v\rangle=\langle u^*,vr(x)\rangle,\;\forall x\in A, u^*\in
V^*, v\in V, \mlabel{eq:dual}
\end{equation}
respectively. Then $(V^*,r^*,\ell^*)$ is an $A$-bimodule, called the dual bimodule of $(V,\ell,r)$.
\mlabel{pp:dual}
\end{prop}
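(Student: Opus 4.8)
The plan is to verify directly that the three defining conditions of a dual bimodule hold: that $(V^*,r^*)$ is a left $A$-module, that $(V^*,\ell^*)$ is a right $A$-module, and that the two actions are compatible in the sense of Definition~\ref{de:bim}(i). Each of these will be checked by pairing against an arbitrary $v\in V$ and reducing to the corresponding module axiom for the original bimodule $(V,\ell,r)$, using the nondegeneracy of the pairing $\langle\ ,\ \rangle$ to conclude the identity of elements of $V^*$ from the identity of their pairings against all $v$.

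First I would treat the left module axiom for $r^*$. Writing out $\langle r^*(xy)u^*,v\rangle$ via Eq.~(\ref{eq:dual}) gives $\langle u^*, v\,r(xy)\rangle$. Since $(V,r)$ is a right $A$-module we have $v\,r(xy)=(v\,r(x))r(y)$, and applying the definition of $r^*$ twice should reassemble this as $\langle r^*(x)(r^*(y)u^*), v\rangle$ or $\langle r^*(y)(r^*(x)u^*),v\rangle$ depending on the order convention. The key point requiring care is the bookkeeping of the order reversal: because $r^*$ is defined so as to act on $V^*$ turning the right action $r$ into a left action, one must confirm that the composite satisfies $r^*(xy)=r^*(x)r^*(y)$ as a left action, i.e. that the right-module identity $v\,r(xy)=(v\,r(x))r(y)$ translates correctly after dualizing. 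Symmetrically, the right module axiom for $\ell^*$ follows from the left module axiom $\ell(xy)v=\ell(x)(\ell(y)v)$ for $(V,\ell)$, again pairing against $v$ and invoking Eq.~(\ref{eq:dual}).

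The last and most substantive step is the compatibility condition, which for the triple $(V^*,r^*,\ell^*)$ reads $(r^*(x)u^*)\ell^*(y)=r^*(x)(u^*\ell^*(y))$ for all $x,y\in A$ and $u^*\in V^*$. I would pair both sides against an arbitrary $v\in V$, unfold each occurrence of $r^*$ and $\ell^*$ using Eq.~(\ref{eq:dual}), and reduce the whole identity to the single statement $\langle u^*, (\ell(y)v)r(x)\rangle = \langle u^*, \ell(y)(v\,r(x))\rangle$, which is precisely the compatibility axiom for the original bimodule $(V,\ell,r)$. The main obstacle here is purely notational rather than conceptual: one must be scrupulous about which variable the dualized operators act on and in what order, since the roles of left and right are interchanged under dualization, and a sign- or side-error in tracking the pairing conventions of Eq.~(\ref{eq:dual}) would silently swap $x$ and $y$. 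Once the three pairings are matched, nondegeneracy of $\langle\ ,\ \rangle$ (valid since we may test against all $v\in V$) upgrades each pairing identity to an identity in $V^*$, completing the proof that $(V^*,r^*,\ell^*)$ is an $A$-bimodule.
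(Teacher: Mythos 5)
Your proof is correct; the paper itself offers no argument here (the proposition is quoted from \cite{Bai2}), and your direct verification --- unwinding each axiom through the pairing and reducing to the corresponding axiom of $(V,\ell,r)$ --- is the standard and essentially unique route, with the order-reversal bookkeeping working out exactly as you anticipate: $\langle r^*(x\apr y)u^*,v\rangle=\langle u^*,(vr(x))r(y)\rangle=\langle r^*(x)(r^*(y)u^*),v\rangle$, so the right action $r$ dualizes to a genuine left action, and the compatibility of $(V^*,r^*,\ell^*)$ reduces to $(\ell(y)v)r(x)=\ell(y)(vr(x))$ as you say. One cosmetic remark: you do not need any nondegeneracy hypothesis --- two elements of $V^*$ agreeing when paired against every $v\in V$ are equal by the very definition of $V^*=\Hom_\bfk(V,\bfk)$, so the final ``upgrade'' step is automatic even when $V$ is infinite-dimensional.
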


Let $(A^*,R^*,L^*)$ denote the dual $A$-bimodule of the $A$-bimodule
$(A,L,R)$.

\medskip

We next extend the concept of a bimodule to that of an
$A$-bimodule algebra by replacing the $\bfk$-module $V$ by a
$\bfk$-algebra $R$.

\begin{defn}{\rm
Let $(A,\apr)$ be a $\bfk$-algebra with multiplication $\apr$ and
let $(R,\rpr)$ be a $\bfk$-algebra with multiplication $\rpr$. Let
$\ell, r:A\rightarrow \End_\bfk(R)$ be two linear maps. We call $R$
(or the triple $(R,\ell,r)$ or the quadruple $(R,\rpr,\ell,r)$) an
{\bf $A$-bimodule $\bfk$-algebra} if $(R,\ell,r)$ is an $A$-bimodule
that is compatible with the multiplication $\rpr$ on $R$. More
precisely, we have
\begin{eqnarray}
\ell(x\apr y)v&=&\ell(x)(\ell(y)v),\;\ell(x)(v\rpr w)=(\ell(x)v)\rpr
w,
\mlabel{eq:twoalg1}\\
vr(x\apr y)&=&(vr(x))r(y),\; (v\rpr w)r(x)=v\rpr (wr(x)),\;
\mlabel{eq:twoalg2}\\
(\ell(x)v)r(y)&=&\ell(x)(vr(y)),\; (vr(x))\rpr w=v\rpr
(\ell(x)w),\; \forall\; x,y\in A, v,w\in R. \mlabel{eq:twoalg3}
\end{eqnarray}
}
\mlabel{de:bimal}
\end{defn}

Obviously, for any $\bfk$-algebra $(A,\apr)$, $(A,\apr, L,R)$ is
an $A$-bimodule $\bfk$-algebra.
Note that an $A$-bimodule $\bfk$-algebra $R$ need not be a left or right
$A$-algebra since we do not assume that $A\cdot 1$ is in the center of $R$.
For example, the $A$-bimodule $\bfk$-algebra $(A,L,R)$ is an $A$-algebra if and only if $A$ is a commutative ring.

The concept of an $A$-bimodule $\bfk$-algebra can be further
generalized to that of a matched pair introduced in~\mcite{Bai2}.
\begin{defn}
A {\bf matched pair} is a pair of associative algebras $(A,\apr)$
and $(B,\rpr)$ together with linear maps $\ell_A,r_A:A\rightarrow
\End_\bfk(B)$ and $\ell_B,r_B:B\rightarrow \End_\bfk(A)$ such that
$(B,\ell_A,r_A)$ is an $A$-bimodule and $(A,\ell_B,r_B)$ is a
$B$-bimodule, satisfying the following conditions:
{\allowdisplaybreaks
\begin{eqnarray}
\ell_A(x)(a\rpr b)&=& \ell_A(x\,r_B(a))b+(\ell_A(x)a)\rpr b;\mlabel{eq:2.4}\\
(a\rpr b)\,r_A(x)&=& a\,r_A(\ell_B(b)x)+a\rpr (b\,r_A(x));\mlabel{eq:2.5}\\
\ell_B(a)(x\apr y)&=&\ell_B(a\,r_A(x))y+(\ell_B(a)x)\apr y;\mlabel{eq:2.6}\\
(x\apr y)\,r_B(a)&=&x\,r_B(\ell_A(y)a)+x\apr (y\,r_B(a));\mlabel{eq:2.7}\\
\ell_A(\ell_B(a)x)b+(a\,r_A(x))\rpr b&=&a\,r_A(x\,r_B(b))+a\rpr
(\ell_A(x)b)=0;\mlabel{eq:2.8}\\
\ell_B(\ell_A(x)a)y+(x\,r_B(a))\apr y&=&x\,r_B(a\,r_A(y))+x\apr
(\ell_B(a)y)=0,\mlabel{eq:2.9}
\end{eqnarray}
}
for any $x,y\in A,a,b\in B$. \mlabel{de:mp}
\end{defn}
Matching pairs are naturally related to algebraic structures on a
direct sum of algebras.
\begin{theorem}$($\mcite{Bai2}$)$  Let $(A,B,\ell_A,r_A,\ell_B,r_B)$ be a matched pair.  Then there is
an algebra structure on the vector space $A\oplus B$ given by
\begin{equation}
(x+a)*(y+b)=(x\,\apr\, y+\ell_B(a)y+x\,r_B(b))+(a\rpr
b+\ell_A(x)b+a\,r_A(y)),\;\;\forall x,y\in A,a,b\in B. \mlabel{eq:2.10}
\end{equation}
We denote this associative algebra by
$A\bowtie^{\ell_A,r_A}_{\ell_B,r_B}B$ or simply $A\bowtie B$. On the other
hand, every associative algebra which is the direct sum (as vector
spaces) of two subalgebras can be obtained in this way.
\mlabel{thm:mp}
\end{theorem}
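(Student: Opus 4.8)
The plan is to prove the two assertions separately, both hinging on a single observation: after expanding the product $*$ by its defining formula~(\mref{eq:2.10}), associativity of $*$ is \emph{equivalent} to the two bimodule axioms together with the compatibility relations~(\mref{eq:2.4})--(\mref{eq:2.9}). So essentially the same computation serves both directions, run forwards for the first claim and backwards for the second.

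For the first assertion I would note that the associator $(e_1*e_2)*e_3-e_1*(e_2*e_3)$ is additive in each $e_i$, so it suffices to verify associativity when each of the three arguments lies in $A$ or in $B$, giving $2^3=8$ cases. The two homogeneous cases (all three in $A$, resp.\ all three in $B$) reduce at once to associativity of $(A,\apr)$, resp.\ of $(B,\rpr)$. For each of the six mixed cases I would expand both sides with~(\mref{eq:2.10}) and compare the $A$-component and the $B$-component separately. The point is that these projected identities are precisely the module and bimodule-compatibility axioms for $(B,\ell_A,r_A)$ and $(A,\ell_B,r_B)$, together with the equalities recorded in~(\mref{eq:2.4})--(\mref{eq:2.9}). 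Concretely: arguments $(x,y,c)$ with $x,y\in A$, $c\in B$ give $\ell_A(x\apr y)=\ell_A(x)\ell_A(y)$ on the $B$-side and~(\mref{eq:2.7}) on the $A$-side; arguments $(x,b,c)$ with $x\in A$, $b,c\in B$ give the right-module axiom for $(A,\ell_B,r_B)$ and~(\mref{eq:2.4}); arguments $(x,b,z)$ with $x,z\in A$, $b\in B$ give the bimodule-compatibility of $(B,\ell_A,r_A)$ together with the equality of the two members of~(\mref{eq:2.9}). The remaining three mixed cases are obtained by the evident symmetry $A\leftrightarrow B$ (with $\apr\leftrightarrow\rpr$, $\ell_A\leftrightarrow\ell_B$, $r_A\leftrightarrow r_B$) and yield the symmetric axioms together with~(\mref{eq:2.5}),~(\mref{eq:2.6}), and~(\mref{eq:2.8}). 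Since all of these hold for a matched pair by hypothesis, $*$ is associative.

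For the converse, suppose $E$ is associative and decomposes as a vector-space direct sum $E=A\oplus B$ of two subalgebras, with projections $\pi_A,\pi_B$. I would take $\apr,\rpr$ to be the inherited multiplications and define the four action maps by splitting the mixed products: for $x\in A$, $b\in B$ set $x\,r_B(b):=\pi_A(xb)$, $\ell_A(x)b:=\pi_B(xb)$, $\ell_B(b)x:=\pi_A(bx)$, and $b\,r_A(x):=\pi_B(bx)$, extended bilinearly. With these definitions the reconstructed product~(\mref{eq:2.10}) agrees term by term with the multiplication of $E$ (for instance $x*b=\pi_A(xb)+\pi_B(xb)=xb$, and similarly $b*x=bx$, $x*y=xy$, $a*b=ab$), so $*$ \emph{is} the product of $E$. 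It then remains to read the first part in reverse: projecting the (valid) associativity identities of $E$ onto $A$ and $B$ shows that $(B,\ell_A,r_A)$ and $(A,\ell_B,r_B)$ are bimodules and that~(\mref{eq:2.4})--(\mref{eq:2.9}) hold, so $(A,B,\ell_A,r_A,\ell_B,r_B)$ is a matched pair and $E\cong A\bowtie B$.

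The substantial but routine part is the bookkeeping across the six mixed cases; the one place that genuinely needs care is the converse, where one must check that the maps defined by $\pi_A,\pi_B$ really are module \emph{actions} and not merely that the isolated relations hold — this is exactly the content extracted from the associativity of $E$ on triples of type $(A,A,B)$ and $(A,B,A)$. (I would also flag that what the associator enforces in the analogues of~(\mref{eq:2.8}) and~(\mref{eq:2.9}) is the equality of their two members; the two expressions need not vanish, as one sees already for $E=M_2(\bfk)$ split into the two subalgebras $\langle e_{11},e_{12}\rangle$ and $\langle e_{21},e_{22}\rangle$.)
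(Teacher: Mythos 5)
Your argument is correct and is essentially the paper's own proof: expanding the associator of $*$ by trilinearity into the eight homogeneous and mixed cases recovers exactly associativity of $(A,\apr)$ and $(B,\rpr)$, the bimodule axioms for $(B,\ell_A,r_A)$ and $(A,\ell_B,r_B)$, and Eqs.~(\mref{eq:2.4})--(\mref{eq:2.9}), while the converse is obtained, as in the paper, by splitting the mixed products of $E=A\oplus B$ via the projections $\pi_A,\pi_B$ and rereading the same computation backwards. Your closing caveat is also justified: the trailing ``$=0$'' in Eqs.~(\mref{eq:2.8}) and (\mref{eq:2.9}) is a misprint --- associativity forces only the equality of the two members, not their vanishing, as your $M_2(\bfk)$ example correctly shows.
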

\delete{
\begin{proof}
For any $x,y,z\in A$ and $a,b,c\in B$, we have
\begin{eqnarray*}
&&[(x+a)*(y+b)]*(z+c)-(x+a)*[(y+b)*(z+c)]\\
&=&[(x\apr y)\apr z-x\apr (y\apr z)]+ [(x*b)*c-x*(b\rpr
c)]+[(a*y)*c-a*(y*c)]\\
&&+[(x\apr y)*c-x*(y*c)]+[(x*b)*z-x*(b*z)]+[(a\rpr b)\rpr
c-a\rpr
(b\rpr c)]\\
&&+[(a*y)*z-a*(y\apr z)]+[(a\rpr b)*z-a*(b*z)]
\end{eqnarray*}
Therefore, Eq.~(\mref{eq:2.10}) defines a $\bfk$-algebra
structure on $A\oplus B$ if and only if the following equations are
satisfied:
\begin{eqnarray*}
(x\apr y)\apr z=x\apr (y\apr z)\;&\Longleftrightarrow&\;\;A\;{\rm is}\;{\rm an}\;{\rm associative}\;{\rm algebra};\\
(x\apr y)*c=x*(y*c)\;&\Longleftrightarrow&\;\;\ell_A(x\apr y)c=\ell_A(x)\ell_A(y)c\;{\rm and}\;{\rm equation}\; Eq.~(\mref{eq:2.7})\;{\rm holds};\\
(a*y)*z=a*(y\apr z)\;&\Longleftrightarrow&\;\; r_A(y\apr
z)a=r_A(z)r_A(y)a \;{\rm and}\;{\rm equation}\; Eq.~(\mref{eq:2.6})\;{\rm holds}\\
(x*b)*z=x*(b*z) \;&\Longleftrightarrow&\;\; r_A(z)\ell_A(x)b
=\ell_A(x)r_A(z)b\;{\rm and}\;{\rm equation}\; Eq.~(\mref{eq:2.9})\;{\rm holds}\\
(a\rpr b)\rpr c=a\rpr (b\rpr c)\;&\Longleftrightarrow&\;\;B\;{\rm is}\;{\rm an}\;{\rm associative}\;{\rm algebra};\\
(a\rpr b)*z=a*(b*z)\;&\Longleftrightarrow&\;\;\ell_B(a\rpr b)z=\ell_B(a)\ell_B(b)z\;{\rm and}\;{\rm equation}\; Eq.~(\mref{eq:2.5})\;{\rm holds};\\
(x*b)*c=x*(b\rpr c)\;&\Longleftrightarrow&\;\; r_B(b\rpr
c)x=r_B(c)r_B(b)x \;{\rm and}\;{\rm equation}\; Eq.~(\mref{eq:2.4})\;{\rm holds}\\
(a*y)*c=a*(y*c) \;&\Longleftrightarrow&\;\; r_B(c)\ell_B(a)y
=\ell_B(a)r_B(c)y\;\text{ and Eq.~(\mref{eq:2.8}) holds}.
\end{eqnarray*}
Hence $A\bowtie B$ is a $\bfk$-algebra if and only if
$(\ell_A,r_A)$ is a bimodule of $A$ and $(\ell_B,r_B)$ is a bimodule of
$B$ and Eqs~(\mref{eq:2.4})-(\mref{eq:2.9}) hold. On the other hand,
if $A$ and $B$ are associative subalgebras of a $\bfk$-algebra
$C$ such that $C=A\oplus B$ as vector spaces, then it is easy to
show that the linear maps $\ell_A,r_A:A\rightarrow \End_\bfk(B)$ and
$\ell_B,r_B:B\rightarrow \End_\bfk(B)$ determined by
$$x*a=\ell_A(x)a+r_B(a)x,\;\;a*x=\ell_B(a)x+r_A(x)a,\;\;\forall x\in A,a\in B$$
satisfy equations Eqs.~(\mref{eq:2.4})-(\mref{eq:2.9}). In addition,
$(\ell_A,r_A)$ is a bimodule of $A$ and $(\ell_B,r_B)$ is a bimodule of
$B$.
 \end{proof}
} It is clear that an $A$-bimodule $\bfk$-algebra $(R,\ell,r)$ is
just a matched pair $(A,R, \ell,r,0,0)$. In fact, in this case,
Eq.~(\mref{eq:2.4}) is the second equation in
Eq.~(\mref{eq:twoalg1}), Eq.~(\mref{eq:2.5}) is the second
equation in Eq.~(\mref{eq:twoalg2}) and
Eq.~(\mref{eq:2.8}) is the second equation in
Eq.~(\mref{eq:twoalg3}). Equations (\mref{eq:2.6}),
(\mref{eq:2.7}) and (\mref{eq:2.9}) hold automatically.

In turn, an $A$-bimodule $(R,\ell,r)$ is the special case of an
$A$-bimodule $\bfk$-algebra when the multiplication
$\rpr$ on $R$ is the zero product. As a direct corollary of
Theorem~\mref{thm:mp}, we obtain the following result which is a
generalization of the classical result~\mcite{Sc} between bimodule
structures on $V$ and semi-direct product algebraic structures on
$A\oplus V$.

\begin{coro} If $(R,\rpr, \ell,r)$ is an $A$-bimodule $\bfk$-algebra, then the direct sum $A\oplus R$ of
vector spaces is turned into a $\bfk$-algebra (the semidirect
sum) by defining multiplication in $A\oplus R$ by
\begin{equation}
(x_1,v_1)\sempr(x_2,v_2)=(x_1\spr x_2,
\ell(x_1)v_2+v_1r(x_2)+v_1\rpr v_2),\;\;\forall x_1,x_2\in A,
v_1,v_2\in R.
\notag 
\end{equation}
\mlabel{co:twoalg}
\end{coro}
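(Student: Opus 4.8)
The plan is to obtain this statement as an immediate specialization of Theorem~\mref{thm:mp}, exploiting the identification---recorded in the paragraph just before the corollary---that an $A$-bimodule $\bfk$-algebra $(R,\rpr,\ell,r)$ is precisely the matched pair $(A,R,\ell,r,0,0)$, i.e.\ the matched pair in which the two $B$-actions on $A$ vanish: $\ell_B=r_B=0$.

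First I would confirm that the hypotheses of Theorem~\mref{thm:mp} are met. Setting $\ell_B=r_B=0$ in the matched-pair axioms~(\mref{eq:2.4})--(\mref{eq:2.9}), axiom~(\mref{eq:2.4}) reduces to the second equation of~(\mref{eq:twoalg1}), axiom~(\mref{eq:2.5}) to the second equation of~(\mref{eq:twoalg2}), and axiom~(\mref{eq:2.8}) to the second equation of~(\mref{eq:twoalg3}); the remaining axioms~(\mref{eq:2.6}), (\mref{eq:2.7}) and~(\mref{eq:2.9}) collapse to $0=0$, since every one of their terms carries a factor $\ell_B$ or $r_B$. Hence every $A$-bimodule $\bfk$-algebra satisfies all the conditions of Definition~\mref{de:mp}, so $(A,R,\ell,r,0,0)$ is indeed a matched pair.

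Next I would apply Theorem~\mref{thm:mp} with $B=R$, $\ell_A=\ell$, $r_A=r$ and $\ell_B=r_B=0$ to conclude that the vector space $A\oplus R$ carries an associative $\bfk$-algebra structure under the product~(\mref{eq:2.10}). Substituting $\ell_B=r_B=0$ into~(\mref{eq:2.10}) and writing $x+a$ as the pair $(x,a)$, the contribution $\ell_B(a)y+x\,r_B(b)$ to the $A$-component disappears, and the product reads
\begin{equation}
(x_1,v_1)\sempr(x_2,v_2)=(x_1\spr x_2,\ \ell(x_1)v_2+v_1r(x_2)+v_1\rpr v_2),
\notag
\end{equation}
which is exactly the asserted multiplication.

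Because the dictionary between the matched-pair axioms and the bimodule-$\bfk$-algebra axioms has essentially been written out already, there is no real obstacle here; the only point needing care is the index bookkeeping in the specialization. Should one prefer a self-contained argument, one could instead expand the two associators $\big((x_1,v_1)\sempr(x_2,v_2)\big)\sempr(x_3,v_3)$ and $(x_1,v_1)\sempr\big((x_2,v_2)\sempr(x_3,v_3)\big)$ directly and check that their difference vanishes term-by-term, using associativity of $\spr$ and $\rpr$ together with~(\mref{eq:twoalg1})--(\mref{eq:twoalg3}); but this route is more laborious and yields no further insight.
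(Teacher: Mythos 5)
Your proposal is correct and is essentially the paper's own argument: the paper states this corollary as a direct consequence of Theorem~\ref{thm:mp}, having already recorded in the preceding paragraph the identification of an $A$-bimodule $\bfk$-algebra with the matched pair $(A,R,\ell,r,0,0)$, including the same reduction of axioms~(\ref{eq:2.4}), (\ref{eq:2.5}), (\ref{eq:2.8}) to the second equations of~(\ref{eq:twoalg1})--(\ref{eq:twoalg3}) and the automatic vanishing of~(\ref{eq:2.6}), (\ref{eq:2.7}), (\ref{eq:2.9}). Your specialization of the product formula~(\ref{eq:2.10}) with $\ell_B=r_B=0$ matches the paper exactly.
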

We denote this algebra by $A\ltimes_{\ell,r}R$ or simply $A\ltimes
R$.

\subsection{\Tto $\calo$-operators}
\mlabel{ss:moop} We first define an $\calo$-operator before
introducing an \tto $\calo$-operator through an auxiliary
operator.

\subsubsection{$\calo$-operators}

\begin{defn}{\rm Let $(A,\apr)$ be a $\bfk$-algebra and
$(R,\rpr, \ell,r)$ be an $A$-bimodule $\bfk$-algebra. A linear map
$\aop:R\rightarrow A$ is called an {\bf $\calo$-operator of weight
$\lambda\in {\bfk}$ associated to $(R,\rpr,\ell,r)$} if $\alpha$
satisfies
\begin{equation}
\aop(u)\apr
\aop(v)=\aop(\ell(\aop(u)v))+\aop(ur(\aop(v)))+\lambda\aop(u\rpr
v),\;\;\forall u,v\in V. \mlabel{eq:aop}
\end{equation}
} \notag 
\end{defn}

\begin{remark}{\rm Under our assumption that $\bfk$ is a field, the
non-zero weight can be normalized to weight 1. In fact, for a
non-zero weight $\lambda\in {\bfk}$, if $\alpha$ is an
$\calo$-operator of weight $\lambda$ associated to an $A$-bimodule
$\bfk$-algebra $(R,\rpr,\ell,r)$, then $\alpha$ is an $\calo$-operator of weight 1 associated to $(R,\lambda\rpr,\ell,r)$
and $\frac{1}{\lambda}\alpha$ is an $\calo$-operator of weight 1
associated to $(R,\rpr,\ell,r)$.}
\end{remark}

Note that, an $A$-bimodule $(V,\ell,r)$ becomes an $A$-bimodule
$\bfk$-algebra when $V$ is equipped with the zero multiplication.
Then a linear map $\aop:V\rightarrow A$ is an $\calo$-operator (of
any weight $\lambda$) if
\begin{equation}
\aop(u)\apr
\aop(v)=\aop(\ell(\aop(u))v)+\aop(ur(\aop(v))),\;\;\forall u,v\in V.
\notag 
\end{equation}
Such a structure appeared independently in \cite{Uc} under the name
of generalized Rota-Baxter operator. Since the weight $\lambda$ makes no difference in the definition, we
just call $V$ an $\calo$-operator. This definition recovers the
definition in Eq.~(\mref{eq:aop0}).

Obviously, an $\calo$-operator associated to $(A,L,R)$ is just a
Rota-Baxter operator on $A$. An $\calo$-operator can be viewed as
the relative version of a Rota-Baxter operator in the sense that the domain and range of an $\calo$-operator might be different.

\subsubsection{Balanced homomorphisms}
For our purpose of further generalizing the concept of an $\calo$-operator, we introduce another concept.
\begin{defn}
{\rm
Let $(A,\apr)$ be a $\bfk$-algebra.
\begin{enumerate}
\item
Let $\kappa\in\bfk$ and let $(V,\ell,r)$ be an $A$-bimodule. A linear map (resp. an $A$-bimodule homomorphism) $\beta:V\rightarrow A$ is
called a {\bf balanced linear map of \bwt $\kappa$} (resp. {\bf balanced $A$-bimodule homomorphism of \bwt $\kappa$}) if
\begin{equation}
\kappa \ell(\beta(u))v = \kappa ur(\beta(v)), \quad \forall\, u,v\in V.
\mlabel{eq:ksy}
\end{equation}
\mlabel{it:blm}
\item
Let $\kappa,\mu\in\bfk$ and let $(R, \rpr,\ell,r)$ be an $A$-bimodule $\bfk$-algebra. A linear map (resp. an $A$-bimodule homomorphism)
$\beta:R\rightarrow A$ is called a {\bf balanced linear map of \bwt
($\kappa,\mu$)} (resp. a {\bf balanced $A$-bimodule homomorphism of \bwt $(\kappa,\mu)$}) if Eq.~(\mref{eq:ksy}) holds and
\begin{equation}
\mu \ell(\beta(u\rpr v))w=\mu ur(\beta(v\rpr w)), \quad \forall\,
u,v,w\in R. \mlabel{eq:mueq}
\end{equation}
\end{enumerate}
}
\mlabel{de:co}
\end{defn}

Clearly, if $\kappa=0$ (resp. $\mu=0$), then Eq.~(\mref{eq:ksy})
(resp. Eq.~(\mref{eq:mueq})) imposes no restriction. So any
$A$-bimodule homomorphism is balanced of \bwt $(\kappa,\mu)=(0,0)$.
For a non-zero \bwt, we have the following
examples.

\begin{exam}{\rm Let $A$ be a $\bfk$-algebra.
\begin{enumerate}
\item The identity map $\beta={\rm id}:(A,L,R)\rightarrow A$ is a
balanced $A$-bimodule homomorphism (of any \bwt $(\kappa,\mu)$).
\item Any $A$-bimodule homomorphism $\beta: (A,L,R)\rightarrow A$ is balanced (of any \bwt $(\kappa,\mu)$).
\item Let $r\in A\otimes A$ be symmetric. If $r$ is regarded as a linear
map from $(A^*,R^*,L^*)$ to $A$ is an $A$-bimodule homomorphism,
then $r$ is a balanced $A$-bimodule homomorphism (of any \bwt
$\kappa$). See
Lemma~\mref{le:syin}.
\end{enumerate}}
\end{exam}

\subsubsection{\Tto $\calo$-operators}
We can now introduce our first main concept in this paper.
\begin{defn}
{\rm Let $(A,\apr)$ be a $\bfk$-algebra and let $(R,\rpr, \ell,r)$
be an $A$-bimodule $\bfk$-algebra.
\begin{enumerate}
\item Let $\lambda,\kappa,\mu\in \bfk$. Fix a balanced
$A$-bimodule homomorphism  $\beta:(R,\ell,r)\to A$ of \bwt
$(\kappa,\mu)$. A linear map $\alpha: R\to A$ is called an {\bf
\tto $\calo$-operator of weight $\lambda$ with \bop $\beta$ of
\bwt $(\kappa,\mu)$} if
\begin{equation}
\alpha(u)\apr\alpha(v)-\alpha(\ell(\alpha(u))v+ur(\alpha(v))+\lambda
u\rpr v)=\kappa \beta(u)\apr\beta(v)+\mu\beta(u\rpr v), \quad \forall\, u,v\in R.
\mlabel{eq:gmybe}
\end{equation}
\item We also let $(\alpha,\beta)$ denote an \tto $\calo$-operator
$\alpha$ with \bop $\beta$. \item When $(V,\ell,r)$ is an
$A$-bimodule, we regard $V$ as an $A$-bimodule $\bfk$-algebra with
the zero multiplication. Then $\lambda$ and $\mu$ are irrelevant.
We then call the pair $(\alpha,\beta)$ an {\bf \tto $\calo$-operator
with \bop $\beta$ of \bwt $\kappa$.}
\end{enumerate}
}
\mlabel{de:mop}
\end{defn}
We note that, when the \bop $\beta$ is the zero map (and hence
$\kappa$ and $\mu$ are irrelevant), then $\alpha$ is the
$\calo$-operator defined in Definition 2.7.

\subsection{\Tto $\calo$-operators and associativity}
\mlabel{ss:oass}
Let $(A,\apr)$ be a $\bfk$-algebra and $(R,\rpr, \ell,r)$ be an
$A$-bimodule $\bfk$-algebra. Let $\delta_{\pm}:R\rightarrow A$ be
two linear maps and $\lambda\in\textbf{k}$. We now consider the associativity of the multiplication
\begin{equation}
u\dpr v:=\ell(\delta_{+}(u))v+ur(\delta_{-}(v))+\lambda u\rpr v,\quad
\forall\, u,v\in R,\mlabel{eq:product}
\end{equation}
and several other related multiplications. This will be applied in the Section~\mref{sec:maybe}.

Let the characteristic of the field $\bfk$ be different from 2.  Set
\begin{equation}
\alpha:=(\delta_{+}+\delta_{-})/2,\quad
\beta:=(\delta_{+}-\delta_{-})/2,\mlabel{eq:alphabeta-1}
\end{equation}
called the {\bf symmetrizer} and {\bf antisymmetrizer} of
$\delta_\pm$ respectively. Note that $\delta_\pm$ can be recovered
from $\alpha$ and $\beta$ by $\delta_{\pm}=\alpha\pm\beta$.

\begin{lemma} Let $(A,\apr)$ be a $\bfk$-algebra and
$(R,\rpr, \ell,r)$ be an $A$-bimodule $\bfk$-algebra. Let
$\alpha:R\rightarrow A$ be a linear map and let
$\lambda\in\textbf{k}$. Then the operation given by
\begin{equation}
u*_\alpha v:=\ell(\alpha(u))v+ur(\alpha(v))+\lambda u\rpr v,\;\;\forall u,v\in
R, \end{equation}
is associative if
and only if
\begin{equation}
\ell\big(\alpha(u)\apr\alpha(v)-\alpha(u\ast_\alpha v)\big)w=ur\big(\alpha(v)\apr\alpha(w)-\alpha(v\ast_\alpha w)\big),\mlabel{eq:condition}
\end{equation}
for all $u,v,w\in R$.\mlabel{le:product}
\end{lemma}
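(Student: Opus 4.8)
The plan is to verify the identity by computing the associator of $\ast_\alpha$ directly: I would form $(u\ast_\alpha v)\ast_\alpha w - u\ast_\alpha(v\ast_\alpha w)$, simplify it using the $A$-bimodule $\bfk$-algebra axioms, and check that it reduces to exactly the difference between the two sides of (\mref{eq:condition}). Associativity then holds for all $u,v,w$ if and only if that difference vanishes identically, which is the assertion.

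First I would substitute the definition $u\ast_\alpha v=\ell(\alpha(u))v+ur(\alpha(v))+\lambda u\rpr v$ into each triple product. Expanding $(u\ast_\alpha v)\ast_\alpha w$ yields the single term $\ell(\alpha(u\ast_\alpha v))w$ together with six terms obtained by applying $(-)r(\alpha(w))$ and $\lambda(-)\rpr w$ to each of the three summands of $u\ast_\alpha v$; symmetrically, $u\ast_\alpha(v\ast_\alpha w)$ yields the single term $ur(\alpha(v\ast_\alpha w))$ together with six terms obtained by applying $\ell(\alpha(u))(-)$ and $\lambda u\rpr(-)$ to the three summands of $v\ast_\alpha w$. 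This produces seven terms on each side, fourteen in all in the associator.

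The core of the argument is to match most of these terms in cancelling pairs using the compatibility axioms (\mref{eq:twoalg1})--(\mref{eq:twoalg3}) and the associativity of $\rpr$. The two pure-$\rpr$ terms $\lambda^2(u\rpr v)\rpr w$ and $\lambda^2 u\rpr(v\rpr w)$ cancel by associativity of $\rpr$; the identity $(\ell(\alpha(u))v)r(\alpha(w))=\ell(\alpha(u))(vr(\alpha(w)))$ from (\mref{eq:twoalg3}) cancels the two $\ell$--$r$ mixed terms; $(\ell(\alpha(u))v)\rpr w=\ell(\alpha(u))(v\rpr w)$ from (\mref{eq:twoalg1}) and $(ur(\alpha(v)))\rpr w=u\rpr(\ell(\alpha(v))w)$ from (\mref{eq:twoalg3}) cancel the two $\lambda$-weighted terms involving one action and one $\rpr$; and $(u\rpr v)r(\alpha(w))=u\rpr(vr(\alpha(w)))$ from (\mref{eq:twoalg2}) disposes of the remaining $\lambda$-weighted mixed pair. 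After these five cancellations only four terms survive: $\ell(\alpha(u\ast_\alpha v))w$ and $(ur(\alpha(v)))r(\alpha(w))$ from the first product, and $\ell(\alpha(u))(\ell(\alpha(v))w)$ and $ur(\alpha(v\ast_\alpha w))$ from the second.

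To finish I would rewrite these four survivors using the \emph{first} identities of (\mref{eq:twoalg1}) and (\mref{eq:twoalg2}): $(ur(\alpha(v)))r(\alpha(w))=ur(\alpha(v)\apr\alpha(w))$ and $\ell(\alpha(u))(\ell(\alpha(v))w)=\ell(\alpha(u)\apr\alpha(v))w$. Collecting the four terms, the associator becomes $ur\big(\alpha(v)\apr\alpha(w)-\alpha(v\ast_\alpha w)\big)-\ell\big(\alpha(u)\apr\alpha(v)-\alpha(u\ast_\alpha v)\big)w$, which is precisely the right-hand side minus the left-hand side of (\mref{eq:condition}). Hence the associator vanishes for all $u,v,w\in R$ exactly when (\mref{eq:condition}) holds, proving the equivalence. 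I do not expect a conceptual obstacle here; the only real difficulty is the bookkeeping of fourteen terms, so I would organize them by which of $\ell,r,\rpr$ occurs and by the power of $\lambda$ before pairing them, to be sure no term is dropped or matched against the wrong axiom.
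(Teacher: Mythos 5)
Your proof is correct and follows essentially the same route as the paper's: expand both triple products into seven terms each, cancel five pairs via the axioms (\mref{eq:twoalg1})--(\mref{eq:twoalg3}) and associativity of $\rpr$, and rewrite the four surviving terms so the associator equals the difference of the two sides of (\mref{eq:condition}). Your term-by-term bookkeeping matches the paper's computation exactly (you even correctly use $\apr$ in $ur(\alpha(v)\apr\alpha(w))$ and $\ell(\alpha(u)\apr\alpha(v))w$, where the paper's displayed formulas contain a slip writing $\rpr$), so there is nothing to add.
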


\begin{proof}
For any $u,v,w\in R$, we have
\begin{eqnarray}
(u\ast_\alpha v)\ast_\alpha w&=& \ell(\alpha(u\ast_\alpha v))w + (u\ast_\alpha v)r(\alpha(w)) + \lambda (u\ast_\alpha v) \rpr w
\notag \\
&=&\ell(\alpha(u\ast_\alpha v))w +(\ell(\alpha(u))v)r(\alpha(w))+(ur(\alpha(v)))r(\alpha(w))
\mlabel{eq:assl}\\
& &+\lambda(u\rpr v)r(\alpha(w))+\lambda(\ell(\alpha(u))v)\rpr
w+\lambda(ur(\alpha(v)))\rpr w+\lambda^2(u\rpr v)\rpr w.
\notag
\end{eqnarray}
and
\begin{eqnarray}
u\ast_\alpha(v\ast_\alpha w)&=& \ell(\alpha(u))(v\ast_\alpha w) + u r(\alpha(v\ast_\alpha w)) + \lambda u\rpr (v\ast_\alpha w)\notag\\
&=&\ell(\alpha(u))(\ell(\alpha(v))w)+\ell(\alpha(u))(vr(\alpha(w)))+\lambda
\ell(\alpha(u))(v\rpr w)\mlabel{eq:assr}\\
&& +u r(\alpha(v\ast_\alpha w))+\lambda
u\rpr(\ell(\alpha(v))w)+\lambda
u\rpr(vr(\alpha(w)))+\lambda^2u\rpr(v\rpr w).\notag
\end{eqnarray}
Since $(R,\rpr,\ell,r)$ is an $A$-bimodule $\bfk$-algebra, the second,
fourth, fifth, sixth and seventh term on the right hand side of
Eq.~(\mref{eq:assl}) agrees with the second, sixth, third, fifth and
seventh term on the right hand side of Eq.~(\mref{eq:assr})
respectively. We further have
$(ur(\alpha(v))r(\alpha(w))=ur(\alpha(v)\rpr \alpha(w))$ and
$\ell(\alpha(u))(\ell(\alpha(v)) w)=\ell(\alpha(u)\rpr \alpha(v))w$.
We thus have
$$(u\ast_\alpha
v)\ast_\alpha w-u\ast_\alpha(v\ast_\alpha w)=
ur\big(\alpha(v)\apr\alpha(w)-\alpha(v\ast_\alpha w)\big)
- \ell\big(\alpha(u)\apr\alpha(v)-\alpha(u\ast_\alpha v)\big)w.$$
Then the lemma follows.
\end{proof}

\begin{coro} Let $\bfk$ be a field of characteristic not equal to 2. Let $(A,\apr)$ be a $\bfk$-algebra and
$(R,\rpr, \ell,r)$ be an $A$-bimodule $\bfk$-algebra. Let
$\delta_{\pm}:R\rightarrow A$ be two linear maps and
$\lambda\in\textbf{k}$. Let $\alpha$ and $\beta$ be their
symmetrizer and antisymmetrizer defined by
Eq.~$($\mref{eq:alphabeta-1}$)$. If $\beta$ is a balanced linear map of
\bwt $\kappa=1$, namely
\begin{equation}
\ell(\beta(u))v=ur(\beta(v)),\;\; \forall u,v\in R, \mlabel{eq:sy}
\end{equation}
then the operation $\dpr$ in
Eq.~$($\mref{eq:product}$)$ defines an associative product on $R$ if and
only if $\alpha$ satisfies
Eq.~$($\mref{eq:condition}$)$.\mlabel{co:product}
\end{coro}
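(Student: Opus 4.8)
The plan is to show that, under the balanced hypothesis of \bwt $\kappa=1$, the product $\dpr$ built from the pair $\delta_\pm$ coincides \emph{exactly} with the product $\ast_\alpha$ built from the symmetrizer $\alpha$. Once the two operations are seen to be literally the same binary operation on $R$, the corollary drops out of Lemma~\mref{le:product} with no further work.

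First I would substitute the relations $\delta_{+}=\alpha+\beta$ and $\delta_{-}=\alpha-\beta$ (the inverse of Eq.~(\mref{eq:alphabeta-1})) into the definition of $\dpr$ in Eq.~(\mref{eq:product}). Using that $\ell$ and $r$ are linear maps into $\End_\bfk(R)$ and that the two module actions are $\bfk$-linear, the term $\ell(\delta_{+}(u))v$ splits as $\ell(\alpha(u))v+\ell(\beta(u))v$ and the term $ur(\delta_{-}(v))$ splits as $ur(\alpha(v))-ur(\beta(v))$. Collecting these, $u\dpr v$ equals the $\alpha$-part $\ell(\alpha(u))v+ur(\alpha(v))+\lambda u\rpr v$ plus the residual $\beta$-part $\ell(\beta(u))v-ur(\beta(v))$.

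The crux, which I expect to be the only point needing care, is the sign bookkeeping that makes this residual $\beta$-part cancel rather than double: because $\delta_{-}$ contributes $-\beta$ on the right, the $\beta$-terms enter with opposite signs, giving precisely $\ell(\beta(u))v-ur(\beta(v))$. The balanced condition of \bwt $\kappa=1$, Eq.~(\mref{eq:sy}), asserts exactly $\ell(\beta(u))v=ur(\beta(v))$ for all $u,v\in R$, so this difference vanishes identically. Hence $u\dpr v=\ell(\alpha(u))v+ur(\alpha(v))+\lambda u\rpr v=u\ast_\alpha v$ for all $u,v\in R$, i.e. $\dpr$ and $\ast_\alpha$ are the same operation.

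Finally, since the two operations agree, $\dpr$ is associative if and only if $\ast_\alpha$ is. Invoking Lemma~\mref{le:product}, associativity of $\ast_\alpha$ is equivalent to Eq.~(\mref{eq:condition}), which gives the stated equivalence and completes the argument.
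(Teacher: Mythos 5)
Your proposal is correct and is precisely the paper's own argument: the balanced condition $\ell(\beta(u))v=ur(\beta(v))$ makes the residual $\beta$-terms in $u\dpr v=\ell((\alpha+\beta)(u))v+ur((\alpha-\beta)(v))+\lambda u\rpr v$ cancel, so $\dpr$ coincides with $\ast_\alpha$ and Lemma~\ref{le:product} applies directly. Your write-up simply makes explicit the sign bookkeeping that the paper's one-line reduction leaves implicit.
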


\begin{proof}
Since $\beta$ is balanced, for any $u,v\in R$
$$u\dpr v=\ell(\delta_{+}(u))v+ur(\delta_{-}(v))+\lambda u\rpr
v=\ell(\alpha(u))v+ur(\alpha(v))+\lambda u\rpr v.$$ Then the
conclusion follows from Lemma~\mref{le:product}.
\end{proof}

Obviously, if $\alpha$ is an $\calo$-operator of weight $\lambda$
associated to an $A$-bimodule $\bfk$-algebra $(R,\rpr, \ell,r)$, then
Eq.~(\mref{eq:condition}) holds. Thus the operation on $R$ defined by
Eq.~(\mref{eq:product}) is associative.

\begin{theorem}
Let $\bfk$ have characteristic not equal to 2. Let $(A,\apr)$ be a $\bfk$-algebra and $(R,\rpr, \ell,r)$ be an
$A$-bimodule $\bfk$-algebra. Let $\delta_{\pm}:R\rightarrow A$ be
two linear maps and $\lambda\in\textbf{k}$. Let $\alpha$ and $\beta$
be the symmetrizer and antisymmetrizer of $\delta_\pm$.
\begin{enumerate}
\item
Suppose that $\beta$ is a balanced linear map of \bwt $(\kappa,\mu)$
and $\alpha$ satisfies Eq.~$($\mref{eq:gmybe}$)$. Then the product
$\ast_\alpha$ is associative. \mlabel{it:an1}
\item
Suppose $\beta$ is a balanced $A$-bimodule homomorphism of \bwt
$(-1,\pm \lambda)$, that is, $\beta$ satisfies Eq.~$($\mref{eq:ksy}$)$
with $\kappa=-1$, Eq.~$($\mref{eq:mueq}$)$ with $\mu=\pm\lambda$ and
\begin{equation}
\beta(\ell(x)u)=x\apr\beta(u),\quad \beta(ur(x))=\beta(u)\apr x,\quad
\forall x\in A,u\in R.\mlabel{eq:bimoho}
\end{equation}
Then $\alpha$ is an \tto $\calo$-operator of weight $\lambda$ with
\bop $\beta$ of \bwt $(\kappa,\mu)=(-1,\pm \lambda)$ if and only
if $\delta_{\pm}$ is an $\calo$-operator of weight 1 associated to
a new $A$-bimodule $\bfk$-algebra $(R,\rprpm_{\pm}, \ell,r)$:
\begin{equation}
\delta_{\pm}(u)\apr\delta_{\pm}(v) =\delta_{\pm}(\ell(\delta_{\pm}(u))v+ur(\delta_{\pm}(v)) +u\rprpm_{\pm}v),\quad
\forall u,v\in R,
\mlabel{eq:newo}
\end{equation}
where the associative products $\rprpm_{\pm}=\rprpm_{\lambda,\beta,\pm}$ on $R$ are
defined by
\begin{equation}
u\rprpm_{\pm}v=\lambda u\rpr v\mp2\ell(\beta(u))v,\quad \forall
u,v\in R.\mlabel{eq:diamond}
\end{equation}
\mlabel{it:an2}
\end{enumerate}
\mlabel{thm:ansatz}
\end{theorem}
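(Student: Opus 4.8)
The plan is to handle the two parts separately, in each case reducing to either the associativity criterion of Lemma~\mref{le:product} or the defining relation Eq.~(\mref{eq:gmybe}), and then discharging the leftover identities using the balanced and homomorphism hypotheses on $\beta$.

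For part~(\mref{it:an1}) I would first note that $\alpha(u)\apr\alpha(v)-\alpha(u\ast_\alpha v)$ is exactly the left-hand side of Eq.~(\mref{eq:gmybe}), so the hypothesis reads $\alpha(u)\apr\alpha(v)-\alpha(u\ast_\alpha v)=\kappa\beta(u)\apr\beta(v)+\mu\beta(u\rpr v)$. Substituting this into both sides of the associativity condition Eq.~(\mref{eq:condition}) splits the task into matching the $\mu$-terms and the $\kappa$-terms. The $\mu$-terms $\mu\ell(\beta(u\rpr v))w$ and $\mu ur(\beta(v\rpr w))$ agree at once by Eq.~(\mref{eq:mueq}). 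For the $\kappa$-terms I would run the chain $\kappa\ell(\beta(u)\apr\beta(v))w=\kappa\ell(\beta(u))(\ell(\beta(v))w)=\kappa\ell(\beta(u))(vr(\beta(w)))=\kappa(\ell(\beta(u))v)r(\beta(w))=\kappa(ur(\beta(v)))r(\beta(w))=\kappa ur(\beta(v)\apr\beta(w))$, where the outer steps use the module axioms Eqs.~(\mref{eq:twoalg1}) and (\mref{eq:twoalg2}), the middle step uses the compatibility Eq.~(\mref{eq:twoalg3}), and the two applications of the balanced identity Eq.~(\mref{eq:ksy}) effect the replacements $\ell(\beta(v))w\mapsto vr(\beta(w))$ and $\ell(\beta(u))v\mapsto ur(\beta(v))$. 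Then Eq.~(\mref{eq:condition}) holds and Lemma~\mref{le:product} closes part~(\mref{it:an1}).

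For part~(\mref{it:an2}) the one genuinely idea-bearing step concerns the argument of $\delta_{\pm}$ on the right-hand side of Eq.~(\mref{eq:newo}). Writing $\delta_{\pm}=\alpha\pm\beta$ and expanding $\ell(\delta_{\pm}(u))v+ur(\delta_{\pm}(v))+u\rprpm_{\pm}v$, the three $\beta$-contributions are $\pm\ell(\beta(u))v\pm ur(\beta(v))$ together with the modification term $\mp2\ell(\beta(u))v$ coming from Eq.~(\mref{eq:diamond}); using Eq.~(\mref{eq:ksy}) with $\kappa=-1$ (so that $ur(\beta(v))=\ell(\beta(u))v$) these collapse to $(\pm1\pm1\mp2)\ell(\beta(u))v=0$. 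Thus the coefficient $\mp2$ is engineered precisely to annihilate the $\beta$-terms, and the argument reduces to $u\ast_\alpha v$, so the right-hand side of Eq.~(\mref{eq:newo}) equals $\delta_{\pm}(u\ast_\alpha v)=\alpha(u\ast_\alpha v)\pm\beta(u\ast_\alpha v)$. I would then expand $\beta(u\ast_\alpha v)$ by the homomorphism property Eq.~(\mref{eq:bimoho}) as $\alpha(u)\apr\beta(v)+\beta(u)\apr\alpha(v)+\lambda\beta(u\rpr v)$, expand the left-hand side $\delta_{\pm}(u)\apr\delta_{\pm}(v)=(\alpha(u)\pm\beta(u))\apr(\alpha(v)\pm\beta(v))$, and cancel the common cross terms $\pm\alpha(u)\apr\beta(v)\pm\beta(u)\apr\alpha(v)$. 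What survives is $\alpha(u)\apr\alpha(v)-\alpha(u\ast_\alpha v)=-\beta(u)\apr\beta(v)\pm\lambda\beta(u\rpr v)$, which is exactly Eq.~(\mref{eq:gmybe}) with $(\kappa,\mu)=(-1,\pm\lambda)$; since every manipulation is reversible, this gives the stated equivalence. I would also verify separately that $(R,\rprpm_{\pm},\ell,r)$ is an $A$-bimodule $\bfk$-algebra by checking associativity of $\rprpm_{\pm}$ and the three compatibility axioms Eqs.~(\mref{eq:twoalg1})--(\mref{eq:twoalg3}); each of these, after substituting Eq.~(\mref{eq:diamond}) and using Eq.~(\mref{eq:bimoho}) to rewrite $\beta$ of a module action, reduces to an identity already valid in $(R,\rpr,\ell,r)$ together with Eq.~(\mref{eq:ksy}).

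The main obstacle is organizational rather than conceptual: keeping the $\pm/\mp$ signs consistent throughout, and confirming associativity of $\rprpm_{\pm}$. There the associator difference reduces (in the top-sign case, the other being analogous) to $2\lambda\big(u\rpr(\ell(\beta(v))w)-\ell(\beta(u\rpr v))w\big)$, which vanishes because two applications of Eq.~(\mref{eq:ksy}) with $\kappa=-1$, interleaved with Eq.~(\mref{eq:twoalg2}), turn $u\rpr(\ell(\beta(v))w)$ into $(u\rpr v)r(\beta(w))=\ell(\beta(u\rpr v))w$. The same chain shows that the weight-$\mu$ balanced condition Eq.~(\mref{eq:mueq}) is automatically available once $\kappa=-1$, so that part of the hypothesis imposes no additional work.
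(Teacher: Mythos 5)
Your proposal is correct and takes essentially the same route as the paper's own proof: part~(i) is the paper's reduction to Eq.~(\mref{eq:condition}) via Lemma~\mref{le:product}, with the $\kappa$-terms and $\mu$-terms discharged by Eqs.~(\mref{eq:ksy})--(\mref{eq:mueq}) and the bimodule-algebra axioms (your explicit five-step chain just spells out the paper's one-line appeal to these), and part~(ii) is the paper's direct expansion of $\delta_{\pm}=\alpha\pm\beta$ in Eq.~(\mref{eq:newo}) using Eqs.~(\mref{eq:ksy}) and (\mref{eq:bimoho}), including the same verification that $(R,\rprpm_{\pm},\ell,r)$ is an $A$-bimodule $\bfk$-algebra, with your chain through $(u\rpr v)r(\beta(w))$ matching the paper's glossed ``third equality.'' Your side remark that Eq.~(\mref{eq:mueq}) with $\mu=\pm\lambda$ becomes automatic once Eq.~(\mref{eq:ksy}) holds with $\kappa=-1$ is accurate and merely makes explicit a redundancy the paper leaves implicit.
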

Note that in Item~(\mref{it:an1}) we do not assume that $\beta$ is
an $A$-bimodule homomorphism. Thus $\alpha$ needs not be an \tto
$\calo$-operator.

\begin{proof}
(\mref{it:an1}) By the relations (\mref{eq:ksy}) and
(\mref{eq:mueq}) and the fact that $(R,\rpr, \ell,r)$ is an
$A$-bimodule $\bfk$-algebra, we obtain
$$\ell(\kappa \beta(u)\apr\beta(v)+\mu\beta(u\rpr
v))w=ur(\kappa \beta(v)\apr\beta(w)+\mu\beta(v\rpr w)),\;\;\forall
u,v,w\in R.$$ Since Eq.~(\mref{eq:gmybe}) also holds, the above
equation implies Eq.~(\mref{eq:condition}) and hence the associativity of $\ast_\alpha$ by Lemma~\mref{le:product}.
\smallskip

\noindent (\mref{it:an2}) First we prove that the operations
$\rprpm_{\pm}$ defined by Eq.~(\mref{eq:diamond}) make
$(R,\rprpm_\pm, \ell,r)$ into an $A$-bimodule $\bfk$-algebra. In fact,
for any $u,v,w\in R$,
{\allowdisplaybreaks
\begin{eqnarray*}
(u\rprpm_{\pm}v)\rprpm_{\pm}w&=&(\lambda u\rpr
v\mp2l(\beta(u))v)\rprpm_{\pm}w\\
&=&\lambda^2(u\rpr v)\rpr w\mp2\lambda(\ell(\beta(u))v)\rpr
w\mp2\lambda \ell(\beta(u\rpr v))w+4\ell(\beta(\ell(\beta(u))v))w\\
&=&\lambda^2u\rpr(v\rpr w)\mp2\lambda
u\rpr(\ell(\beta(v))w)\mp2\lambda \ell(\beta(u))(v\rpr
w)+4\ell(\beta(u))(\ell(\beta(v))w)\\
&=&u\rprpm_{\pm}(\lambda v\rpr
w\mp2\ell(\beta(v))w)=u\rprpm_{\pm}(v\rprpm_{\pm}w),
\end{eqnarray*}
} where the third equality follows since each term on one side of
the equation equals to the corresponding term on the other side by
our assumptions on $\beta$ and the fact that $(R,\rpr, \ell,r)$ is an
$A$-bimodule $\bfk$-algebra. On the other hand, for any $x\in A$,
{\allowdisplaybreaks
\begin{eqnarray*}
\ell(x)(u\rprpm_{\pm}v)&=&\ell(x)(\lambda u\rpr v\mp2\ell(\beta(u))v)\\
&=&\lambda(\ell(x)u)\rpr v\mp 2\ell(x\apr \beta(u))v\quad ({\rm
by\;Eq.~(\mref{eq:twoalg1})})\\
&=&\lambda(\ell(x)u)\rpr v\mp2\ell(\beta(\ell(x)u))v\quad ({\rm
by\;Eq.~(\mref{eq:bimoho})})\\
&=&(\ell(x)u)\rprpm_{\pm}v.
\end{eqnarray*}
}
By the same argument, we have
$(u\rprpm_{\pm}v)r(x)=u\rprpm_{\pm}(vr(x)).$
Moreover,
\begin{eqnarray*}
(ur(x))\rprpm_{\pm}v&=&\lambda(ur(x))\rpr v\mp2\ell(\beta(ur(x)))v\\
&=&\lambda u\rpr(\ell(x)v)\mp2\ell(\beta(u))(\ell(x)v)\quad {\rm
(by \; Eq.~(\mref{eq:bimoho}),\;
Eq.~(\mref{eq:twoalg1})\; and\;
Eq.~(\mref{eq:twoalg3}))}\\
&=&u\rprpm_{\pm}(\ell(x)v).
\end{eqnarray*}

The other axioms in the Definition~\mref{de:bimal} of an
$A$-bimodule $\bfk$-algebra do not depend on the product of $R$.
Thus $(R,\rprpm_\pm, \ell,r)$ equipped with the product
$\rprpm_{\pm}$ is an $A$-bimodule $\bfk$-algebra. Moreover,
{\allowdisplaybreaks
\begin{eqnarray*}
&
&(\alpha\pm\beta)(u)\apr(\alpha\pm\beta)(v)-
(\alpha\pm\beta)(\ell((\alpha\pm\beta)(u))v+ur((\alpha\pm\beta)(v))+u\rprpm_{\pm}v)\\
&=&\alpha(u)\apr\alpha(v)+\beta(u)\apr\beta(v) -\alpha(\ell(\alpha(u))v+ur(\alpha(v))+\lambda
u\rpr v)\mp\lambda\beta(u\rpr v)\\
&& \pm(\beta(u)\apr\alpha(v)-\beta(ur(\alpha(v))) +\alpha(u)\apr\beta(v)-\beta(\ell(\alpha(u))v))\quad
({\rm by\; Eq.~(\mref{eq:sy})})\\
&=&\alpha(u)\apr\alpha(v)+\beta(u)\apr\beta(v)-\alpha(\ell(\alpha(u))v+ur(\alpha(v))+\lambda
u\rpr v)\mp\lambda\beta(u\rpr v)\ ({\rm by\;
Eq.~(\mref{eq:bimoho})}).
\end{eqnarray*}
}
So $\alpha$ and $\beta$ satisfy Eq.~(\mref{eq:gmybe}) with $\kappa =-1$ and
$\mu=\pm\lambda$ if and only if $\delta_{\pm}$ is an
$\calo$-operator of weight 1 from $(R,\rprpm_\pm, \ell,r)$ to $A$.
\end{proof}

We close this section with an obvious corollary of
Theorem~\mref{thm:ansatz}.

\begin{coro}
Let $A$ be a $\bfk$-algebra and $(V,\ell,r)$ be an $A$-bimodule.
Let $\alpha,\beta:V\rightarrow A$ be two linear maps such that
$\beta$ is a balanced $A$-bimodule homomorphism. Then $\alpha$ is
an \tto $\calo$-operator with \bop $\beta$ of \bwt $\kappa=-1$ if
and only if $\alpha\pm\beta$ is an $\calo$-operator of weight 1
associated to an $A$-bimodule $\bfk$-algebra
$(V,\star_{\pm},\ell,r)$, i.e.,
$$
(\alpha\pm\beta)(u)\apr(\alpha\pm\beta)(v)=(\alpha\pm\beta)(\ell((\alpha\pm\beta)(u))v+
ur((\alpha\pm\beta)(v))+u\star_{\pm}v),\quad \forall u,v\in V,
$$
where the associative algebra products $\star_{\pm}$ on $V$ are
defined by
$$
u\star_{\pm}v=\mp2\ell(\beta(u))v,\quad \forall u,v\in V.
$$
\mlabel{co:an1}
\end{coro}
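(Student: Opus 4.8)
The plan is to read off this statement as the special case of Theorem~\mref{thm:ansatz}(\mref{it:an2}) in which the $A$-bimodule $(V,\ell,r)$ is regarded as an $A$-bimodule $\bfk$-algebra by equipping it with the zero multiplication $\rpr=0$, as prescribed in Definition~\mref{de:mop}(iii). Setting $\delta_{\pm}:=\alpha\pm\beta$, Eq.~(\mref{eq:alphabeta-1}) shows that $\alpha$ and $\beta$ are precisely the symmetrizer and antisymmetrizer of $\delta_\pm$, so the two linear maps of the corollary are exactly the data to which the theorem applies.

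First I would check that the hypotheses of Theorem~\mref{thm:ansatz}(\mref{it:an2}) hold, for any choice of weight $\lambda$. Since $\beta$ is a balanced $A$-bimodule homomorphism, it satisfies Eq.~(\mref{eq:bimoho}) together with $\ell(\beta(u))v=ur(\beta(v))$; the latter is Eq.~(\mref{eq:ksy}) with $\kappa=-1$, since the nonzero factor $\kappa=-1$ cancels and balanced of \bwt $-1$ coincides with Eq.~(\mref{eq:sy}). Moreover Eq.~(\mref{eq:mueq}) with $\mu=\pm\lambda$ holds vacuously: each side carries a factor $u\rpr v$ or $v\rpr w$, which vanishes because $\rpr=0$. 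Thus $\beta$ is a balanced $A$-bimodule homomorphism of \bwt $(-1,\pm\lambda)$, and the theorem is applicable.

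Next I would specialize its conclusion. With $\rpr=0$, the derived products of Eq.~(\mref{eq:diamond}) become
\[
u\rprpm_{\pm}v=\lambda\, u\rpr v\mp 2\ell(\beta(u))v=\mp 2\ell(\beta(u))v=u\star_{\pm}v,
\]
so the $A$-bimodule $\bfk$-algebras $(R,\rprpm_\pm,\ell,r)$ produced by the theorem are exactly $(V,\star_\pm,\ell,r)$. On the other side, the defining equation~(\mref{eq:gmybe}) of an \tto $\calo$-operator, read with $\rpr=0$, $\kappa=-1$, and the now-irrelevant terms $\lambda\,u\rpr v$ and $\mu\beta(u\rpr v)$ dropped, reduces to
\[
\alpha(u)\apr\alpha(v)-\alpha(\ell(\alpha(u))v+ur(\alpha(v)))=-\beta(u)\apr\beta(v),
\]
which is exactly the condition that $\alpha$ be an \tto $\calo$-operator with \bop $\beta$ of \bwt $\kappa=-1$ in the sense of Definition~\mref{de:mop}(iii). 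Theorem~\mref{thm:ansatz}(\mref{it:an2}) then yields the asserted equivalence with $\delta_\pm=\alpha\pm\beta$ being an $\calo$-operator of weight~$1$ associated to $(V,\star_\pm,\ell,r)$.

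There is essentially no obstacle here: all the analytic content lies in Theorem~\mref{thm:ansatz}, and what remains is bookkeeping. The only point requiring a little care is confirming that every term involving the product $\rpr$ of $R$ genuinely disappears when $\rpr=0$---both in the balanced conditions, rendering Eq.~(\mref{eq:mueq}) vacuous, and in the two $\calo$-operator equations---so that the weight $\lambda$ and the second \bwt parameter $\mu$ drop out exactly as anticipated in Definition~\mref{de:mop}(iii).
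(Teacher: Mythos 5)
Your proposal is correct and is exactly the paper's own argument: the paper proves Corollary~\ref{co:an1} in one line by taking $R=V$ with the zero multiplication in Theorem~\ref{thm:ansatz}(\ref{it:an2}), which is precisely your specialization. Your additional bookkeeping---checking that Eq.~(\ref{eq:mueq}) becomes vacuous, that Eq.~(\ref{eq:ksy}) with $\kappa=-1$ reduces to Eq.~(\ref{eq:sy}), and that the products $\rprpm_{\pm}$ collapse to $\star_{\pm}$---is all accurate and simply makes explicit what the paper leaves to the reader.
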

\begin{proof}
The corollary follows by taking $R=V$ in Theorem~\mref{thm:ansatz}.
(\mref{it:an2}) with the zero multiplication.
\end{proof}

\subsection{The case of $\calo$-operators and Rota-Baxter operators}
\mlabel{ss:rb}

Let $(A,\apr)$ be a $\bfk$-algebra. Then $(A,\cdot, L,R)$ is an
$A$-bimodule $\bfk$-algebra. Theorem~\mref{thm:ansatz} can be easily
restated in this case. But we are mostly interested
in the case of $\mu=0$ when Eq.~(\mref{eq:gmybe}) takes the form
\begin{equation}
\aop(x)\apr \aop(y)-\aop(\aop(x)\apr y+x\apr \aop(y)+\lambda
x\apr y)=\kappa \beta(x)\apr\beta(y),\quad \forall x,y\in
A.\mlabel{eq:lambdak}
\end{equation}
We list the following special cases for later reference. When $\lambda=0$,
Eq.~(\mref{eq:lambdak}) gives
\begin{equation}
\aop(x)\apr \aop(y)-\aop(\aop(x)\apr y+x\apr \aop(y))=\kappa
\beta(x)\apr\beta(y),\quad \forall x,y\in A.\mlabel{eq:alphak}
\end{equation}
If in addition, $\beta={\rm id}$, then Eq.~(\mref{eq:alphak}) gives
\begin{equation}
\aop(x)\cdot\aop(y)-\aop (\aop(x)\cdot y+x\cdot\aop(y))=\kappa x\cdot
y,\;\;\forall x,y\in A.
\mlabel{eq:kmyb}
\end{equation}
When furthermore $\kappa =-1$, Eq.~(\mref{eq:kmyb}) becomes
\begin{equation}
\aop(x)\cdot\aop(y)-\aop (\aop(x)\cdot y+x\cdot\aop(y))=-x\cdot
y,\;\;\forall x,y\in A. \mlabel{eq:-1myb}
\end{equation}

By the proof of Lemma~\mref{le:product} and Theorem~\mref{thm:ansatz}, we reach the following conclusion.
\begin{coro}
Let $(A,\apr)$ be a $\bfk$-algebra.  Let
$\alpha,\beta:A\rightarrow A$ be two linear maps and
$\lambda\in\textbf{k}$.
\begin{enumerate}
\item For any $\kappa\in\textbf{k}$, let $\beta$ be balanced of
\bwt $(\kappa,0)$ and let $\alpha$ be an \tto $\calo$-operator of
weight $\lambda$ with \bop $\beta$ of \bwt
$(\kappa,\mu)=(\kappa,0)$, namely $\alpha$ satisfies
Eq.~$($\mref{eq:lambdak}$)$. Then the product $\ast_\alpha$ on $A$ is
associative.

\item
If $\beta$ is an $A$-bimodule homomorphism, then $\alpha$ and $\beta$ satisfy
Eq.~$($\mref{eq:alphak}$)$ for $\kappa=-1$ if
and only if $r_{\pm}=\alpha\pm\beta$ is an $\calo$-operator of
weight 1 associated to a new $A$-bimodule $\bfk$-algebra
$(A,\star_{\pm}, L,R)$:
$$
r_{\pm}(x)\apr r_{\pm}(y)=r_{\pm}(r_{\pm}(x)\apr y+x\apr
r_{\pm}(y)+x\star_{\pm} y),\quad \forall x,y\in A,
$$
where the associative products $\star_{\pm}$ on $A$ are
defined by
\begin{equation}
x\star_{\pm}y=\mp2\beta(x)\apr y,\quad \forall x,y\in A.
\notag
\end{equation}
\end{enumerate}
\mlabel{co:aasso}
\end{coro}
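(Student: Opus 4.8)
The plan is to derive both parts directly by specializing the general results already proved for an arbitrary $A$-bimodule $\bfk$-algebra to the tautological one, $(A,\apr,L,R)$; no new computation should be required. The translation dictionary is $\ell=L$, $r=R$, $\rpr=\apr$, so that for any linear maps $\phi,\psi:A\to A$ we have $\ell(\phi(u))v=\phi(u)\apr v$ and $ur(\psi(v))=u\apr\psi(v)$. Under this dictionary the defining relation Eq.~(\mref{eq:gmybe}) of an \tto $\calo$-operator, taken with $\mu=0$, becomes exactly Eq.~(\mref{eq:lambdak}), while the balancing condition Eq.~(\mref{eq:ksy}) reads $\kappa\beta(x)\apr y=\kappa x\apr\beta(y)$.

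For part~(i) I would simply verify that the hypotheses of Theorem~\mref{thm:ansatz}(\mref{it:an1}) are met here: $\beta$ is balanced of \bwt $(\kappa,0)$ by assumption (so Eq.~(\mref{eq:mueq}) is vacuous), and $\alpha$ satisfies Eq.~(\mref{eq:gmybe}) because Eq.~(\mref{eq:lambdak}) is precisely its specialization to $(A,\apr,L,R)$. That theorem then gives the associativity of $\ast_\alpha$ at once, which is the assertion.

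For part~(ii) I would take $\lambda=0$, so Eq.~(\mref{eq:lambdak}) collapses to Eq.~(\mref{eq:alphak}) and the product $\rprpm_\pm$ of Eq.~(\mref{eq:diamond}) reduces to $u\rprpm_\pm v=\mp 2\ell(\beta(u))v=\mp 2\beta(u)\apr v$, the stated $\star_\pm$. The single point worth recording is that the general statement wants $\beta$ to be a \emph{balanced} $A$-bimodule homomorphism, whereas here only the homomorphism property is assumed; but an $A$-bimodule homomorphism $\beta:(A,L,R)\to A$ is automatically balanced of every \bwt, since $\beta(u)\apr v=\beta(u\apr v)=u\apr\beta(v)$ by the right- and left-module homomorphism identities. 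With this noted I would invoke Corollary~\mref{co:an1} with $V=A$ and $(\ell,r)=(L,R)$ (equivalently Theorem~\mref{thm:ansatz}(\mref{it:an2}) with $(R,\rpr,\ell,r)=(A,\apr,L,R)$, $\lambda=0$, $\kappa=-1$, $\mu=0$): it asserts that $\alpha$ is an \tto $\calo$-operator with \bop $\beta$ of \bwt $\kappa=-1$---which under the dictionary is exactly Eq.~(\mref{eq:alphak}) with $\kappa=-1$---if and only if $r_\pm=\alpha\pm\beta$ is an $\calo$-operator of weight $1$ associated to $(A,\star_\pm,L,R)$, and this is the displayed equivalence.

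The main, and essentially only, obstacle is bookkeeping: correctly matching the abstract data to the concrete relations, namely checking that the products $\rprpm_\pm$ and $\star_\pm$ coincide after setting $\lambda=0$, that $\mu=\pm\lambda$ degenerates to $0$, and that the balanced hypothesis is supplied for free by the homomorphism hypothesis via the one-line identity above. After these identifications everything is a direct reading-off from Theorem~\mref{thm:ansatz} and Corollary~\mref{co:an1}.
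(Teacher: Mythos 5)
Your proposal is correct and matches the paper's own (very terse) proof, which likewise deduces the corollary by specializing Lemma~\ref{le:product} and Theorem~\ref{thm:ansatz} to the tautological $A$-bimodule $\bfk$-algebra $(A,\apr,L,R)$, with part (ii) obtained from Theorem~\ref{thm:ansatz}(\ref{it:an2}) at $\lambda=0$ (equivalently Corollary~\ref{co:an1}). Your explicit observation that a bimodule homomorphism $\beta:(A,L,R)\to A$ is automatically balanced, via $\beta(u)\apr v=\beta(u\apr v)=u\apr\beta(v)$, is exactly the point the paper records in the remark following the corollary, so nothing is missing.
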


\begin{remark}
{\rm  With the notations as above, if $\beta:A\rightarrow A$ is an
$A$-bimodule homomorphism, then $\beta$ is balanced of \bwt
$(\kappa, 0)$. Moreover, in this case, $\beta$ is an {\bf averaging
operator}~\mcite{Ag2,R3}, namely,
\begin{equation}
\beta(x)\apr\beta(y)=\beta(x\apr\beta(y))=\beta(\beta(x)\apr
y),\quad \forall x,y\in A,
\end{equation}
and it is also a {\bf Nijenhuis operator}~\mcite{CGM,E2}, namely,
\begin{equation}
\beta(x)\apr\beta(y)+\beta^2(x\apr
y)=\beta(x\apr\beta(y)+\beta(x)\apr y),\quad \forall x,y\in
A.\notag
\end{equation}}
\end{remark}

Let $(A,\apr)$ be a $\bfk$-algebra and let $(A,\cdot, L,R)$ be the
corresponding $A$-bimodule $\bfk$-algebra. In this case, it is
obvious that $\beta={\rm id}$ satisfies the conditions of
Theorem~\mref{thm:ansatz} and Eq.~(\mref{eq:gmybe}) takes the form
\begin{equation}
\aop(x)\apr \aop(y)-\aop(\aop(x)\apr y+x\apr \aop(y)+\lambda
x\apr y)=\hat{\kappa }x\apr y,\quad \forall x,y\in A,\mlabel{eq:pgmybe}
\end{equation}
where $\hat{\kappa }=\kappa +\mu$. Thus we have the following consequence of Theorem~\mref{thm:ansatz}.
\begin{coro}
Let $\hat{\kappa }=-1\pm\lambda$. Then $\aop:A\rightarrow A$ satisfies
Eq.~$($\mref{eq:pgmybe}$)$ if and only if $\aop\pm1$ is a Rota-Baxter
operator of weight $\lambda\mp2$.
\mlabel{co:mop}
\end{coro}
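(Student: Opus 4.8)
The plan is to specialize Theorem~\ref{thm:ansatz} to the situation $R=A$, $\rpr=\apr$, $(\ell,r)=(L,R)$, and $\beta=\mathrm{id}$. The first task is to verify that $\beta=\mathrm{id}$ satisfies all the hypotheses of Theorem~\ref{thm:ansatz}(\ref{it:an2}) with $\kappa=-1$ and $\mu=\pm\lambda$. Indeed, $\mathrm{id}$ is trivially an $A$-bimodule homomorphism (Eq.~(\ref{eq:bimoho}) reads $\beta(x\apr u)=x\apr\beta(u)$, etc.), and by Example~2.12 it is balanced of any \bwt. So the prior theorem applies directly.

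Next I would identify the new bimodule-algebra products $\rprpm_{\pm}$ that appear in Theorem~\ref{thm:ansatz}. With $\beta=\mathrm{id}$ and $\ell=L$, Eq.~(\ref{eq:diamond}) becomes $u\rprpm_{\pm}v=\lambda u\apr v\mp 2\,\beta(u)\apr v=(\lambda\mp2)\,u\apr v$. In other words, the new multiplication is simply a rescaling of the original product $\apr$ by the scalar $\lambda\mp2$. This is the key simplification: under $\beta=\mathrm{id}$ the twisted product collapses to a scalar multiple of the ambient algebra product.

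Then I would translate the conclusion of Theorem~\ref{thm:ansatz}(\ref{it:an2}) into Rota-Baxter language. That theorem says $\aop$ satisfies Eq.~(\ref{eq:pgmybe}) (which is Eq.~(\ref{eq:gmybe}) in this case, recalling $\hat\kappa=\kappa+\mu=-1\pm\lambda$) if and only if $\delta_{\pm}=\aop\pm\mathrm{id}$ is an $\calo$-operator of weight $1$ associated to $(A,\rprpm_\pm,L,R)$, that is, Eq.~(\ref{eq:newo}) holds with product $\rprpm_{\pm}=(\lambda\mp2)\apr$. By the Remark following Definition~2.9 on normalizing weights, an $\calo$-operator of weight $1$ associated to $(A,(\lambda\mp2)\apr,L,R)$ is precisely an $\calo$-operator of weight $\lambda\mp2$ associated to $(A,\apr,L,R)$; and an $\calo$-operator associated to the regular bimodule $(A,L,R)$ is by definition a Rota-Baxter operator. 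Hence $\aop\pm\mathrm{id}$ being such an $\calo$-operator is exactly the assertion that $\aop\pm1$ is a Rota-Baxter operator of weight $\lambda\mp2$, which is the corollary.

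The only point requiring care — and the step I expect to be the main (though minor) obstacle — is the bookkeeping of the weight under rescaling: one must confirm that passing from the product $\apr$ to $(\lambda\mp2)\apr$ shifts the $\calo$-operator weight from $1$ to $\lambda\mp2$ in the correct direction, matching the sign conventions so that the two cases $\delta_+$ and $\delta_-$ yield weights $\lambda-2$ and $\lambda+2$ consistently with the claimed ``$\lambda\mp2$''. Once the scalar $\lambda\mp2$ is tracked through the Remark's normalization argument, the statement follows immediately, so I would present the proof as a one-line application of Theorem~\ref{thm:ansatz} together with this weight computation.
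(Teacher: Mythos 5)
Your proposal is correct and follows essentially the same route the paper intends: Corollary~\mref{co:mop} is stated there as a direct consequence of Theorem~\mref{thm:ansatz}(\mref{it:an2}) with $R=A$, $\beta=\mathrm{id}$ (which, as the paper notes, obviously satisfies all the hypotheses), the twisted product~(\mref{eq:diamond}) collapsing to $u\rprpm_{\pm}v=(\lambda\mp2)\,u\apr v$, and a weight-one $\calo$-operator associated to $(A,(\lambda\mp2)\apr,L,R)$ being, by linearity of the operator, exactly a Rota-Baxter operator of weight $\lambda\mp2$ on $(A,\apr)$. Your sign bookkeeping ($\mu=\pm\lambda$, $\hat\kappa=\kappa+\mu=-1\pm\lambda$, $\delta_{\pm}=\aop\pm\mathrm{id}$ of weight $\lambda\mp2$) matches the paper's conventions, and the rescaling step needs no appeal to the normalization remark beyond linearity, so it also covers the degenerate case $\lambda=\pm2$.
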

When $\lambda=0$, this fact can be found in~\mcite{E1}. As noted
there, the Lie algebraic version of Eq.~(\mref{eq:pgmybe}) in this case, namely Eq.~(\mref{eq:-1myb}), is
the operator form of the modified classical Yang-Baxter
equation~\mcite{Se}.

\section{\Tto $\calo$-operators and AYBE}
\mlabel{sec:aybe} In this section we study the relationship
between \tto $\calo$-operators and associative Yang-Baxter
equations. We start with introducing various concepts of the
associative Yang-Baxter equation (AYBE) in Section~\mref{ss:aybe}.
We then establish connections between $\calo$-operators in
different generalities and solutions of these variations of AYBE
in different algebras. The relationship between $\calo$-operators
on a $\bfk$-algebra $A$ and solutions of AYBE in $A$ is considered
in Section~\mref{ss:oaybe}. We consider the special case of
Frobenius algebras in Section~\mref{ss:frob}. We finally consider
in Section~\mref{ss:goaybe} the relationship between an \tto
$\calo$-operator and solutions of AYBE and \tte AYBE in an
extension algebra of $A$.

\subsection{\Tto associative Yang-Baxter equations}
\mlabel{ss:aybe}

We define variations of the associative Yang-Baxter equation to be satisfied by two tensors from an algebra. We then study the linear maps from these two tensors in preparation for the relationship between $\calo$-operators and solutions of these associative Yang-Baxter equations.

Let $A$ be a $\bfk$-algebra.
Let $r=\sum_i a_i\ot b_i\in A\ot A$. We continue to use the notations $r_{12}, r_{13}$ and $r_{23}$ defined in Eq.~(\mref{eq:r12}).
We similarly define
\begin{equation}
r_{21}=\sum_{i} b_i\ot a_i\ot 1, \quad
r_{31}=\sum_{i}b_i\ot 1 \ot a_i, \quad r_{32}=\sum_{i}
1\ot b_i\ot a_i. \notag
\end{equation}

Equip $A\ot A\ot A$ with the product of the tensor algebra. In particular,
\begin{equation}
 (a_1\ot a_2\ot a_3) (b_1\ot b_2\ot b_3) = (a_1 b_1)\ot (a_2 b_2) \ot (a_3 b_3), \quad \forall\, a_i,b_i\in A, i=1,2,3.
\notag 
\end{equation}

\begin{defn}
{\rm Fix $\esym\in \bfk$.
\begin{enumerate}
\item
The equation
\begin{equation}
r_{12} r_{13}+r_{13} r_{23}-r_{23} r_{12} =\vep (r_{13}+r_{31}) (r_{23}+r_{32})
\mlabel{eq:type2aybe}
\end{equation}
is called the {\bf \tte associative Yang-Baxter equation of \ewt $\esym$} (or {\bf $\esym$-\MAYBE} in short).
\item
Let $A$ be a $\bfk$-algebra. An element $r\in A\otimes A$ is called a {\bf solution of the $\esym$-\MAYBE in $A$} if it satisfies Eq.~(\mref{eq:type2aybe}).
\end{enumerate}
}
\mlabel{de:ayb2}
\end{defn}

When $\esym=0$ or $r$ is skew-symmetric in the sense that $\sigma(r)=-r$ for the switch operator $\sigma: A\ot A\to A\ot A$ (and hence $r_{13}=-r_{31}$), then the $\esym$-\MAYBE is
the same as the AYBE in Eq.~(\mref{eq:aybe}):
\begin{equation}
r_{12} r_{13}+r_{13} r_{23}-r_{23} r_{12}=0.
\mlabel{eq:aybe2}
\end{equation}

Let $A$ be a $\bfk$-algebra with finite $\bfk$-dimension. For $r\in
A\otimes A$, define a linear map $F_r:A^*\to A$ by \begin{equation}
\langle v,F_r(u)\rangle =\langle u\otimes v,r\rangle,\;\;\forall
u,v\in A^*.
\end{equation}
This defines a bijective linear map $F:A\ot A\to \Hom_\bfk(A^*,A)$ and thus allows us to identify $r$ with $F_r$ which we still denote by $r$ for simplicity of notations.
Similarly define a linear map $r^t:A^*\rightarrow A$ by
\begin{equation}
\langle u,r^t(v)\rangle =\langle r,u\otimes v\rangle.
\end{equation}
Obviously $r$ is symmetric or skew-symmetric in $A\ot A$ if and only if, as a linear map, $r=r^t$
or $r=-r^t$ respectively.
Suppose that the characteristic of $\bfk$ is not 2 and define
\begin{equation}
\alpha=\alpha_r=(r-r^t)/2,\quad \beta=\beta_r=(r+r^t)/2,\mlabel{eq:alphabeta}
\end{equation}
called the {\bf skew-symmetric part} and
the {\bf symmetric part} of $r$ respectively. Then $r=\alpha+\beta$
and $r^t=-\alpha+\beta$.

\begin{lemma}
Let $(A,\apr)$ be a $\bfk$-algebra with finite $\bfk$-dimension. Let $s\in A\otimes A$
be symmetric. Then the following conditions are equivalent.
\begin{enumerate}
\item
$s$ is {\bf invariant}, i.e.,
\begin{equation}
({\rm id}\otimes L(x)-R(x)\otimes{\rm id})s=0,\;\;\forall x\in
A.\mlabel{eq:invariant}
\end{equation}\mlabel{it:sin}
\item
 $s$ regarded as a linear map from
$(A^*,R^*,L^*)$ to $A$ is balanced, i.e.,
\begin{equation}
R^*(s(a^*))b^*=a^*L^*(s(b^*)),\quad \forall a^*,b^*\in
A^*.\mlabel{eq:dualssy}
\end{equation}\mlabel{it:ssy}
\item
$s$ regarded as a linear map from $(A^*,R^*,L^*)$ to $A$ is an
$A$-bimodule homomorphism, i.e.,
\begin{equation}
s(R^*(x)a^*)=x\apr s(a^*),\quad s(a^*L^*(x))=s(a^*)\apr x,\quad
\forall x\in A,a^*\in A^*.\mlabel{eq:dualsabi}
\end{equation}\mlabel{it:sbi}
\end{enumerate} \mlabel{le:syin}
\end{lemma}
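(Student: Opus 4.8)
The plan is to prove the three-way equivalence of Lemma~\mref{le:syin} by establishing a cycle of implications, or more efficiently, by showing that each condition is an unwinding of the same identity once we pass between the tensor picture ($s\in A\ot A$) and the linear-map picture ($s:A^*\to A$) via the pairing. The crucial technical point throughout is that, because $s$ is symmetric, the two pairings $\langle u\ot v, s\rangle$ and $\langle v\ot u,s\rangle$ agree, which is what allows the left and right actions to be swapped as required in conditions~(\mref{it:ssy}) and~(\mref{it:sbi}).

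First I would recall how the dual bimodule acts. Writing $s=\sum_i a_i\ot b_i$, the map $s:A^*\to A$ sends $u\mapsto \sum_i \langle u,a_i\rangle b_i$, and by Proposition~\mref{pp:dual} the bimodule $(A^*,R^*,L^*)$ has actions determined by $\langle R^*(x)u,v\rangle = \langle u, vR(x)\rangle = \langle u, vx\rangle$ and $\langle uL^*(x),v\rangle = \langle u, L(x)v\rangle = \langle u,xv\rangle$. The plan for~(\mref{it:sin})$\Leftrightarrow$(\mref{it:ssy}) is to pair the invariance identity~(\mref{eq:invariant}) against an arbitrary element $a^*\ot b^*\in A^*\ot A^*$: pairing $({\rm id}\ot L(x))s$ against $a^*\ot b^*$ and $(R(x)\ot{\rm id})s$ against $a^*\ot b^*$ should, after applying the definitions of $R^*$ and $L^*$ above, reproduce exactly the two sides of the balancing equation~(\mref{eq:dualssy}), evaluated under a further pairing. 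Since the pairing $A^*\times A\to\bfk$ is nondegenerate (here finite-dimensionality of $A$ is used), the vanishing of~(\mref{eq:invariant}) for all $x$ is equivalent to the scalar identities holding for all $a^*,b^*$, which is precisely~(\mref{eq:dualssy}).

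Next, for~(\mref{it:ssy})$\Leftrightarrow$(\mref{it:sbi}) I would exploit symmetry directly. A balanced symmetric map should automatically be a bimodule homomorphism: the equation~(\mref{eq:dualssy}) says $R^*(s(a^*))b^* = a^*L^*(s(b^*))$ as elements of $A^*$; pairing both sides against an arbitrary $c^*$ (or rather, testing against $A$ through the identification) and using that $s$ is symmetric so that $s=s^t$, one can peel off one of the two module conditions in~(\mref{eq:dualsabi}), and the other follows symmetrically by interchanging the roles of the two tensor slots. Conversely, if $s$ is a bimodule homomorphism then substituting~(\mref{eq:dualsabi}) into the two sides of~(\mref{eq:dualssy}) and again invoking $s=s^t$ collapses both sides to the same expression.

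The main obstacle I anticipate is purely bookkeeping: keeping the four variance conventions ($L,R,L^*,R^*$) consistent while moving an identity in $A$ back and forth across the pairing, since the dual of a \emph{left} action is built from a \emph{right} action and vice versa, and it is easy to misplace which argument of $s$ is being hit. I expect no conceptual difficulty once the symmetry $s=s^t$ is invoked at the right moment; the finite-dimensionality hypothesis enters only to guarantee that $F:A\ot A\to\Hom_\bfk(A^*,A)$ is a bijection and that pairing against all test functionals detects equality. I would organize the writeup as (\mref{it:sin})$\Rightarrow$(\mref{it:ssy})$\Rightarrow$(\mref{it:sbi})$\Rightarrow$(\mref{it:sin}), or else prove the two equivalences separately, whichever keeps the index-chasing shortest.
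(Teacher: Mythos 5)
Your proposal is correct and takes essentially the same approach as the paper: in both, each equivalence is verified by pairing against arbitrary test elements ($x\in A$ and $a^*\otimes b^*\in A^*\otimes A^*$), invoking the symmetry of $s$ at exactly the moment a tensor slot must be swapped, with finite-dimensionality ensuring the pairing detects equality. The only difference is organizational---the paper proves (\mref{it:sin})$\Leftrightarrow$(\mref{it:ssy}) and (\mref{it:sin})$\Leftrightarrow$(\mref{it:sbi}) directly, while you route the second equivalence through (\mref{it:ssy})$\Leftrightarrow$(\mref{it:sbi})---but the underlying computation is identical and your route goes through without difficulty.
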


\begin{proof}
``(\mref{it:sin})$\Leftrightarrow$(\mref{it:ssy})". Since $s\in A\otimes A$ is symmetric, for any $x\in
A, a^*,b^*\in A^*$,
\begin{eqnarray*}
\langle({\rm id}\otimes L(x)-R(x)\otimes{\rm id})s,a^*\otimes
b^*\rangle&=&\langle s,a^*\otimes L^*(x)b^*\rangle-\langle
s,R^*(x)a^*\otimes b^*\rangle\\
&=&\langle x\apr s(a^*),b^*\rangle-\langle a^*,s(b^*)\apr
x\rangle\\
&=&\langle R^*(s(a^*))b^*-a^*L^*(s(b^*)),x\rangle.
\end{eqnarray*}
So $s$ is invariant if and only if $s$ regarded as a linear map from
$(A^*,R^*,L^*)$ to $A$ is balanced.

``(\mref{it:sin})$\Leftrightarrow$(\mref{it:sbi})". For any $x\in
A,a^*,b^*\in A^*$,
\begin{eqnarray*}
\langle({\rm id}\otimes L(x)-R(x)\otimes{\rm id})s,a^*\otimes
b^*\rangle&=&\langle s,a^*\otimes L^*(x)b^*\rangle-\langle
s,R^*(x)a^*\otimes b^*\rangle\\
&=&\langle x\apr s(a^*)-s(R^*(x)a^*),b^*\rangle\\
\langle({\rm id}\otimes L(x)-R(x)\otimes{\rm id})s,a^*\otimes
b^*\rangle&=&\langle s,a^*\otimes L^*(x)b^*\rangle-\langle
s,R^*(x)a^*\otimes b^*\rangle\\
&=&\langle s(L^*(x)b^*)-s(b^*)\apr x,a^*\rangle,
\end{eqnarray*}
by the symmetry of $s\in A\otimes A$. So $s$ is invariant if and
only if $s$ regarded as a linear map from $(A^*,R^*,L^*)$ to $A$ is
an $A$-bimodule homomorphism.
\end{proof}

\begin{remark}
{\rm The invariant condition in Item~(\mref{it:sin}) also arises in the
construction of a coboundary antisymmetric infinitesimal bialgebra
in the sense of~\mcite{Bai2} (see also~\mcite{NBG}). }
\end{remark}

\subsection{\Tto $\calo$-operators and \MAYBE}
\mlabel{ss:oaybe}
We first state the following special case of Corollary~\mref{co:an1}.
\begin{coro}
Let $\bfk$ be a field of characteristic not equal to 2. Let $A$ be a $\bfk$-algebra with finite $\bfk$-dimension and $r\in A\otimes A$.
Let $\alpha$, $\beta$ be defined by Eq.~$($\mref{eq:alphabeta}$)$. Suppose $\beta$ is a balanced $A$-bimodule homomorphism. The following two statements are equivalent.
\begin{enumerate}
\item The map $\alpha$ is an \tto $\calo$-operator with \bop
$\beta$ of \bwt $-1$:
\begin{equation}
\alpha(a^*)\apr
\alpha(b^*)-\alpha(R^*(\alpha(a^*))b^*+a^*L^*(\alpha(b^*)))=-\beta(a^*)\apr\beta(b^*),\quad
\forall a^*,b^*\in A^*.\mlabel{eq:-1maybe}
\end{equation}
\item
The map
 $r$ (resp. $-r^t$) is an $\calo$-operator of weight 1
associated to a new $A$-bimodule $\bfk$-algebra $(A^*,\rpr_+,R^*,L^*)$
$($resp. $(A^*,\rpr_-,R^*,L^*)$ $)$:
\begin{equation}
r(a^*)\apr r(b^*)=r(R^*(r(a^*))b^*+a^*L^*(r(b^*))+a^*\rpr_{+}
b^*),\quad \forall a^*,b^*\in A^*,\mlabel{eq:opweight1}
\end{equation}
$($resp.
\begin{equation}
(-r^t)(a^*)\apr(-r^t)(b^*)=(-r^t)(R^*((-r^t)(a^*))b^*+a^*L^*((-r^t)(b^*))+a^*\rpr_{-}
b^*), \mlabel{eq:topweight1}
\end{equation}
$\forall a^*,b^*\in A^*)$, where the associative algebra products $\rpr_{\pm}$ on $A^*$ are
defined by
\begin{equation}
a^*\rpr_{\pm} b^*=\mp2R^*(\beta(a^*))b^*,\quad \forall a^*,b^*\in
A^*.\mlabel{eq:betaproduct}
\end{equation}
\end{enumerate}
\mlabel{co:abas}
\end{coro}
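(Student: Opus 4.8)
The plan is to obtain this corollary as the specialization of Corollary~\mref{co:an1} to the dual bimodule $(A^*,R^*,L^*)$ of $(A,L,R)$, taking $V=A^*$. First I would settle the bookkeeping. In Corollary~\mref{co:an1} the ambient $A$-bimodule is written $(V,\ell,r)$, so here the role of the left action $\ell$ is played by $R^*$ and the role of the right action (the symbol $r$ in that corollary, not to be confused with the tensor $r\in A\ot A$ viewed as the map $F_r:A^*\to A$) is played by $L^*$. With $\alpha=(r-r^t)/2$ and $\beta=(r+r^t)/2$ as in Eq.~(\mref{eq:alphabeta}), the identities $\alpha+\beta=r$ and $\alpha-\beta=-r^t$ hold by a one-line computation, so the two maps $\alpha\pm\beta$ appearing in Corollary~\mref{co:an1} are exactly $r$ and $-r^t$.

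Second, I would verify that the hypotheses of Corollary~\mref{co:an1} are in force. Its only standing assumption is that $\beta$ be a balanced $A$-bimodule homomorphism, which is precisely what we assume; I would note that by Lemma~\mref{le:syin} this condition is interchangeable with invariance of the symmetric tensor $\beta$, so the assumption is self-consistent for the symmetric part of $r$. Then I would match the auxiliary products: the products $\star_\pm$ produced by Corollary~\mref{co:an1} are $u\star_\pm v=\mp2\,\ell(\beta(u))v$, and substituting $\ell=R^*$, $u=a^*$, $v=b^*$ turns this into $a^*\rpr_\pm b^*=\mp2\,R^*(\beta(a^*))b^*$, which is exactly Eq.~(\mref{eq:betaproduct}) defining the new algebra structures on $A^*$.

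Finally I would translate the two statements under these substitutions. The \tto $\calo$-operator condition of \bwt $-1$ in Corollary~\mref{co:an1} becomes, after replacing $\ell=R^*$, $r=L^*$, $u=a^*$, $v=b^*$, precisely Eq.~(\mref{eq:-1maybe}); and the weight-$1$ $\calo$-operator conditions for $\alpha+\beta=r$ and for $\alpha-\beta=-r^t$ become Eq.~(\mref{eq:opweight1}) and Eq.~(\mref{eq:topweight1}) respectively. The asserted equivalence (i)$\Leftrightarrow$(ii) is then read off directly from the equivalence in Corollary~\mref{co:an1}.

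There is no genuine mathematical obstacle here: the entire content of the statement is the specialization already carried out in Corollary~\mref{co:an1}, and the present corollary merely names the relevant bimodule and records the resulting formulas. The one place demanding care is the notational collision between the tensor $r\in A\ot A$ and the right-action symbol $r$ of the generic bimodule $(V,\ell,r)$; keeping these apart, together with tracking the signs through $\delta_\pm=\alpha\pm\beta$ so that the $+$ case yields $r$ and the $-$ case yields $-r^t$, is what makes the matching unambiguous.
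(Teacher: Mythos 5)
Your proposal is correct and coincides with the paper's own treatment: the paper states Corollary~\mref{co:abas} precisely as the special case of Corollary~\mref{co:an1} for the dual bimodule $(A^*,R^*,L^*)$, with $\alpha+\beta=r$, $\alpha-\beta=-r^t$, and $\star_\pm$ specializing to $\rpr_\pm$ of Eq.~(\mref{eq:betaproduct}), exactly as you spell out. Your extra care with the clash between the tensor $r$ and the right-action symbol $r$, and with the sign bookkeeping in $\delta_\pm=\alpha\pm\beta$, is sound and matches the intended reading.
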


In the theory of integrable systems~\mcite{Ko,Se}, {\bf modified
classical Yang-Baxter equation} is usually referred to (the Lie
algebraic version of) Eq.~(\mref{eq:-1myb}) and
Eq.~(\mref{eq:-1maybe}).

The following theorem establishes a close relationship between
\tto $\calo$-operators on a $\bfk$-algebra $A$ and solutions of the AYBE in
$A$.
\begin{theorem}
Let $\bfk$ be a field of characteristic not equal to 2. Let $A$ be a $\bfk$-algebra with finite $\bfk$-dimension and let $r\in A\otimes A$
which is identified as a linear map from $A^*$ to $A$.
\begin{enumerate}
\item
Then $r$ is a solution of the AYBE in $A$ if and only
if $r$ satisfies
\begin{equation}
r(a^*)\apr r(b^*)=r(R^*(r(a^*))b^*-a^*L^*(r^t(b^*))),\;\;\forall
a^*,b^*\in A^*.\mlabel{eq:aybeform}
\end{equation}
\mlabel{it:aybr}
\item
Define
$\alpha$ and $\beta$ by Eq.~$($\mref{eq:alphabeta}$)$. Suppose that the
symmetric part $\beta$ of $r$ is invariant. Then $r$ is a solution
of \MAYBE of \ewt $\frac{\kappa+1}{4}$:
\begin{equation}
r_{12} r_{13}+r_{13} r_{23}-r_{23} r_{12} =\frac{\kappa+1}{4}(r_{13}+r_{31}) (r_{23}+r_{32})
\notag 
\end{equation}
if and only if $\alpha$ is an \tto $\calo$-operator with \bop
$\beta$ of \bwt $\kappa$:
\begin{equation}
\alpha(a^*)\apr
\alpha(b^*)-\alpha(R^*(\alpha(a^*))b^*+a^*L^*(\alpha(b^*))) =\kappa\beta(a^*)\apr\beta(b^*),\quad
\forall a^*,b^*\in A^*.
\notag 
\end{equation}
\mlabel{it:aybea}
\end{enumerate}
\mlabel{thm:aybea}
\end{theorem}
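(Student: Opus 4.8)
The plan is to reduce both tensor identities to operator identities by exploiting the finite-dimensionality of $A$. Since $\dim_\bfk A<\infty$, contracting the first two legs of a triple tensor against covectors and keeping the third leg gives a linear isomorphism $A\ot A\ot A\cong\Hom_\bfk(A^*\ot A^*,A)$; concretely, a tensor $T\in A\ot A\ot A$ is zero if and only if $(a^*\ot b^*\ot\id)(T)=0$ in $A$ for all $a^*,b^*\in A^*$. I will use repeatedly that, under the identification of $r=\sum_i a_i\ot b_i$ with the map $A^*\to A$, one has $r(u)=\sum_i\langle u,a_i\rangle b_i$ and $r^t(v)=\sum_i\langle v,b_i\rangle a_i$, together with the defining relations \mref{eq:dual} for the dual actions $R^*$ and $L^*$.

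For part \mref{it:aybr}, I would expand the three products in the tensor algebra,
$$r_{12}r_{13}=\sum_{i,j}a_ia_j\ot b_i\ot b_j,\qquad r_{13}r_{23}=\sum_{i,j}a_i\ot a_j\ot b_ib_j,\qquad r_{23}r_{12}=\sum_{i,j}a_j\ot a_ib_j\ot b_i,$$
and then contract the first leg against $a^*$ and the second against $b^*$. Using the formulas for $r$ and $r^t$ and the definitions of $R^*,L^*$, these three terms become $r(a^*L^*(r^t(b^*)))$, $r(a^*)\apr r(b^*)$, and $r(R^*(r(a^*))b^*)$ respectively, so the contracted AYBE reads
$$r(a^*L^*(r^t(b^*)))+r(a^*)\apr r(b^*)-r(R^*(r(a^*))b^*)=0,$$
which is exactly Eq.~\mref{eq:aybeform} after rearranging. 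Since the contraction map is a bijection, the vanishing of the AYBE tensor is equivalent to the vanishing of all these contractions; this yields the stated equivalence.

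For part \mref{it:aybea}, write $\beta=\sum_k p_k\ot q_k$ as a symmetric tensor and $\alpha$ as a skew tensor. Then $\alpha_{31}=-\alpha_{13}$ and $\beta_{31}=\beta_{13}$, whence $r_{13}+r_{31}=2\beta_{13}$ and $r_{23}+r_{32}=2\beta_{23}$, so the right-hand side of the $\frac{\kappa+1}{4}$-EAYBE equals $(\kappa+1)\beta_{13}\beta_{23}=(\kappa+1)\sum_{k,l}p_k\ot p_l\ot q_kq_l$. Contracting as before sends this to $(\kappa+1)\beta(a^*)\apr\beta(b^*)$, while the left-hand side contracts exactly as in part \mref{it:aybr}. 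Hence the EAYBE is equivalent to
$$r(a^*)\apr r(b^*)-r\big(R^*(r(a^*))b^*-a^*L^*(r^t(b^*))\big)=(\kappa+1)\,\beta(a^*)\apr\beta(b^*)$$
for all $a^*,b^*$. Now substitute $r=\alpha+\beta$ and $r^t=-\alpha+\beta$. Since $\beta$ is invariant, Lemma~\mref{le:syin} shows it is both balanced (so $R^*(\beta(a^*))b^*=a^*L^*(\beta(b^*))$) and an $A$-bimodule homomorphism (so $\beta(R^*(x)c^*)=x\apr\beta(c^*)$ and $\beta(c^*L^*(x))=\beta(c^*)\apr x$). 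Balancedness makes the symmetric part inside the argument of $r$ drop out, and the homomorphism property turns the $\beta$ of the surviving $\alpha$-arguments into cross terms $\alpha(a^*)\apr\beta(b^*)+\beta(a^*)\apr\alpha(b^*)$ that exactly cancel the cross terms produced by $r(a^*)\apr r(b^*)$. What remains, after absorbing the surviving term $\beta(a^*)\apr\beta(b^*)$ into the right-hand side, is $\alpha(a^*)\apr\alpha(b^*)-\alpha(R^*(\alpha(a^*))b^*+a^*L^*(\alpha(b^*)))=\kappa\,\beta(a^*)\apr\beta(b^*)$, which is the extended $\calo$-operator condition.

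The genuinely delicate part is the contraction bookkeeping: one must keep straight which leg carries $a_i$ versus $b_i$, correctly track the left and right multiplications created by the tensor-algebra product, and match them against the dual actions $R^*,L^*$ from \mref{eq:dual}; a swapped leg or an interchanged $R^*/L^*$ silently changes the resulting operator identity. The conceptual crux of part \mref{it:aybea} is then the cancellation: expanding $r=\alpha+\beta$ produces four products and three $r$-arguments, and it is precisely the balancedness and bimodule-homomorphism properties of the invariant symmetric part $\beta$ (Lemma~\mref{le:syin}) that collapse all cross terms and leave the clean mass-$\kappa$ equation. I do not expect any difficulty beyond this algebra, and both equivalences are two-sided because the leg-contraction map is a linear isomorphism in the finite-dimensional setting.
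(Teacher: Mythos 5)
Your proposal is correct and follows essentially the same route as the paper: the paper likewise converts the tensor identities into operator identities by pairing against $a^*\otimes b^*\otimes c^*$ (your contraction on two legs keeping the third is the same computation), obtains Eq.~(\mref{eq:aybeform}) in part (\mref{it:aybr}), and in part (\mref{it:aybea}) uses $r_{13}+r_{31}=2\beta_{13}$, $r_{23}+r_{32}=2\beta_{23}$ together with the balanced and $A$-bimodule-homomorphism properties of the invariant $\beta$ from Lemma~\mref{le:syin} to collapse the cross terms. Your explicit write-out of the cancellation after substituting $r=\alpha+\beta$, $r^t=-\alpha+\beta$ is exactly what the paper compresses into its second displayed equality, so there is nothing to correct.
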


\begin{proof}
(\mref{it:aybr})
Denote $r=\sum_{i,j}u_i\otimes v_j$. For any $a^*,b^*,c^*\in A^*$, we
have
\begin{eqnarray*}
\langle r_{12}\apr r_{13},a^*\otimes b^*\otimes c^*\rangle
&=&\sum_{i,j}\langle u_i\apr u_j,a^*\rangle \langle v_i,b^*\rangle
\langle v_j,c^*\rangle=\sum_j\langle
r^t(b^*)\apr u_j,a^*\rangle\langle v_j,b^*\rangle\\
&&=\langle r(a^*L^*(r^t(b^*))),c^*\rangle ,\notag \\
\langle r_{13}\apr r_{23},a^*\otimes b^*\otimes c^*\rangle
&=&\sum_{i,j}\langle u_i,a^*\rangle \langle u_j,b^*\rangle \langle
v_i\apr v_j,c^*\rangle=\sum_j\langle u_j,b^*\rangle\langle
r(a^*)\apr v_j,c^*\rangle\\
&=&\langle r(a^*)\apr r(b^*),c^*\rangle ,\\
\langle -r_{23}\apr r_{12},a^*\otimes b^*\otimes c^*\rangle
&=&-\sum_{i,j}\langle u_i,a^*\rangle \langle u_j\apr v_i,b^*\rangle
\langle v_j,c^*\rangle=-\sum_j\langle u_j\apr r(a^*),b^*\rangle
\langle v_j,c^*\rangle \\ &=&\langle -r(R^*(r(a^*))b^*),c^*\rangle
.\notag
\end{eqnarray*}
Therefore $r$ is a solution of the AYBE in
$A$ if and only if $r$ satisfies Eq.~(\mref{eq:aybeform}).
\smallskip

\noindent
(\mref{it:aybea})
By the proof of Item~(\mref{it:aybr}), we see that, for
any $a^*,b^*,c^*\in A^*$,
{\allowdisplaybreaks
\begin{eqnarray*}
& &\langle \alpha(a^*)\apr
\alpha(b^*)-\alpha(R^*(\alpha(a^*))b^*+a^*L^*(\alpha(b^*))) -\kappa\beta(a^*)\apr\beta(b^*),c^*\rangle\\
&=&\langle \alpha(a^*)\apr
\alpha(b^*)-\alpha(R^*(\alpha(a^*))b^*+a^*L^*(\alpha(b^*))) +\beta(a^*)\apr\beta(b^*)-
(\kappa+1)\beta(a^*)\apr\beta(b^*),c^*\rangle\\
&=&\langle r_{12}\apr r_{13}+r_{13}\apr r_{23}-r_{23}\apr
r_{12},a^*\otimes b^*\otimes c^*\rangle -(\kappa+1)\langle \beta_{13}\apr
\beta_{23},a^*\otimes b^*\otimes c^*\rangle\\
&=&\langle r_{12}\apr r_{13}+r_{13}\apr r_{23}-r_{23}\apr
r_{12}-(\kappa+1)\frac{r_{13}+r_{31}}{2}\apr \frac{r_{23}+r_{32}}{2},a^*\otimes b^*\otimes
c^*\rangle.
\end{eqnarray*}
} So $r$ is a solution of the \MAYBE of \ewt $(\kappa+1)/4$ if and
only if $\alpha$ is an \tto $\calo$-operator with \bop $\beta$ of
\bwt $\kappa$.
\end{proof}

In the case when $\kappa=-1$, we have
\begin{coro} Let $\bfk$ be a field of characteristic not equal to 2. Let $A$ be a $\bfk$-algebra with finite $\bfk$-dimension and let $r\in A\otimes A$.
Define $\alpha$ and $\beta$ by Eq.~$($\mref{eq:alphabeta}$)$.
\begin{enumerate}
\item
If $\beta$ is invariant, then the following conditions are equivalent.
\begin{enumerate}
\item $r$ is a solution of the AYBE in
$A$.\mlabel{it:aybe}
\item
 $r$ $($resp. $-r^t$$)$ satisfies Eq.~$($\mref{eq:opweight1}$)$ $($resp. Eq.~$($\mref{eq:topweight1}$))$,
 i.e., $r$ $($resp. $-r^t$$)$ is an
$\calo$-operator of weight 1 associated to the $A$-bimodule
$\bfk$-algebra $(A^*,\rpr_+,R^*,L^*)$ $($resp.
$(A^*,\rpr_-,R^*,L^*)$$)$, where $A^*$ is equipped with the
associative algebra structure $\rpr_{+}$ $($resp. $\rpr_{-}$$)$
defined by Eq.~$($\mref{eq:betaproduct}$)$.\mlabel{it:oo} \item
$\alpha$ is an \tto $\calo$-operator with \bop $\beta$ of \bwt
$-1$. \mlabel{it:mo} \item For any $a^*,b^*\in A^*$,
\begin{equation}
(\alpha\pm\beta)(a^*\ast
b^*)=(\alpha\pm\beta)(a^*)\apr(\alpha\pm\beta)(b^*),\mlabel{eq:rrtho}
\end{equation}
where
\begin{equation}
a^* \ast b^*=R^*(r(a^*))b^*-a^*L^*(r^t(b^*)),\;\;\forall a^*,b^*\in
A^*.
\notag 
\end{equation}
\mlabel{it:rrtho}
\end{enumerate}
\mlabel{it:aybeq}
\item
When $r$ is skew-symmetric, then $r$ is a solution of
the AYBE in $A$ if and only if $r:A^*\to A$
is an $\mathcal O$-operator of weight zero.
\mlabel{it:aybss}
\end{enumerate}
\mlabel{co:aybea}
\end{coro}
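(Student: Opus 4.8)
The plan is to assemble the statement almost entirely from Theorem~\mref{thm:aybea} and Corollary~\mref{co:abas}, so the first task is to make their hypotheses available. By Eq.~(\mref{eq:alphabeta}), $\alpha$ is the skew-symmetric part and $\beta$ the symmetric part of $r$; thus $\beta$ is symmetric, and it is assumed invariant. By Lemma~\mref{le:syin}, invariance of a symmetric $\beta$ is equivalent to $\beta$ being both balanced and an $A$-bimodule homomorphism from $(A^*,R^*,L^*)$ to $A$. In particular $\beta$ is a balanced $A$-bimodule homomorphism, so Corollary~\mref{co:abas} applies verbatim.

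With this in hand I would prove the four conditions of the first part equivalent through the single hub ``$\alpha$ is an \tto $\calo$-operator with \bop $\beta$ of \bwt $-1$'' (Eq.~(\mref{eq:-1maybe})). First, this hub is equivalent to ``$r$ solves the AYBE~(\mref{eq:aybe})'': apply Theorem~\mref{thm:aybea}(\mref{it:aybea}) with $\kappa=-1$, so that the \ewt $\frac{\kappa+1}{4}$ is $0$ and the $\esym$-\MAYBE degenerates to the AYBE; the theorem then reads exactly this equivalence. Second, the hub is equivalent to the $\calo$-operator conditions Eq.~(\mref{eq:opweight1}) for $r$ and Eq.~(\mref{eq:topweight1}) for $-r^t$, which is precisely Corollary~\mref{co:abas}, whose hypotheses were just verified.

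The only genuinely new computation is the equivalence of these $\calo$-operator conditions with Eq.~(\mref{eq:rrtho}), and I expect this bookkeeping to be the main, if modest, obstacle. Using $r=\alpha+\beta$ and $-r^t=\alpha-\beta$, I would rewrite the argument appearing in Eq.~(\mref{eq:opweight1}) and Eq.~(\mref{eq:topweight1}) so that it becomes $a^*\ast b^*$. For the $+$ case, Eq.~(\mref{eq:opweight1}) has argument $R^*(r(a^*))b^*+a^*L^*(r(b^*))+a^*\rpr_+b^*$ with $a^*\rpr_+b^*=-2R^*(\beta(a^*))b^*$; the balanced identity $R^*(\beta(a^*))b^*=a^*L^*(\beta(b^*))$ turns $a^*L^*(r(b^*))-2R^*(\beta(a^*))b^*$ into $a^*L^*(r(b^*)-2\beta(b^*))=a^*L^*(-r^t(b^*))$, so the whole argument collapses to $R^*(r(a^*))b^*-a^*L^*(r^t(b^*))=a^*\ast b^*$ and Eq.~(\mref{eq:opweight1}) becomes $r(a^*)\apr r(b^*)=r(a^*\ast b^*)$, the $+$ instance of Eq.~(\mref{eq:rrtho}). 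The $-$ case is identical after grouping the $R^*$-terms instead: $R^*((-r^t)(a^*))b^*+2R^*(\beta(a^*))b^*=R^*(r(a^*))b^*$, which again yields the argument $a^*\ast b^*$. Thus the two conditions match term by term.

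Finally, for the skew-symmetric part I would specialize: when $r$ is skew-symmetric, $r^t=-r$ forces $\beta=0$ (trivially invariant) and $\alpha=r$, so the first part applies and an \tto $\calo$-operator with zero \bop is just an $\calo$-operator of weight zero. Equivalently and more directly, Theorem~\mref{thm:aybea}(\mref{it:aybr}) says $r$ solves the AYBE iff Eq.~(\mref{eq:aybeform}) holds, and under $r^t=-r$ its right-hand argument becomes $R^*(r(a^*))b^*+a^*L^*(r(b^*))$, which is exactly the weight-zero $\calo$-operator condition for the bimodule $(A^*,R^*,L^*)$.
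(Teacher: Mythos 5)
Your proposal is correct and follows essentially the same route as the paper: both assemble the result from Lemma~\mref{le:syin}, Theorem~\mref{thm:aybea} and Corollary~\mref{co:abas}, and both hinge on the same computation using the balanced identity $R^*(\beta(a^*))b^*=a^*L^*(\beta(b^*))$ together with $r-2\beta=-r^t$ and $-r^t+2\beta=r$ to collapse the arguments of Eq.~(\mref{eq:opweight1}) and Eq.~(\mref{eq:topweight1}) to $a^*\ast b^*$, with part (\mref{it:aybss}) as the special case $\beta=0$. The only cosmetic difference is that you obtain (\mref{it:aybe})$\Leftrightarrow$(\mref{it:mo}) directly from Theorem~\mref{thm:aybea}(\mref{it:aybea}) at $\kappa=-1$ (where the \ewt $\frac{\kappa+1}{4}=0$ reduces the \MAYBE to the AYBE), whereas the paper routes through the operator form Eq.~(\mref{eq:aybeform}) of Theorem~\mref{thm:aybea}(\mref{it:aybr}); the two are interchangeable.
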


\begin{proof}
If the symmetric part $\beta$ of $r$ is invariant, then by
Lemma~\mref{le:syin}, for any $a^*,b^*\in A^*$, we have
\begin{eqnarray*}
& &r(a^*)\apr r(b^*)-r(R^*(r(a^*))b^*-a^*L^*(r^t(b^*)))\\
&=&r(a^*)\apr
r(b^*)-r(R^*(r(a^*))b^*+a^*L^*(r(b^*))-2a^*L^*(\beta(b^*)))\\
&=&r(a^*)\apr r(b^*)-r(R^*(r(a^*))b^*+a^*L^*(r(b^*))+a^*\rpr_{+}
b^*),
\end{eqnarray*}
where the product $\rpr_+$ is defined by
Eq.~(\mref{eq:betaproduct}). So by Corollary~\mref{co:abas}, $r$
is a solution of the AYBE if and only if
Item (\mref{it:oo}) or (\mref{it:mo}) holds. Moreover, since for
any $a^*,b^*\in A^*$,
\begin{eqnarray*}
R^*(r(a^*))b^*-a^*L^*(r^t(b^*))&=&R^*(r(a^*))b^*+a^*L^*(r(b^*))+a^*\rpr_{+}
b^*\\
&=&R^*((-r^t)(a^*))b^*+a^*L^*((-r^t)(b^*))+a^*\rpr_{-} b^*,
\end{eqnarray*}
Eq.~(\mref{eq:rrtho}) is just a reformulation of
Eq.~(\mref{eq:opweight1}) and Eq.~(\mref{eq:topweight1}). So $r$ is a
solution of the AYBE if and only if
Item~(\mref{it:rrtho}) holds.
\smallskip

\noindent
(\mref{it:aybss}) This is the special case of Item~(\mref{it:aybeq}) when $\beta=0$.
\end{proof}

\subsection{$\calo$-operators and AYBE on Frobenius algebras}
\mlabel{ss:frob}
In this section we consider the relationship between $\calo$-operators and solutions of the AYBE on Frobenius algebras.

\begin{defn}
\begin{enumerate}
\item
Let $A$ be a $\bfk$-algebra and let $B(\;,\;): A\ot A\to \bfk$ be a nondegenerate bilinear form. Let $\varphi:A\rightarrow A^*$ denote
the induced injective linear map defined by
\begin{equation}
B(x,y)=\langle \varphi(x),y\rangle,\quad \forall x,y\in A. \mlabel{eq:definelinearmap}
\end{equation}
\item
A {\bf Frobenius $\bfk$-algebra} is a $\bfk$-algebra $(A,\apr\,)$ together with a nondegenerate bilinear form  $B(\;,\;): A\ot A \to \bfk$ that is invariant in the sense that
\begin{equation}
B(x\apr y,z)=B(x,y\apr z),\;\forall x,y,z\in A.
\notag 
\end{equation}
We use $(A,\apr,B)$
to denote a Frobenius $\bfk$-algebra. \item A Frobenius
$\bfk$-algebra is called {\bf symmetric} if
\begin{equation}
B(x,y)=B(y,x), \; \forall x,y\in A.
\notag 
\end{equation}
\item
A linear map $\beta: A\to A$ is called {\bf self-adjoint} (resp. {\bf skew-adjoint}) with respect to a bilinear form $B$ if for any $x,y\in A$, we have $B(\beta(x),y)=B(x,\beta(y))$ (resp. $B(\beta(x),y)=-B(x,\beta(y))$).
\end{enumerate}
\mlabel{de:frob}
\end{defn}
The study of Frobenius algebras was originated from the work~\mcite{Fro} of Frobenius and has found applications in broad areas of mathematics and physics. See~\mcite{Y, Bai2} for further details.

It is easy to get the
following result.
\begin{prop} \text{(\mcite{Y})} Let $A$ be a symmetric Frobenius $\bfk$-algebra with finite $\bfk$-dimension. Then the $A$-bimodule $(A,L,R)$ is isomorphic to the $A$-bimodule $(A^*,R^*,L^*)$. \mlabel{pp:frob}
\end{prop}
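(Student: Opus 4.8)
The plan is to establish an explicit $A$-bimodule isomorphism between $(A,L,R)$ and $(A^*,R^*,L^*)$ using the nondegenerate invariant symmetric bilinear form $B$. The natural candidate is the induced map $\varphi:A\to A^*$ from Eq.~$(\mref{eq:definelinearmap})$, given by $\langle\varphi(x),y\rangle=B(x,y)$ for all $x,y\in A$. First I would observe that since $B$ is nondegenerate and $A$ has finite $\bfk$-dimension, $\varphi$ is a linear isomorphism: it is injective by nondegeneracy, and a dimension count (using $\dim_\bfk A=\dim_\bfk A^*$) forces surjectivity. The symmetry of $B$ will be used to handle both the left and right module compatibilities symmetrically.

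The core of the argument is to verify that $\varphi$ intertwines the two bimodule structures, namely that for all $x\in A$,
\begin{equation}
\varphi(L(x)y)=R^*(x)\varphi(y),\qquad \varphi(R(x)y)=\varphi(y)L^*(x),\quad\forall\,y\in A.
\notag
\end{equation}
To check the first identity I would pair both sides against an arbitrary $z\in A$. On the left, $\langle\varphi(L(x)y),z\rangle=\langle\varphi(x\apr y),z\rangle=B(x\apr y,z)$. On the right, by the definition of the dual action in Eq.~$(\mref{eq:dual})$, $\langle R^*(x)\varphi(y),z\rangle=\langle\varphi(y),zR(x)\rangle=\langle\varphi(y),z\apr x\rangle=B(y,z\apr x)$. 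The invariance of $B$ gives $B(x\apr y,z)=B(x,y\apr z)$ and, combined with symmetry, $B(y,z\apr x)=B(z\apr x,y)=B(z,x\apr y)=B(x\apr y,z)$; so the two pairings agree for all $z$, and nondegeneracy yields the first identity. The second identity follows by the same computation with the roles of the left and right slots exchanged, again invoking symmetry and invariance of $B$.

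I expect the computation to be essentially routine; the only genuine subtlety is bookkeeping the correct dual actions, since the target bimodule is $(A^*,R^*,L^*)$ rather than $(A^*,L^*,R^*)$, so the \emph{left} action on $A^*$ is $R^*$ and the \emph{right} action is $L^*$. Keeping this swap straight, together with the symmetry hypothesis (which is what distinguishes this from a general Frobenius algebra and allows $\varphi$ itself, rather than a twisted version, to serve as the isomorphism), is the main point to get right. Having verified both intertwining relations and the bijectivity of $\varphi$, the conclusion that $(A,L,R)\cong(A^*,R^*,L^*)$ as $A$-bimodules follows directly from Definition~$(\mref{de:bim})$.
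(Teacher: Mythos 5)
Your proof is correct and is exactly the standard argument: the paper itself gives no proof (it cites \mcite{Y}), but the isomorphism it has in mind is precisely the map $\varphi$ of Eq.~(\mref{eq:definelinearmap}), as the paper later confirms by noting (after Lemma~\mref{le:frosy}) that $\varphi^{-1}:A^*\to A$ is a balanced $A$-bimodule homomorphism. Your verification --- injectivity plus dimension count for bijectivity, invariance alone for the right-action intertwining $\varphi(y\apr x)=\varphi(y)L^*(x)$, and symmetry combined with invariance for the left-action intertwining $\varphi(x\apr y)=R^*(x)\varphi(y)$ --- correctly isolates where the symmetry hypothesis enters, and matches the intended proof.
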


The following statement gives a class of symmetric Frobenius algebras from symmetric, invariant tensors.

\begin{coro}
Let $(A,\apr)$ be a $\bfk$-algebra with finite $\bfk$-dimension. Let $s\in A\otimes A$
be symmetric and invariant.
Suppose that $s$ regarded as a linear map from $A^*\to A$ is
invertible. Then $s^{-1}:A\to A^*$ regraded as a bilinear form
$B(\;,\;):A\otimes A\to \textbf{k}$ on $A$ through
Eq.~$($\mref{eq:definelinearmap}$)$ for $\varphi=s^{-1}$ is symmetric,
nondegenerate and invariant. Thus $(A,\apr, B)$ is a symmetric
Frobenius algebra.\mlabel{co:synoninbi}
\end{coro}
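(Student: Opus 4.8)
The plan is to check separately the three conditions that, by Definition~\mref{de:frob}, make $(A,\apr,B)$ a symmetric Frobenius algebra: that $B$ is nondegenerate, symmetric, and invariant. Throughout I write $\varphi=s^{-1}:A\to A^*$, so that $B(x,y)=\langle\varphi(x),y\rangle=\langle s^{-1}(x),y\rangle$ by Eq.~$(\mref{eq:definelinearmap})$, and I keep identifying the tensor $s$ with the linear map $s:A^*\to A$ satisfying $\langle v,s(u)\rangle=\langle u\ot v,s\rangle$ for $u,v\in A^*$.

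Nondegeneracy is nearly automatic: since $s$ is assumed invertible, $\varphi=s^{-1}$ is a linear isomorphism $A\to A^*$. If $B(x,y)=0$ for all $y$ then $\langle\varphi(x),y\rangle=0$ for all $y$, forcing $\varphi(x)=0$ and hence $x=0$ by injectivity; if $B(x,y)=0$ for all $x$ then, by surjectivity of $\varphi$, $\langle f,y\rangle=0$ for all $f\in A^*$, forcing $y=0$. For symmetry I substitute $u^*=s^{-1}(x)$ and $v^*=s^{-1}(y)$, so that $x=s(u^*)$, $y=s(v^*)$, and compute $B(x,y)=\langle u^*,s(v^*)\rangle=\langle v^*\ot u^*,s\rangle$ and $B(y,x)=\langle v^*,s(u^*)\rangle=\langle u^*\ot v^*,s\rangle$; these agree precisely because $s$ is a symmetric tensor. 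Thus $B$ is symmetric.

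The substantive point is invariance, and here I would invoke Lemma~\mref{le:syin}: since $s$ is symmetric and invariant, it is an $A$-bimodule homomorphism from $(A^*,R^*,L^*)$ to $(A,L,R)$, i.e. $s(R^*(x)a^*)=x\apr s(a^*)$ and $s(a^*L^*(x))=s(a^*)\apr x$ as in Eq.~$(\mref{eq:dualsabi})$. Being a bijective bimodule homomorphism, $s$ has a bimodule-homomorphism inverse, which after applying $s^{-1}$ to these identities translates into
\begin{equation}
s^{-1}(x\apr b)=R^*(x)s^{-1}(b),\qquad s^{-1}(b\apr x)=s^{-1}(b)L^*(x),\qquad\forall\,x,b\in A.
\notag
\end{equation}
Using the first of these together with the defining rule $\langle R^*(x)u^*,v\rangle=\langle u^*,v\apr x\rangle$ of the dual action from Proposition~\mref{pp:dual}, I get $B(x\apr y,z)=\langle R^*(x)s^{-1}(y),z\rangle=\langle s^{-1}(y),z\apr x\rangle=B(y,z\apr x)$. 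Applying the same computation after using the already-established symmetry of $B$ gives $B(x,y\apr z)=B(y\apr z,x)=\langle R^*(y)s^{-1}(z),x\rangle=\langle s^{-1}(z),x\apr y\rangle=B(z,x\apr y)$. Since $B$ is symmetric we also have $B(x\apr y,z)=B(z,x\apr y)$, and combining these gives $B(x\apr y,z)=B(z,x\apr y)=B(x,y\apr z)$, which is exactly the invariance of $B$.

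I expect the only genuine obstacle to be the bookkeeping in the invariance step---correctly transporting the bimodule-homomorphism identities through $s^{-1}$ and keeping track of which of $R^*,L^*$ pairs with left versus right multiplication. Everything else is formal once Lemma~\mref{le:syin} supplies the bimodule-homomorphism description of a symmetric invariant $s$, and the symmetric Frobenius conclusion then follows immediately from Definition~\mref{de:frob}.
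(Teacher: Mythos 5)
Your proof is correct and takes essentially the same route as the paper: symmetry and nondegeneracy follow immediately from the symmetry and invertibility of $s$, and invariance is reduced to Lemma~\mref{le:syin} plus a short chain of dual-pairing identities. The only cosmetic difference is that you use the bimodule-homomorphism characterization (Eq.~(\mref{eq:dualsabi}), item (iii) of the lemma) transported through $s^{-1}$ and combined with the symmetry of $B$, whereas the paper applies the equivalent balanced-map characterization (Eq.~(\mref{eq:dualssy}), item (ii)) directly in one computation; the two are interchangeable by the lemma itself, so your argument is sound as written.
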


\begin{proof}
Since $s$ is symmetric and $s$ regarded as a linear map from $A^*$
to $A$ is invertible, $B(\;,\;)$ is symmetric and nondegenerate. On
the other hand, since $s$ is invariant, by Lemma~\mref{le:syin}
Eq.~(\mref{eq:dualssy}) holds. Thus, for any $x,y,z\in A$ and
$a^*=s^{-1}(x),b^*=s^{-1}(y),c^*=s^{-1}(z)$, we have
\begin{eqnarray*}
B(x\apr y,z)&=&\langle c^*,s(a^*)\apr s(b^*)\rangle=\langle
c^*L^*(s(a^*)),b^*\rangle\\
&=&\langle R^*(s(c^*))a^*,b^*\rangle=\langle a^*,s(b^*)\apr
s(c^*)\rangle=B(x,y\apr z),
\end{eqnarray*}
i.e., $B(\;,\;)$ is invariant. So the conclusion follows.
\end{proof}

\begin{lemma}
Let $(A,\apr,B)$ be a symmetric Frobenius $\bfk$-algebra with finite $\bfk$-dimension. Suppose
that $\beta:A\rightarrow A$ is an endomorphism of $A$ which is
self-adjoint with respect to $B$. Then
$\tilde{\beta}=\beta\varphi^{-1}:A^*\rightarrow A$ regarded as an
element of $A\otimes A$ is symmetric, where $\varphi:A\rightarrow
A^*$ is defined by Eq.~$($\mref{eq:definelinearmap}$)$. Moreover,
$\beta$ is a balanced $A$-bimodule homomorphism if and only if
$\tilde{\beta}$ is a balanced $A$-bimodule homomorphism.
\mlabel{le:frosy}
\end{lemma}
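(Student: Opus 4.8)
The plan is to treat the two assertions separately: first the symmetry of $\tilde{\beta}$, then the equivalence of the two ``balanced $A$-bimodule homomorphism'' conditions.

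For the symmetry, I would identify $\tilde{\beta}=\beta\varphi^{-1}:A^*\to A$ with an element of $A\ot A$ and test the criterion $\tilde{\beta}=\tilde{\beta}^t$ directly on the pairing. Writing $u=\varphi(x)$ and $v=\varphi(y)$ for $x,y\in A$ (legitimate since $\varphi$ is a bijection by nondegeneracy of $B$), one computes $\langle v,\tilde{\beta}(u)\rangle=\langle\varphi(y),\beta(x)\rangle=B(y,\beta(x))$ and likewise $\langle u,\tilde{\beta}(v)\rangle=B(x,\beta(y))$. Symmetry of $\tilde{\beta}$ then amounts to $B(y,\beta(x))=B(x,\beta(y))$ for all $x,y$, which follows by combining the symmetry of $B$ with the self-adjointness of $\beta$: $B(y,\beta(x))=B(\beta(x),y)=B(x,\beta(y))$. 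This is the only place where both hypotheses are genuinely used.

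For the ``moreover'' part, the key observation is that each side collapses to the single condition ``$A$-bimodule homomorphism''. On the one hand, for the bimodule $(A,L,R)$ every $A$-bimodule homomorphism $\beta:(A,L,R)\to A$ is automatically balanced (the example after Definition~\mref{de:co}; concretely $\beta(u)\apr v=\beta(u\apr v)=u\apr\beta(v)$), so ``$\beta$ is a balanced $A$-bimodule homomorphism'' is equivalent to ``$\beta$ is an $A$-bimodule homomorphism''. On the other hand, $\tilde{\beta}$ is symmetric by the first part, so Lemma~\mref{le:syin} applies and shows that for $\tilde{\beta}$ the conditions ``balanced'' and ``$A$-bimodule homomorphism'' are equivalent (both equivalent to invariance); hence ``$\tilde{\beta}$ is a balanced $A$-bimodule homomorphism'' is likewise equivalent to ``$\tilde{\beta}$ is an $A$-bimodule homomorphism''.

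It then remains to link the two bimodule-homomorphism conditions, and here I would use that $\varphi:(A,L,R)\to(A^*,R^*,L^*)$ is an isomorphism of $A$-bimodules (Proposition~\mref{pp:frob}); explicitly, invariance and symmetry of $B$ give $\varphi(x\apr u)=R^*(x)\varphi(u)$ and $\varphi(u\apr x)=\varphi(u)L^*(x)$, so that $\varphi^{-1}(R^*(x)a^*)=x\apr\varphi^{-1}(a^*)$ and $\varphi^{-1}(a^*L^*(x))=\varphi^{-1}(a^*)\apr x$. Since $\tilde{\beta}=\beta\circ\varphi^{-1}$ is the composite of $\beta$ with the bimodule isomorphism $\varphi^{-1}$, feeding these identities into the defining relations for $\tilde{\beta}$ turns $\tilde{\beta}(R^*(x)a^*)=x\apr\tilde{\beta}(a^*)$ into $\beta(x\apr u)=x\apr\beta(u)$ (with $u=\varphi^{-1}(a^*)$ ranging over all of $A$), and symmetrically on the right; thus $\tilde{\beta}$ is an $A$-bimodule homomorphism if and only if $\beta$ is. Chaining the three equivalences yields the claim. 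The main obstacle is purely bookkeeping: keeping straight the ``twisted'' dual structure $(A^*,R^*,L^*)$ (in which the left action is $R^*$ and the right action is $L^*$) so that $\varphi$ and $\varphi^{-1}$ intertwine the actions in the correct order; once that is pinned down, every individual step is a short computation.
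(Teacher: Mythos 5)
Your proof is correct, and its second half is organized genuinely differently from the paper's. The symmetry part is exactly the paper's computation. For the ``moreover'' part, you collapse ``balanced $A$-bimodule homomorphism'' to plain ``$A$-bimodule homomorphism'' on each side separately --- on the $\beta$ side via the automatic balancedness of bimodule homomorphisms $(A,L,R)\to A$ (the example after Definition~\mref{de:co}), on the $\tilde\beta$ side via Lemma~\mref{le:syin}, which is legitimate only because you have already proved $\tilde\beta$ symmetric --- and then transport the homomorphism property along the bimodule isomorphism $\varphi:(A,L,R)\to(A^*,R^*,L^*)$, whose intertwining identities $\varphi(x\apr u)=R^*(x)\varphi(u)$ and $\varphi(u\apr x)=\varphi(u)L^*(x)$ you correctly derive from the symmetry and invariance of $B$. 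The paper instead works with the balanced condition on $\tilde\beta$ directly: two pairing computations show that Eq.~(\mref{eq:dualssy}) for $\tilde\beta$ is equivalent to the left-homomorphism condition $\beta(z\apr x)=z\apr\beta(x)$ and, separately, to the right-homomorphism condition $\beta(y\apr z)=\beta(y)\apr z$; Lemma~\mref{le:syin} then converts balancedness of $\tilde\beta$ into the bimodule-homomorphism property. The underlying $B$-manipulations are the same in both arguments, so this is a repackaging rather than a new method, but the two versions buy different things: yours is more modular (the equivalence of the homomorphism conditions for $\beta$ and $\tilde\beta$ is an instance of transport of structure along an isomorphism, reusable elsewhere), while the paper's yields the slightly stronger byproduct that balancedness of $\tilde\beta$ alone already forces each one-sided homomorphism condition on $\beta$. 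One small caveat: Proposition~\mref{pp:frob} as stated only asserts the existence of a bimodule isomorphism $(A,L,R)\cong(A^*,R^*,L^*)$, not that $\varphi$ itself realizes it; since you verify the intertwining identities for $\varphi$ explicitly, nothing is actually missing, but the citation alone would not suffice.
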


\begin{proof}
For any $a^*,b^*\in A^*$, set $x=\phi^{-1}(a^*), y=\phi^{-1}(b^*)$. Since $\beta$ is self-adjoint with respect to $B$, we see that
$$\langle \tilde{\beta}(a^*),b^*\rangle = \langle
\beta(x),\varphi(y)\rangle=B(\beta(x), y)=B(x,\beta(y))=\langle\varphi(x),\beta(y)\rangle
=\langle a^*,\tilde{\beta}(b^*)\rangle.$$
Therefore,
$\tilde{\beta}$ regarded as an element of $A\otimes A$ is symmetric.
Moreover, since $B$ is symmetric and invariant and $\beta$ is
self-adjoint with respect to $B$, for any $a^*,b^*\in A^*,z\in A$
and $x=\varphi^{-1}(a^*),y=\varphi^{-1}(b^*)$, we have
\begin{eqnarray*}
\langle R^*(\tilde{\beta}(a^*))b^*,z\rangle&=&\langle
R^*(\beta(x))\varphi(y),z\rangle=B(y,z\apr\beta(x))\\
\langle
a^*L^*(\tilde{\beta}(b^*)),z\rangle&=&\langle\varphi(x)L^*(\beta(y)),z\rangle=B(x,\beta(y)\apr
z)=B(\beta(y),z\apr x)=B(y,\beta(z\apr x)).
\end{eqnarray*}
Thus $\tilde{\beta}$ satisfies Eq.~(\mref{eq:dualssy}) if and only
if $\beta(z\apr x)=z\apr\beta(x)$, for any $x,z\in A$. On the
other hand,
\begin{eqnarray*}
\langle R^*(\tilde{\beta}(a^*))b^*,z\rangle&=&\langle
R^*(\beta(x))\varphi(y),z\rangle=B(y,z\apr\beta(x))=B(\beta(x),y\apr z)=B(x,\beta(y\apr z)),\\
\langle
a^*L^*(\tilde{\beta}(b^*)),z\rangle&=&\langle\varphi(x)L^*(\beta(y)),z\rangle=B(x,\beta(y)\apr
z).
\end{eqnarray*}
Therefore, $\tilde{\beta}$ satisfies Eq.~(\mref{eq:dualssy}) if and
only if $\beta(y\apr z)=\beta(y)\apr z$, for any $y,z\in A$. Hence
$\beta$ is an $A$-bimodule homomorphism if and only if
$\tilde{\beta}$ is an $A$-bimodule homomorphism. Furthermore,
the equivalence between Eq.~(\mref{eq:dualssy}) and
Eq.~(\mref{eq:dualsabi}) follows from Lemma~\mref{le:syin}.
\end{proof}

If $\beta={\rm id}$, then the above lemma says that
$\varphi^{-1}:A^*\rightarrow A$ is a balanced $A$-bimodule
homomorphism.

\begin{coro}
Let $(A,\apr,B)$ be a symmetric Frobenius $\bfk$-algebra of finite $\bfk$-dimension and let $\varphi:A\to A^*$ be the linear map defined by
Eq.~$($\mref{eq:definelinearmap}$)$. Suppose $\beta\in A\otimes A$
is symmetric. Then $\beta$ regarded as a linear map from
$(A^*,R^*,L^*)$ to $A$ is a balanced $A$-bimodule homomorphism if
and only if $\hat{\beta}=\beta\varphi:A\to A$ is a balanced
$A$-bimodule homomorphism.\mlabel{co:frosy1}
\end{coro}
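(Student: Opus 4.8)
The plan is to deduce the corollary directly from Lemma~\mref{le:frosy}, applied with the self-adjoint endomorphism there taken to be $\hat\beta$ itself. The only genuine content is to check that the symmetry of $\beta$ is exactly what makes $\hat\beta=\beta\varphi$ self-adjoint with respect to $B$; once this is established, the two balancedness/homomorphism conditions are identified by the lemma and nothing further is required. Note that, since $B$ is nondegenerate and $A$ is finite-dimensional, $\varphi:A\to A^*$ is a bijection, so $\varphi^{-1}$ is available throughout.

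First I would verify that $\hat\beta$ is self-adjoint with respect to $B$. Given $x,y\in A$, put $u=\varphi(x)$ and $v=\varphi(y)$ in $A^*$; as $\varphi$ is bijective these range over all of $A^*$. Using the defining relation $B(a,b)=\langle\varphi(a),b\rangle$ from Eq.~\mref{eq:definelinearmap} together with the symmetry $B(a,b)=B(b,a)$, one computes
\[
B(\hat\beta(x),y)=B(y,\beta(\varphi(x)))=\langle\varphi(y),\beta(\varphi(x))\rangle=\langle v,\beta(u)\rangle .
\]
The hypothesis that $\beta\in A\ot A$ is symmetric means, as a linear map, that $\langle v,\beta(u)\rangle=\langle u,\beta(v)\rangle$ for all $u,v\in A^*$. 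Hence
\[
B(\hat\beta(x),y)=\langle u,\beta(v)\rangle=\langle\varphi(x),\beta(\varphi(y))\rangle=B(x,\hat\beta(y)),
\]
so $\hat\beta$ is self-adjoint. Reading the same chain of equalities backwards, and using that $u=\varphi(x),v=\varphi(y)$ exhaust $A^*$, shows conversely that self-adjointness of $\hat\beta$ forces $\langle v,\beta(u)\rangle=\langle u,\beta(v)\rangle$, i.e. the symmetry of $\beta$; thus the two conditions are in fact equivalent.

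It remains to observe that the element of $A\ot A$ attached to $\hat\beta$ in Lemma~\mref{le:frosy} is $\hat\beta\varphi^{-1}=\beta\varphi\varphi^{-1}=\beta$. Applying that lemma to the self-adjoint endomorphism $\hat\beta$, its conclusion reads: $\hat\beta$ is a balanced $A$-bimodule homomorphism if and only if $\hat\beta\varphi^{-1}=\beta$, regarded as a linear map from $(A^*,R^*,L^*)$ to $A$, is a balanced $A$-bimodule homomorphism. This is precisely the assertion of the corollary. The self-adjointness computation above is the only step demanding care, and even there the sole obstacle is bookkeeping: tracking which instance of the pairing $\langle\,,\,\rangle:A^*\times A\to\bfk$ is in play and invoking the symmetry of $B$ and the symmetry of $\beta$ at the correct moments; no input beyond Lemma~\mref{le:frosy} and the bijectivity of $\varphi$ is needed.
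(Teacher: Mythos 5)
Your proof is correct and takes essentially the same route as the paper's: both establish that the symmetry of $\beta$ (together with the symmetry of $B$) makes $\hat{\beta}=\beta\varphi$ self-adjoint with respect to $B$, and then conclude by applying Lemma~\mref{le:frosy} to the endomorphism $\hat{\beta}$, whose associated tensor is $\hat{\beta}\varphi^{-1}=\beta$. Your explicit verification that this attached element is exactly $\beta$, and your remark on the converse equivalence, only spell out what the paper's shorter proof leaves implicit.
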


\begin{proof}
Since $\beta\in A\otimes A$ is symmetric, for any $x,y\in A$,
\begin{eqnarray*}
&& \langle \beta,\varphi(x)\otimes\varphi(y)\rangle =\langle\beta,\varphi(y)\otimes\varphi(x)\rangle \\
&\Leftrightarrow&
\langle\beta(\varphi(x)),\varphi(y)\rangle =\langle\beta(\varphi(y)),\varphi(x)\rangle\\
&\Leftrightarrow&
B(\hat{\beta}(x),y)=B(\hat{\beta}(y),x).
\end{eqnarray*}
Thus $\hat{\beta}=\beta\varphi$ is self-adjoint with respect to
$B(\;,\;)$. So the conclusion follows from Lemma~\mref{le:frosy}.
\end{proof}

\begin{theorem}
Let $\bfk$ be a field of characteristic not equal to 2. Let $(A,\apr,B)$ be a symmetric Frobenius algebra of finite $\bfk$-dimension. Suppose that
$\alpha$ and $\beta$ are two endomorphisms of $A$ and $\beta$ is
self-adjoint with respect to $B$.
\begin{enumerate}
\item $\alpha$ is an \tto $\calo$-operator with \bop $\beta$ of
\bwt $\kappa$ if and only if
$\tilde{\alpha}:=\alpha\circ\varphi^{-1}:A^*\rightarrow A$ is an
\tto $\calo$-operator with \bop
$\tilde{\beta}:=\beta\circ\varphi^{-1}:A^*\rightarrow A$ of \bwt
$\kappa$, where the linear map $\varphi:A\rightarrow A^*$ is
defined by Eq.~$($\mref{eq:definelinearmap}$)$. \mlabel{it:tildealpha}
\item Suppose that in addition, $\alpha$ is skew-adjoint with
respect to $B$. Then $\tilde\alpha$ regarded as an element of
$A\otimes A$ is skew-symmetric and we have \mlabel{it:skewadjoint}
\begin{enumerate}
\item $r_{\pm}=\tilde{\alpha}\pm\tilde{\beta}$
regarded as an element of $A\otimes A$ is a solution of the \MAYBE of
\bwt $\frac{\kappa+1}{4}$ if and only if $\alpha$ is an \tto
$\calo$-operator with \bop $\beta$ of \type $k$. \mlabel{it:fk2}
\item
 If $\kappa =-1$, then $r_{\pm}=\tilde{\alpha}\pm\tilde{\beta}$ regarded as
an element of $A\otimes A$ is a solution of the AYBE if and only if
$\alpha$ is an \tto $\calo$-operator with \bop $\beta$ of \type
$-1$. \mlabel{it:fk1}
\item If $\kappa =0$, then $\tilde{\alpha}$ regarded as an element
of $A\otimes A$ is a solution of the AYBE if and only if $\alpha$ is
a Rota-Baxter operator of weight zero.\mlabel{it:fk3}
\end{enumerate}
\end{enumerate} \mlabel{thm:equivalence}
\end{theorem}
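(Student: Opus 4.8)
The plan is to reduce Theorem~\mref{thm:equivalence} to the results already established in Sections~\mref{sec:moop} and \mref{ss:oaybe} by transporting everything through the isomorphism $\varphi:A\to A^*$ furnished by the nondegenerate invariant form $B$. The key observation, which makes this transport clean, is that for a symmetric Frobenius algebra the map $\varphi$ intertwines the bimodule $(A,L,R)$ with the dual bimodule $(A^*,R^*,L^*)$ (Proposition~\mref{pp:frob}). Concretely, $B$ being invariant and symmetric gives $\langle\varphi(x\apr y),z\rangle=B(x\apr y,z)=B(x,y\apr z)=\langle\varphi(x),y\apr z\rangle$, which one packages into the two identities $\varphi(x\apr y)=\varphi(x)L^*(y)$ and $\varphi(y\apr x)=R^*(y)\varphi(x)$. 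These are exactly the relations needed to convert left/right multiplications on $A$ into the dual actions $L^*,R^*$ on $A^*$.

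For part~(\mref{it:tildealpha}), I would write out the defining equation for $\tilde\alpha=\alpha\varphi^{-1}$ being an \tto $\calo$-operator with \bop $\tilde\beta=\beta\varphi^{-1}$ of \bwt $\kappa$, namely
\begin{equation}
\tilde\alpha(a^*)\apr\tilde\alpha(b^*)-\tilde\alpha\big(R^*(\tilde\alpha(a^*))b^*+a^*L^*(\tilde\alpha(b^*))\big)=\kappa\,\tilde\beta(a^*)\apr\tilde\beta(b^*),
\notag
\end{equation}
substitute $a^*=\varphi(x),b^*=\varphi(y)$, and use the two intertwining identities above to rewrite $R^*(\alpha(x))\varphi(y)=\varphi(y\apr\alpha(x))$ and $\varphi(x)L^*(\alpha(y))=\varphi(x\apr\alpha(y))$. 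After applying $\varphi^{-1}$ (legitimate since $\varphi$ is a bijection) the equation becomes precisely the defining relation for $\alpha$ being an \tto $\calo$-operator on $(A,L,R)$ with \bop $\beta$ of \bwt $\kappa$, i.e.\ Eq.~(\mref{eq:alphak}). The self-adjointness of $\beta$ enters to guarantee, via Lemma~\mref{le:frosy}, that $\tilde\beta$ really is a symmetric tensor and a balanced $A$-bimodule homomorphism, so that the \bop hypothesis is well-posed on the dual side.

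For part~(\mref{it:skewadjoint}), the first task is to record that $\alpha$ skew-adjoint forces $\tilde\alpha$ to be a skew-symmetric tensor: for $x=\varphi^{-1}(a^*),y=\varphi^{-1}(b^*)$ one computes $\langle\tilde\alpha(a^*),b^*\rangle=B(\alpha(x),y)=-B(x,\alpha(y))=-\langle a^*,\tilde\alpha(b^*)\rangle$, the mirror of the symmetric computation in Lemma~\mref{le:frosy}. With $\tilde\alpha$ skew-symmetric and $\tilde\beta$ symmetric, the pair $r_\pm=\tilde\alpha\pm\tilde\beta$ has $\tilde\alpha$ and $\tilde\beta$ as its skew-symmetric and symmetric parts in the sense of Eq.~(\mref{eq:alphabeta}). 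Invariance of $\tilde\beta$ follows from Corollary~\mref{co:frosy1} together with the hypothesis that $\beta$ is a balanced $A$-bimodule homomorphism. Then subclaims (\mref{it:fk2})--(\mref{it:fk3}) fall out by feeding this data into Theorem~\mref{thm:aybea}(\mref{it:aybea}) and Corollary~\mref{co:aybea}: part~(\mref{it:fk2}) is Theorem~\mref{thm:aybea}(\mref{it:aybea}) applied to $r_\pm$ via part~(\mref{it:tildealpha}), part~(\mref{it:fk1}) is the specialization $\kappa=-1$ where the \ewt $\frac{\kappa+1}{4}$ vanishes so the \MAYBE collapses to the ordinary AYBE, and part~(\mref{it:fk3}) is the case $\kappa=0$, $\beta=0$, where the \tto $\calo$-operator degenerates to an ordinary $\calo$-operator of weight zero on $(A,L,R)$, i.e.\ a Rota-Baxter operator, and one invokes Corollary~\mref{co:aybea}(\mref{it:aybss}) for the skew-symmetric $\tilde\alpha$.

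The main obstacle I anticipate is purely bookkeeping rather than conceptual: verifying that the two intertwining identities for $\varphi$ hold with the \emph{correct} left/right placement of $L^*$ and $R^*$, since a sign or side error there would misidentify which of $r_+$ or $r_-$ corresponds to which product $\rpr_\pm$. I would therefore carry out the substitution in part~(\mref{it:tildealpha}) once, carefully, pairing against a test element $c^*=\varphi(z)$ and reducing both sides to expressions in $B(\cdot,\cdot)$, so that the symmetry and invariance of $B$ can be applied transparently; once that single dictionary is fixed, every subsequent claim is a direct citation of an earlier result with no further computation.
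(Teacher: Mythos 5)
Your proposal follows essentially the same route as the paper's own proof: part (\mref{it:tildealpha}) by transporting the defining identity through $\varphi$ using the two invariance identities (the paper's Eqs.~(\mref{eq:invariant1})--(\mref{eq:invariant2})) together with Lemma~\mref{le:frosy} for the \bop, and part (\mref{it:skewadjoint}) by the skew-symmetry computation for $\tilde\alpha$ followed by citations of Theorem~\mref{thm:aybea} and Corollary~\mref{co:aybea}. There is, however, one concrete slip to repair: your displayed rewriting $R^*(\alpha(x))\varphi(y)=\varphi(y\apr\alpha(x))$ contradicts the identity you correctly stated one sentence earlier, $\varphi(y\apr x)=R^*(y)\varphi(x)$; the correct instance is $R^*(\alpha(x))\varphi(y)=\varphi(\alpha(x)\apr y)$. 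Only with this placement does the substitution $a^*=\varphi(x)$, $b^*=\varphi(y)$ reduce the dual-side condition to Eq.~(\mref{eq:alphak}); as written it would instead yield
\begin{equation}
\alpha(x)\apr\alpha(y)-\alpha\big(y\apr\alpha(x)+x\apr\alpha(y)\big)=\kappa\,\beta(x)\apr\beta(y),
\notag
\end{equation}
which is not the \tto $\calo$-operator identity unless $A$ is commutative --- precisely the left/right hazard you yourself flagged, and immediately fixable from your own dictionary. Two minor points: the invariance of $\tilde\beta$ is most directly obtained from Lemma~\mref{le:frosy} combined with Lemma~\mref{le:syin} (Corollary~\mref{co:frosy1}, which you cite, runs in the opposite direction, from a symmetric tensor to an endomorphism, though it carries equivalent content); and in (\mref{it:fk3}) you need not set $\beta=0$ --- when $\kappa=0$ the \bop simply drops out of the defining equation, and one applies Corollary~\mref{co:aybea}(\mref{it:aybss}) to the skew-symmetric $\tilde\alpha$ directly, exactly as the paper does.
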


\begin{proof}
(\mref{it:tildealpha})
Since $B$ is symmetric and invariant, for any $x,y,z\in A$, we have
\begin{equation}
B(x\apr y,z)=B(x,y\apr z)\Leftrightarrow\langle\varphi(x\apr y),
z\rangle=\langle\varphi(x),y\apr z\rangle\Leftrightarrow\varphi(x
R(y))=\varphi(x)L^*(y),\mlabel{eq:invariant1}
\end{equation}
\begin{equation}
B(z,x\apr y)=B(y\apr z,x)\Leftrightarrow\langle\varphi(z),x\apr
y\rangle=\langle\varphi(y\apr z),x\rangle\Leftrightarrow
R^*(y)\varphi(z)=\varphi(L(y)z).\mlabel{eq:invariant2}
\end{equation}
On the other hand, since $\varphi$ is invertible, for any
$a^*,b^*\in A^*$, there exist $x,y\in A$ such that
$\varphi(x)=a^*,\varphi(y)=b^*$. So according to
Eq.~(\mref{eq:invariant1}) and Eq.~(\mref{eq:invariant2}), the
equation
$$\tilde{\alpha}(a^*)\apr\tilde{\alpha}(b^*)-\tilde{\alpha}(\varphi(\tilde{\alpha}(a^*)\apr
\varphi^{-1}(b^*)+\varphi^{-1}(a^*)\apr\tilde{\alpha}(b^*)))=\kappa \tilde{\beta}(a^*)\apr\tilde{\beta}(b^*),$$
 is equivalent to
$$\tilde{\alpha}(a^*)\apr\tilde{\alpha}(b^*)-\tilde{\alpha}(R^*(\tilde{\alpha}(a^*))b^*
+a^*L^*(\tilde{\alpha}(b^*)))=\kappa \tilde{\beta}(a^*)\apr\tilde{\beta}(b^*).$$

By Lemma~\mref{le:frosy}, $\beta:A\rightarrow A$ is a balanced
$A$-bimodule homomorphism if and only if
$\tilde{\beta}:A^*\rightarrow A$ is a balanced $A$-bimodule
homomorphism. So $\alpha$ is an \tto $\calo$-operator with \bop
$\beta$ of \bwt $\kappa$ if and only if $\tilde{\alpha}$ is an
\tto $\calo$-operator with \bop $\tilde{\beta}$ of \bwt $\kappa$.
\smallskip

\noindent
(\mref{it:skewadjoint})
If $\alpha$ is skew-adjoint with
respect to $B$, then $$\langle
\alpha(x),\varphi(y)\rangle+\langle\varphi(x),\alpha(y)\rangle=0,\;\;\forall
x,y\in A.$$ Hence $\langle\tilde{\alpha}(a^*),b^*\rangle+\langle
a^*,\tilde{\alpha}(b^*)\rangle=0$ for any $a^*,b^*\in A^*$. So
$\tilde{\alpha}$ regarded as an element of $A\otimes A$ is
skew-symmetric.
\smallskip

Then by Theorem~\mref{thm:aybea}, Item~(\mref{it:fk2}) holds. By
Corollary~\mref{co:aybea} Item~(\mref{it:fk1}) and
Item~(\mref{it:fk3}) hold.
\end{proof}

\begin{coro}
Let $\bfk$ be a field of characteristic not equal to 2. Let $A$ be a $\bfk$-algebra of finite $\bfk$-dimension and let $r\in A\otimes A$. Define
$\alpha, \beta\in A\otimes A$ by Eq.~$($\mref{eq:alphabeta}$)$. Then
$r=\alpha+\beta$. Let $B:A\otimes A\to\textbf{k}$ be a nondegenerate
symmetric and invariant bilinear form. Define the linear map
$\varphi:A\to A^*$ by Eq.~$($\mref{eq:definelinearmap}$)$.
\begin{enumerate}
\item Suppose that $\beta\in A\otimes A$ is invariant. Then
$r$ is a solution of the \MAYBE of \ewt $\frac{\kappa+1}{4}$ if and
only if $\hat{\alpha}=\alpha\varphi: A\to A$ is an \tto
$\calo$-operator with \bop $\hat{\beta}=\beta\varphi:A\to A$ of
\type $k$.
\item Suppose that $\beta\in A\otimes A$ is invariant. Then $r$ is
a solution of the AYBE if and only if $\hat{\alpha}=\alpha\varphi:
A\to A$ is an \tto $\calo$-operator with \bop
$\hat{\beta}=\beta\varphi:A\to A$ of \type $-1$. In particularly, if
in addition, $\beta=0$, i.e., $r$ is skew-symmetric, then $r$ is a
solution of the AYBE if and only if $\hat{\alpha}=\hat
r=r\varphi:A\rightarrow A$  is a Rota-Baxter operator of weight
zero.
\end{enumerate}
\mlabel{co:equivalence1}
\end{coro}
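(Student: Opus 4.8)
The plan is to read this corollary as the ``tensor-side'' restatement of Theorem~\mref{thm:equivalence}, obtained by feeding into that theorem the two endomorphisms $\hat\alpha=\alpha\varphi$ and $\hat\beta=\beta\varphi$ of $A$. The first thing I would record is the elementary bookkeeping identity: under the identification $A\ot A\cong\Hom_\bfk(A^*,A)$ in which $\alpha$ and $\beta$ are the skew-symmetric and symmetric parts of $r$, one has $\widetilde{\hat\alpha}:=\hat\alpha\circ\varphi^{-1}=\alpha\varphi\varphi^{-1}=\alpha$ and likewise $\widetilde{\hat\beta}=\beta$. Consequently the tensor $r_+=\widetilde{\hat\alpha}+\widetilde{\hat\beta}$ produced by Theorem~\mref{thm:equivalence} is exactly $\alpha+\beta=r$; it is this observation (that it is the plus-sign tensor, not $-r^t$, that reproduces $r$) that lets me transcribe statements about $r$ directly. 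Since $\varphi$ is bijective ($B$ is nondegenerate and $A$ is finite-dimensional), all composites make sense.

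Before invoking the theorem I must verify its hypotheses for the pair $(\hat\alpha,\hat\beta)$. The self-adjointness of $\hat\beta$ and the skew-adjointness of $\hat\alpha$ with respect to $B$ are forced by the symmetry of $\beta$ and the skew-symmetry of $\alpha$ as tensors: writing $a^*=\varphi(x)$, $b^*=\varphi(y)$ and using that $B$ is symmetric, one computes $B(\hat\beta(x),y)=\langle a^*\ot b^*,\beta\rangle$ and $B(\hat\alpha(x),y)=\langle a^*\ot b^*,\alpha\rangle$, so (skew-)symmetry of the tensor is literally (skew-)adjointness of the operator. This is the computation already performed in the proof of Corollary~\mref{co:frosy1}, which I would cite rather than redo. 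The remaining, and only substantive, hypothesis is that $\hat\beta$ be a balanced $A$-bimodule homomorphism (otherwise the phrase ``\tto $\calo$-operator with \bop $\hat\beta$'' is not even defined); this is where the standing assumption that $\beta$ is invariant is used. By Lemma~\mref{le:syin}, invariance of the symmetric tensor $\beta$ is equivalent to $\beta:A^*\to A$ being a balanced $A$-bimodule homomorphism, and by Corollary~\mref{co:frosy1} this is equivalent to $\hat\beta=\beta\varphi$ being one.

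With the hypotheses in hand, part~(1) is immediate from Item~(\mref{it:fk2}) of Theorem~\mref{thm:equivalence}: since $\widetilde{\hat\alpha}+\widetilde{\hat\beta}=\alpha+\beta=r$, the tensor $r$ solves the \MAYBE of \ewt $\frac{\kappa+1}{4}$ if and only if $\hat\alpha$ is an \tto $\calo$-operator with \bop $\hat\beta$ of \bwt $\kappa$. For part~(2) I would set $\kappa=-1$, so that the weight $\frac{\kappa+1}{4}$ vanishes and the \MAYBE collapses to the AYBE; equivalently this is Item~(\mref{it:fk1}). Finally, in the skew-symmetric special case $\beta=0$ one has $\hat\beta=0$, the \bwt becomes irrelevant, and an \tto $\calo$-operator with zero \bop is just an $\calo$-operator associated to $(A,L,R)$, i.e.\ a Rota-Baxter operator of weight zero; this is precisely Item~(\mref{it:fk3}) of Theorem~\mref{thm:equivalence}.

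I do not anticipate a genuine obstacle: the whole content is the translation dictionary, supplied by Lemma~\mref{le:syin} and Corollary~\mref{co:frosy1}, between symmetric/skew-symmetric tensors and self-/skew-adjoint operators and between invariance and the balanced-homomorphism condition; once this is installed the corollary is a formal specialization of Theorem~\mref{thm:equivalence}. The only points requiring care are the bookkeeping ones already flagged, namely tracking whether $\varphi$ or $\varphi^{-1}$ is applied and confirming that it is $r_+$ rather than $r_-$ that equals $r$, so that the signs in the weight $\frac{\kappa+1}{4}$ and in the \bop $\hat\beta=\beta\varphi$ are the ones appearing in the statement.
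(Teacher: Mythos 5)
Your proposal is correct and takes essentially the same route as the paper: establish that $\hat{\alpha}=\alpha\varphi$ is skew-adjoint and $\hat{\beta}=\beta\varphi$ is self-adjoint with respect to $B$ (the computation from the proof of Corollary~\mref{co:frosy1}) and then specialize Theorem~\mref{thm:equivalence}. Your additional bookkeeping---that $\widetilde{\hat{\alpha}}=\alpha$ and $\widetilde{\hat{\beta}}=\beta$ so that $r_+=r$, and that invariance of $\beta$ supplies the balanced $A$-bimodule homomorphism hypothesis via Lemma~\mref{le:syin} and Corollary~\mref{co:frosy1}---merely makes explicit what the paper's terse proof leaves implicit.
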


\begin{proof}
Since $\alpha\in A\otimes A$ is skew-symmetric, for any $x,y\in A$,
\begin{eqnarray*}
&&\langle \alpha,\varphi(x)\otimes\varphi(y)\rangle =-\langle\alpha,\varphi(y)\otimes\varphi(x)\rangle\\
&\Leftrightarrow&
\langle\alpha(\varphi(x)),\varphi(y)\rangle= -\langle\alpha(\varphi(y)),\varphi(x)\rangle \\
&\Leftrightarrow&
B(\hat{\alpha}(x),y)=-B(\hat{\alpha}(y),x).
\end{eqnarray*}
Thus
$\hat{\alpha}=\alpha\varphi$ is skew-adjoint with respect to
$B(\;,\;)$. On the other hand, from the proof of
Corollary~\mref{co:frosy1}, we show that $\hat{\beta}=\beta\varphi$
is self-adjoint with respect to $B(\;,\;)$. So the conclusion
follows from Theorem~\mref{thm:equivalence}.
\end{proof}

\subsection{\Tto $\calo$-operators in general and \MAYBE}
\mlabel{ss:goaybe}

We now
establish the relationship between an \tto $\calo$-operator $\alpha: V\to A$ in general and the AYBE. For this purpose we prove
that an \tto $\calo$-operator $\alpha:V\to A$ naturally gives rise
to an \tto $\calo$-operator on a larger associative algebra $\hat
A$ associated to the dual bimodule $({\hat A}^*, R^*_{\hat A},
L^*_{\hat A})$. We first introduce some notations.

\begin{defn}
Let $A$ be a $\bfk$-algebra and let $(V,\ell,r)$ be an $A$-bimodule, both with finite $\bfk$-dimension.
Let $(V^*,r^*,\ell^*)$ be the dual $A$-bimodule and let $\hat A=
A\ltimes_{r^*,\ell^*}V^*$. Identify a linear map
$\gamma:V\rightarrow A$ as an element in $\hat A\otimes \hat A$
through the injective map
\begin{equation}
\Hom_\bfk(V,A) \cong A \ot V^* \hookrightarrow \hat{A} \ot
\hat{A}.
\mlabel{eq:id2}
\end{equation}
Denote
\begin{equation}
\tilde{\gamma}_\pm:=\gamma\pm \gamma^{21},
\mlabel{eq:syco}
\end{equation}
where $\gamma^{21}=\sigma(\gamma) \in V^* \ot A \subset \hat{A}
\ot \hat{A}$ with $\sigma:A\ot V^* \to V^* \ot A, a\ot u^* \to u^*\ot a,$ being the switch operator. \mlabel{de:tilde}
\end{defn}

\begin{lemma}
Let $A$ be a $\bfk$-algebra and let $(V,\ell,r)$ be an $A$-bimodule, both with finite $\bfk$-dimension.
Suppose that $\beta:V\rightarrow A$ is a linear map which is
identified as an element in $\hat A\otimes \hat A$ by Eq.~$($\mref{eq:id2}$)$. Define
$\tilde{\beta}_+$ by Eq.~$($\mref{eq:syco}$)$. Then $\tilde{\beta}_+$, identified as a linear map from ${\hat A}^*$ to $\hat A$, is a balanced
$\hat A$-bimodule homomorphism from $({\hat
A}^*,R^*_{\hat A}, L^*_{\hat A})$ to $(\hat{A}, L_{\hat{A}},R_{\hat{A}})$ if and only if $\beta:V\to A$ is a balanced $A$-bimodule
homomorphism from $(V,\ell,r)$ to $(A,L_A,R_A)$. \mlabel{le:syco}
\end{lemma}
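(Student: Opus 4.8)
The plan is first to make the map $\tilde{\beta}_+\colon \hat A^*\to \hat A$ fully explicit, and then to exploit the fact that $\tilde{\beta}_+$ is symmetric by construction, so that Lemma~\mref{le:syin} collapses the statement to a single identity. Since $\hat A=A\oplus V^*$ as a $\bfk$-module and everything is finite dimensional, $\hat A^*\cong A^*\oplus V$ (via $V^{**}\cong V$), with pairing $\langle (\phi,v),(x,\xi^*)\rangle=\langle \phi,x\rangle+\langle \xi^*,v\rangle$. Writing $\beta=\sum_i a_i\ot w_i^*\in A\ot V^*\subseteq \hat A\ot \hat A$, so that $\tilde{\beta}_+=\sum_i(a_i\ot w_i^*+w_i^*\ot a_i)$, a direct computation from $\langle \xi,\tilde{\beta}_+(\eta)\rangle=\langle \eta\ot \xi,\tilde{\beta}_+\rangle$ gives $\tilde{\beta}_+(\phi,v)=(\beta(v),\beta^*(\phi))$, where $\beta^*\colon A^*\to V^*$ is the transpose of $\beta$, defined by $\langle \beta^*(\phi),w\rangle=\langle \phi,\beta(w)\rangle$. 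Thus $\tilde{\beta}_+$ is ``block anti-diagonal'': it carries $V\subseteq \hat A^*$ into $A\subseteq \hat A$ by $\beta$ and $A^*\subseteq \hat A^*$ into $V^*\subseteq \hat A$ by $\beta^*$.

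Next I would record the two structures of $\hat A$ that are needed. As $V^*$ has the zero product, Corollary~\mref{co:twoalg} gives the multiplication $(x_1,\xi_1^*)\sempr(x_2,\xi_2^*)=(x_1\apr x_2,\,r^*(x_1)\xi_2^*+\xi_1^*\ell^*(x_2))$, and from the definition of the dual bimodule together with $\langle R^*_{\hat A}(z)\eta,w\rangle=\langle \eta,w\sempr z\rangle$ one computes the left $\hat A$-action on $\hat A^*$: the $V$-component of $R^*_{\hat A}(x,\xi^*)(\phi,v)$ is $\ell(x)v$, while its $A^*$-component is the functional $y\mapsto \langle \phi,yx\rangle+\langle \xi^*,vr(y)\rangle$.

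The reduction that avoids most of the work is the following. Because $\tilde{\beta}_+$ is symmetric, Lemma~\mref{le:syin} applied to $\hat A$ in place of $A$ shows that being invariant, balanced, and an $\hat A$-bimodule homomorphism are equivalent; inspecting that proof, each one-sided homomorphism identity is already equivalent to invariance, since the two ways of expanding $\langle (\mathrm{id}\ot L(x)-R(x)\ot \mathrm{id})\tilde{\beta}_+,a^*\ot b^*\rangle$ isolate the left and right identities separately. Hence it suffices to verify the single left-homomorphism identity $\tilde{\beta}_+\bigl(R^*_{\hat A}(z)\eta\bigr)=z\sempr\tilde{\beta}_+(\eta)$ for all $z\in \hat A$, $\eta\in \hat A^*$; its validity is then automatically equivalent to $\tilde{\beta}_+$ being a \emph{balanced} $\hat A$-bimodule homomorphism.

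Finally I would test this identity on the four cases $z\in A$ or $z\in V^*$ against $\eta\in A^*$ or $\eta\in V$, substituting the explicit formulas above. The case $(z,\eta)\in (A,V)$ yields $\beta(\ell(x)v)=x\apr\beta(v)$ (the left $A$-module property); the case $(A,A^*)$ yields, after dualizing, $\beta(vr(x))=\beta(v)\apr x$ (the right $A$-module property); the case $(V^*,V)$ yields $\ell(\beta(v))w=vr(\beta(w))$ (the balanced condition of Eq.~\mref{eq:ksy}); and the case $(V^*,A^*)$ is vacuous. Therefore the left-homomorphism identity for $\tilde{\beta}_+$ holds if and only if $\beta$ is simultaneously a left and right $A$-module map and balanced, that is, a balanced $A$-bimodule homomorphism, which is the claim. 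The only genuine obstacle is bookkeeping: correctly carrying the variances through the chain $\Hom_\bfk(V,A)\cong A\ot V^*\hookrightarrow \hat A\ot \hat A$ and $\hat A^*\cong A^*\oplus V$ and computing $R^*_{\hat A}$ and $\beta^*$ without sign or side errors. Once these are pinned down, each of the four cases collapses to one of the defining identities by inspection.
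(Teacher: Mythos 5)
Your proposal is correct, and it reorganizes the paper's argument in a way worth noting. The paper proves the forward direction by verifying the \emph{balanced} identity $R^*_{\hat A}(\tilde{\beta}_+(a^*+u))(b^*+v)=(a^*+u)L^*_{\hat A}(\tilde{\beta}_+(b^*+v))$ in full — eight separate pairing computations against elements of $A$ and $V^*$ — and only then invokes the symmetry of $\tilde{\beta}_+$ together with Lemma~\mref{le:syin} to upgrade balancedness to the full balanced $\hat A$-bimodule homomorphism property; the converse is handled by a separate specialization of all three conditions. You instead verify the single one-sided identity $\tilde{\beta}_+(R^*_{\hat A}(z)\eta)=z\sempr\tilde{\beta}_+(\eta)$, justified by the observation (correctly extracted from the \emph{proof} of Lemma~\mref{le:syin}, not its statement) that for a symmetric tensor each one-sided homomorphism identity is by itself equivalent to invariance, hence to the full balanced-homomorphism package: indeed the displayed pairing $\langle(\id\otimes L(x)-R(x)\otimes\id)s,a^*\otimes b^*\rangle$ equals both $\langle x\apr s(a^*)-s(R^*(x)a^*),b^*\rangle$ and $\langle s(L^*(x)b^*)-s(b^*)\apr x,a^*\rangle$. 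This buys you two things: only four component cases instead of eight, and — since each case is an \emph{equivalence} ($(A,V)\Leftrightarrow$ left module map, $(A,A^*)\Leftrightarrow$ right module map after dualizing, $(V^*,V)\Leftrightarrow$ Eq.~(\mref{eq:ksy}), $(V^*,A^*)$ vacuous; I checked all four against the semidirect product structure and they are right) — both directions of the lemma in a single pass, with no separate converse argument. The one point to make explicit in a final write-up is precisely that refinement of Lemma~\mref{le:syin}, since citing the lemma as stated would only give you the equivalence of invariance with the \emph{conjunction} of the two homomorphism identities, not with either one alone.
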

\begin{proof}
For the linear map $\tilde{\beta}_+:\hat{A}^*\to \hat{A}$, we have
$\tilde{\beta}_+(a^*)=\beta^*(a^*)$ for $a^*\in A^*$ and
$\tilde{\beta}_+(u)=\beta(u)$ for $u\in V$, where $\beta^*:A^*\rightarrow V^*$ is
the dual linear map associated to $\beta$ given by
$$\langle \beta^*(a^*), v\rangle=\langle a^*,\beta(v)\rangle,\;\;\forall
a^*\in A^*, v\in V.$$ First suppose that $\beta:(V,\ell,r)\to A$ is a balanced $A$-bimodule homomorphism. Let
$b^*\in A^*, v\in V$, then
$$R_{\hat A}^*(\tilde{\beta}_+(a^*+u))(b^*+v)=R_{\hat A}^*(\beta^*(a^*))b^*+R_{\hat A}^*(\beta^*(a^*))v+R_{\hat A}^*(\beta(u))b^*+R_{\hat A}^*(\beta(u))v,$$
$$(a^*+u)L_{\hat A}^*(\tilde{\beta}_+(b^*+v))=a^*L_{\hat
A}^*(\beta^*(b^*))+a^*L_{\hat A}^*(\beta(v))+uL_{\hat
A}^*(\beta^*(b^*))+uL_{\hat A}^*(\beta(v)).$$ On the other hand, for
any $x\in A$, $w^*\in V^*$,
{\allowdisplaybreaks
{\small \begin{eqnarray*} &&\langle R_{\hat
A}^*(\beta^*(a^*))b^*-a^*L_{\hat A}^*(\beta^*(b^*)),x\rangle
=\langle b^*,x\apr\beta^*(a^*)\rangle
-\langle a^*,\beta^*(b^*)\apr x\rangle =0,\\
&&\langle R_{\hat A}^*(\beta^*(a^*))b^*-a^*L_{\hat
A}^*(\beta^*(b^*)),w^*\rangle =\langle
b^*,w^*\apr\beta^*(a^*)\rangle -\langle a^*,\beta^*(b^*)\apr
w^*\rangle =0,\\
& &\langle R_{\hat A}^*(\beta^*(a^*))v-a^*L_{\hat
A}^*(\beta(v)),x\rangle =\langle v,x\apr\beta^*(a^*)\rangle
-\langle a^*,\beta(v)\apr x\rangle =\langle
a^*,\beta(vr(x))-\beta(v)\apr x\rangle =0,\\
&& \langle R_{\hat A}^*(\beta^*(a^*))v-a^*L_{\hat
A}^*(\beta(v)),w^*\rangle =\langle
v,w^*\apr\beta^*(a^*)\rangle-\langle a^*,\beta(v)\apr w^*\rangle
=0,\\
& &\langle R_{\hat A}^*(\beta(u))b^*-uL_{\hat
A}^*(\beta^*(b^*)),x\rangle =\langle b^*,x\apr\beta(u)\rangle
-\langle u,\beta^*(b^*)\apr x\rangle =\langle
b^*,x\apr\beta(u)-\beta(l(x)u)\rangle =0,\\
 && \langle R_{\hat
A}^*(\beta(u))b^*-uL_{\hat A}^*(\beta^*(b^*)),w^*\rangle =\langle
b^*,w^*\apr\beta(u)\rangle-\langle
u,\beta^*(b^*)\apr w^*\rangle =0.\\
&&\langle R_{\hat A}^*(\beta(u))v-uL_{\hat A}^*(\beta(v)),w^*\rangle
= \langle v,w^*\apr\beta(u)\rangle -\langle u,\beta(v)\apr
w^*\rangle =\langle \ell(\beta(u))v-ur(\beta(v)),w^*\rangle =0,\\
&&\langle R_{\hat A}^*(\beta(u))v-uL_{\hat A}^*(\beta(v)),x\rangle
=\langle v,x\apr\beta(u)\rangle -\langle u,\beta(v)\apr x\rangle
=0.
\end{eqnarray*}}
}
 Therefore, $R_{\hat
A}^*(\tilde{\beta}_+(a^*+u))(b^*+v)=(a^*+u)L_{\hat
A}^*(\tilde{\beta}_+(b^*+v))$. Since $\tilde{\beta}_{+}\in \hat{A}\otimes\hat{A}$ is symmetric, by
Lemma~\mref{le:syin}, $\tilde{\beta}_+$, identified as a linear map from ${\hat A}^*$ to $\hat A$, is a balanced
$\hat A$-bimodule homomorphism from $({\hat
A}^*,R^*_{\hat A}, L^*_{\hat A})$ to $(\hat{A}, L_{\hat{A}},R_{\hat{A}})$.

Conversely, if $\tilde{\beta}_+$, identified as a linear map from ${\hat A}^*$ to $\hat A$, is a balanced
$\hat A$-bimodule homomorphism from $({\hat
A}^*,R^*_{\hat A}, L^*_{\hat A})$ to $(\hat{A}, L_{\hat{A}},R_{\hat{A}})$, then for any $u,v\in V,x\in A$,
\begin{eqnarray*}
R_{\hat{A}}^*(\tilde{\beta}_{+}(u))v =uL_{\hat{A}}^*(\tilde{\beta}_{+}(v))&\Leftrightarrow&
\ell(\beta(u))v=ur(\beta(v)),\\
\tilde{\beta}_{+}(R_{\hat{A}}^*(x)v) =x\apr\tilde{\beta}_{+}(v) &\Leftrightarrow & \beta(\ell(x)v)=x\apr\beta(v),\\
\tilde{\beta}_{+}(uL_{\hat{A}}^*(x))=\tilde{\beta}_{+}(u)\apr
x&\Leftrightarrow&\beta(ur(x))=\beta(u)\apr x.
\end{eqnarray*}
  So
$\beta:(V,\ell,r)\to (A,L_A,R_A)$ is a balanced $A$-bimodule homomorphism.
\end{proof}

\begin{theorem}
Let $A$ be a $\bfk$-algebra and let $(V,\ell,r)$ be an $A$-bimodule, both with finite $\bfk$-dimension. Let
$\alpha,\beta:V\to A$ be two $\bfk$-linear maps.
Let $\tilde{\alpha}_-$ and $\tilde{\beta}_+$ be defined by Eq.~$($\mref{eq:id2}$)$ and identified as linear maps from $\hat{A}^*$ to $\hat{A}$.
Then $\alpha$ is an \tto
$\calo$-operator with \bop $\beta$ of \bwt $\kappa$ if and only if
$\tilde{\alpha}_-$ is an \tto $\calo$-operator with \bop $\tilde{\beta}_+$ of \bwt $\kappa$. \mlabel{thm:skewgm}
\end{theorem}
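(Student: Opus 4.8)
The plan is to separate the two requirements hidden in the phrase ``\tto $\calo$-operator with \bop $\ldots$'': that the \bop be a balanced bimodule homomorphism, and that the symmetrizer satisfy the operator identity Eq.~(\mref{eq:gmybe}). The first requirement is already handled: Lemma~\mref{le:syco} asserts that $\tilde{\beta}_+$ is a balanced $\hat{A}$-bimodule homomorphism from $({\hat A}^*,R^*_{\hat A},L^*_{\hat A})$ to $(\hat{A},L_{\hat A},R_{\hat A})$ if and only if $\beta$ is a balanced $A$-bimodule homomorphism from $(V,\ell,r)$ to $(A,L_A,R_A)$. Under Definition~\mref{de:mop} these balanced-homomorphism conditions are exactly what it means for $\beta$ (resp. $\tilde{\beta}_+$) to be an admissible \bop, so it remains only to show that, assuming $\beta$ is a balanced $A$-bimodule homomorphism, the operator identity for $\tilde{\alpha}_-$ on $\hat{A}$ holds if and only if the one for $\alpha$ on $A$ does.

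First I would make the two maps explicit on $\hat{A}^*=A^*\oplus V$. As in the proof of Lemma~\mref{le:syco}, the identification Eq.~(\mref{eq:id2}) gives $\tilde{\beta}_+(a^*)=\beta^*(a^*)\in V^*$ and $\tilde{\beta}_+(v)=\beta(v)\in A$, where $\beta^*:A^*\to V^*$ is the transpose; the antisymmetrization Eq.~(\mref{eq:syco}) then yields $\tilde{\alpha}_-(a^*)=\alpha^*(a^*)\in V^*$ but $\tilde{\alpha}_-(v)=-\alpha(v)\in A$, the minus sign coming from the $\alpha^{21}$ summand. Next I would read off the dual $\hat{A}$-bimodule actions on $\hat{A}^*$ from the multiplication of $\hat{A}=A\ltimes_{r^*,\ell^*}V^*$ (Corollary~\mref{co:twoalg}) together with Proposition~\mref{pp:dual}. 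Using the pairing $\langle a^*+v,\,x+w^*\rangle=\langle a^*,x\rangle+\langle w^*,v\rangle$, a short computation gives the facts I need: for $x\in A$ and $v\in V$ one has $R^*_{\hat A}(x)v=\ell(x)v$ and $vL^*_{\hat A}(x)=vr(x)$, both lying in $V$, whereas any product pairing an element of $A\subseteq\hat A$ with one of $A^*$ or $V^*$ under $R^*_{\hat A},L^*_{\hat A}$ lands in the $A^*$ summand.

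With these formulas I would expand Eq.~(\mref{eq:gmybe}) (with zero multiplication and \bwt $\kappa$) for $\tilde{\alpha}_-$ over the four bilinear cases $(\xi,\eta)\in\{A^*,V\}^2$. The case $\xi=a^*,\eta=b^*$ is vacuous: $\tilde{\alpha}_-(a^*)\apr\tilde{\alpha}_-(b^*)=0$ since $V^*\apr V^*=0$ in $\hat A$, the correction term vanishes likewise, and $\tilde{\beta}_+(a^*)\apr\tilde{\beta}_+(b^*)=0$, so the identity reduces to $0=0$. The case $\xi=u,\eta=v$ with $u,v\in V$ is the decisive one: here $\tilde{\alpha}_-(u)\apr\tilde{\alpha}_-(v)=\alpha(u)\apr\alpha(v)$, while $R^*_{\hat A}(\tilde{\alpha}_-(u))v+uL^*_{\hat A}(\tilde{\alpha}_-(v))=-\big(\ell(\alpha(u))v+ur(\alpha(v))\big)\in V$, so applying $\tilde{\alpha}_-=-\alpha$ on $V$ produces $+\alpha(\ell(\alpha(u))v+ur(\alpha(v)))$; together with the right-hand side $\kappa\beta(u)\apr\beta(v)$ this is literally Eq.~(\mref{eq:gmybe}) for $\alpha$.

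Finally I would dispose of the two mixed cases, where the \bop hypothesis is used. In each the whole identity lands in the $V^*$ summand, so I would pair it against an arbitrary $w\in V$ and use $\langle\alpha^*(a^*),p\rangle=\langle a^*,\alpha(p)\rangle$ to collect everything into a single bracket. For $\xi=a^*,\eta=v$ this collapses to $\langle a^*,\ \alpha(v)\apr\alpha(w)-\alpha(\ell(\alpha(v))w+vr(\alpha(w)))\rangle=\kappa\langle a^*,\beta(\ell(\beta(v))w)\rangle$, and the homomorphism relation $\beta(\ell(\beta(v))w)=\beta(v)\apr\beta(w)$ turns the right side into $\kappa\langle a^*,\beta(v)\apr\beta(w)\rangle$; since $a^*$ and $w$ are arbitrary, this case is equivalent to Eq.~(\mref{eq:gmybe}) for $\alpha$. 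The case $\xi=u,\eta=b^*$ is entirely symmetric, now using $\beta(wr(\beta(u)))=\beta(w)\apr\beta(u)$. Thus all four cases reduce to the single identity for $\alpha$, which together with Lemma~\mref{le:syco} yields the asserted equivalence. I expect the only real difficulty to be the bookkeeping of the restricted actions $R^*_{\hat A},L^*_{\hat A}$ on the summands of $\hat{A}^*$ and the careful tracking of the sign introduced by $\tilde{\alpha}_-=\alpha-\alpha^{21}$ through the mixed cases; everything else is formal bilinear algebra.
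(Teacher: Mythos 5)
Your proposal is correct and follows essentially the same route as the paper's proof: the explicit formulas $\tilde{\alpha}_-(a^*)=\alpha^*(a^*)$, $\tilde{\alpha}_-(v)=-\alpha(v)$, the appeal to Lemma~\ref{le:syco} for the modification, and the reduction of the mixed components to Eq.~(\ref{eq:gmybe}) by pairing against $w\in V$ and invoking $\beta(\ell(\beta(v))w)=\beta(v)\apr\beta(w)$ and $\beta(wr(\beta(u)))=\beta(w)\apr\beta(u)$ are exactly the paper's steps. Your only departure is cosmetic: you organize the single expansion over general elements $u+a^*$, $v+b^*$ into four bilinear cases (which also cleanly covers $\kappa=0$, handled separately in the paper), but the underlying computation is identical.
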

\begin{proof}
Note that for any $a^*\in A^*,v\in V$,
$\tilde{\alpha}_-(a^*)=\alpha^*(a^*)$ and
$\tilde{\alpha}_-(v)=-\alpha(v)$, where $\alpha^*:A^*\rightarrow
V^*$ is the dual linear map of $\alpha$. Suppose that $\alpha$ is
an \tto $\calo$-operator with \bop $\beta$ of \bwt $\kappa$. Then
for any $a^*,b^*\in A^*$, $u,v\in V$, we have
\begin{eqnarray*}
&
&\tilde{\alpha}_-(u+a^*)\apr\tilde{\alpha}_-(v+b^*)-\tilde{\alpha}_-(R_{\hat
A}^*(\tilde{\alpha}_-(u+a^*))(v+b^*)+
(u+a^*)L_{\hat A}^*(\tilde{\alpha}_-(v+b^*)))\\
&=&\alpha(u)\apr\alpha(v)-\alpha(u)\apr\alpha^*(b^*)-\alpha^*(a^*)\apr\alpha(v)+
\alpha^*(a^*)\apr\alpha^*(b^*)\\
& &-\tilde{\alpha}_-(-R_{\hat A}^*(\alpha(u))v-R_{\hat
A}^*(\alpha(u))b^*+R_{\hat A}^*(\alpha^*(a^*))v+
R_{\hat A}^*(\alpha^*(a^*))b^*-uL_{\hat A}^*(\alpha(v))\\ & &-a^*L_{\hat A}^*(\alpha(v))+uL_{\hat A}^*(\alpha^*(b^*))+a^*L_{\hat A}^*(\alpha^*(b^*)))\\
&=&\alpha(u)\apr\alpha(v)-\alpha(\ell(\alpha(u))v)-\alpha(ur(\alpha(v)))
-r^*(\alpha(u))\alpha^*(b^*)+\alpha^*(R_{\hat A}^*(\alpha(u))b^*)\\
& &-\alpha^*(uL_{\hat A}^*(\alpha^*(b^*)))-
\alpha^*(a^*)\ell^*(\alpha(v))-\alpha^*(R_{\hat
A}^*(\alpha^*(a^*))v)+\alpha^*(a^*L_{\hat A}^*(\alpha(v))).
\end{eqnarray*}
On the other hand, for any $w\in V$ we have
\begin{eqnarray*}
& &\langle -r^*(\alpha(u))\alpha^*(b^*)+\alpha^*(R_{\hat
A}^*(\alpha(u))b^*)-
\alpha^*(uL_{\hat A}^*(\alpha^*(b^*))),w\rangle \\
&=&\langle
b^*,\alpha(w)\apr\alpha(u)-\alpha(\ell(\alpha(w))u+wr(\alpha(u))\rangle =\langle b^*,\kappa \beta(w)\apr\beta(u)\rangle \\
&=&\langle b^*,\kappa \beta(wr(\beta(u)))\rangle =\langle
\kappa r^*(\beta(u))\beta^*(b^*),w\rangle.
\end{eqnarray*}
Therefore $$-r^*(\alpha(u))\alpha^*(b^*)+\alpha^*(R_{\hat
A}^*(\alpha(u))b^*)- \alpha^*(uL_{\hat
A}^*(\alpha^*(b^*)))=\kappa r^*(\beta(u))\beta^*(b^*).$$ Similarly,
{\allowdisplaybreaks
\begin{eqnarray*}
& &\langle -\alpha^*(a^*)\ell^*(\alpha(v))-\alpha^*(R_{\hat A}^*(\alpha^*(a^*))v)
+\alpha^*(a^*L_{\hat A}^*(\alpha(v))),w\rangle \\
&=&\langle
a^*,\alpha(v)\apr\alpha(w)-\alpha(\ell(\alpha(v))w+vr(\alpha(w)))\rangle =\langle a^*,\kappa \beta(v)\apr\beta(w)\rangle \\
&=&\langle a^*,\kappa \beta(\ell(\beta(v))w)\rangle =\langle
\kappa \beta^*(a^*)\ell^*(\beta(v)),w\rangle.
\end{eqnarray*}
}
Hence $$-\alpha^*(a^*)\ell^*(\alpha(v))-\alpha^*(R_{\hat
A}^*(\alpha^*(a^*))v)+\alpha^*(a^*L_{\hat A}^*(\alpha(v)))=
\kappa \beta^*(a^*)\ell^*(\beta(v)).$$ So
\begin{eqnarray*}
&
&\tilde{\alpha}_-(u+a^*)\apr\tilde{\alpha}_-(v+b^*)-\tilde{\alpha}_-(R_{\hat
A}^*(\tilde{\alpha}_-(u+a^*))(v+b^*)+
(u+a^*)L_{\hat A}^*(\tilde{\alpha}_-(v+b^*)))\\
&=&\kappa \beta(u)\apr\beta(v)+\kappa r^*(\beta(u))\beta^*(b^*)+\kappa \beta^*(a^*)\ell^*(\beta(v))\\
&=&\kappa \beta(u)\apr\beta(v)+\kappa \beta(u)\apr\beta^*(b^*)+\kappa \beta^*(a^*)\apr\beta(v)
=\kappa \tilde{\beta}_+(u+a^*)\tilde{\beta}_+(v+b^*).
\end{eqnarray*}
If $\kappa =0$, then the above equation implies that
$\tilde{\alpha}_-$ is an $\calo$-operator of weight zero. If
$\kappa \neq 0$, then $\beta$ is a balanced $A$-bimodule
homomorphism, which, according to Lemma~\mref{le:syco}, implies
that $\tilde{\beta}_{+}$ from $({\hat A}^*,R^*_{\hat A}, L^*_{\hat
A})$ to $\hat A$ is a balanced $\hat A$-bimodule homomorphism. So
$\tilde{\alpha}_-$ is an \tto $\calo$-operator with \bop
$\tilde{\beta}_+$ of \bwt $\kappa$.

Conversely, if $\tilde{\alpha}_-$ is an \tto $\calo$-operator with
\bop $\tilde{\beta}_+$ of \bwt $\kappa$. If $\kappa \neq 0$, then
$\tilde{\beta}_{+}$ from $({\hat A}^*,R^*_{\hat A}, L^*_{\hat A})$
to $\hat A$ is a balanced $\hat A$-bimodule homomorphism, which by
Lemma~\mref{le:syco} implies that $\beta$ from $(V,\ell,r)$ to $A$
is a balanced $A$-bimodule homomorphism. Moreover, for any $u,v\in
V$ we have
\begin{equation}
\tilde{\alpha}_-(u)\apr\tilde{\alpha}_-(v)-\tilde{\alpha}_-(R_{\hat
A}^*(\tilde{\alpha}_-(u))v+ uL_{\hat
A}^*(\tilde{\alpha}_-(v)))=\kappa \tilde{\beta}_+(u)\tilde{\beta}_+(v),\mlabel{eq:biggeralpha}
\end{equation}
which implies that $\alpha$ is an \tto $\calo$-operator with \bop
$\beta$ of \bwt $\kappa$. If $\kappa =0$, then
Eq.~(\mref{eq:biggeralpha}) for $\kappa =0$ implies that $\alpha$
is an $\calo$-operator of weight zero. So the conclusion follows.
\end{proof}

By Theorem~\mref{thm:skewgm}, the results from the previous sections on $\calo$-operators on $A$ can be applied to general $\calo$-operators.

\begin{coro}
Let $A$ be a $\bfk$-algebra and let $V$ be an $A$-bimodule, both with finite $\bfk$-dimension.
\begin{enumerate}
 \item Suppose the characteristic of the field $\bfk$ is not 2.
Let $\alpha, \beta:V\rightarrow A$ be linear maps which are
identified as elements in
$(A\ltimes_{r^*,\ell^*}V^*)\otimes(A\ltimes_{r^*,\ell^*}V^*)$. Then
$\alpha$ is an \tto $\calo$-operator with \bop $\beta$ of \bwt $k$
if and only if $(\alpha-\alpha^{21})\pm (\beta+\beta^{21})$ is a
solution of the \MAYBE of \bwt $\frac{\kappa+1}{4}$ in
$A\ltimes_{r^*,\ell^*}V^*$. \mlabel{it:motoaybe6}
\item Let
$\alpha:V\rightarrow A$ be a linear map which is identified as an
element in
$(A\ltimes_{r^*,\ell^*}V^*)\otimes(A\ltimes_{r^*,\ell^*}V^*)$.
Then $\alpha$ is an $\calo$-operator of weight zero if and only if
$\alpha-\alpha^{21}$ is a skew-symmetric solution of the AYBE in
Eq.~$($\mref{eq:aybe2}$)$ in $A\ltimes_{r^*,\ell^*}V^*$. In
particular, a linear map $P:A\rightarrow A$ is a Rota-Baxter
operator of weight zero if and only if $r=P-P^{21}$ is a
skew-symmetric solution of the AYBE in
$A\ltimes_{R^*,L^*}A^*$. \mlabel{it:motoaybe5} \item Let $\alpha, \beta:V\rightarrow
A$ be two linear maps which are identified as elements in
$(A\ltimes_{r^*,\ell^*}V^*)\otimes(A\ltimes_{r^*,\ell^*}V^*)$.
Then $\alpha$ is an \tto $\calo$-operator with \bop $\beta$ of
\type $-1$ if and only if $(\alpha-\alpha^{21})\pm
(\beta+\beta^{21})$ is a solution of the AYBE in
$A\ltimes_{r^*,\ell^*}V^*$. \mlabel{it:motoaybe1}

\item Let $\aop:A\rightarrow A$ be a linear map which is
identified as an element in $(A\ltimes_{R^*,L^*}A^*)\otimes
(A\ltimes_{R^*,L^*}A^*)$. Then $\aop$ satisfies
Eq.~$($\mref{eq:-1myb}$)$ if and only if $(\aop-\aop^{21})\pm({\rm
id}+{\rm id^{21}})$ is a solution of the AYBE in
$A\ltimes_{R^*,L^*}A^*$. \mlabel{it:motoaybe3}
\item Let
$P:A\rightarrow A$ be a linear map which is identified as an element
of $A\ltimes_{R^*,L^*} A^*$. Then $P$ is a Rota-Baxter operator of
weight $\lambda\neq 0$ if and only if both
$\frac{2}{\lambda}(P-P^{21})+2{\rm id}$ and
$\frac{2}{\lambda}(P-P^{21})-2{\rm id}^{21}$ are solutions of the
AYBE in $A\ltimes_{R^*,L^*}A^*$.
\mlabel{it:motoaybe4}
\end{enumerate}
\mlabel{co:motoaybe1}
\end{coro}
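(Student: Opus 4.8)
The plan is to run everything through Theorem~\mref{thm:skewgm}, which lifts a general \tto $\calo$-operator $\alpha:V\to A$ to the \tto $\calo$-operator $\tilde{\alpha}_-=\alpha-\alpha^{21}$ on the extension algebra $\hat A=A\ltimes_{r^*,\ell^*}V^*$, and then to feed the lifted operator into the AYBE/\MAYBE dictionary of Section~\mref{ss:oaybe}, which is stated precisely for operators realized as maps from the dual of a finite-dimensional algebra. All five parts are specializations of this single mechanism; the characteristic $\neq 2$ and finite-dimensionality hypotheses guarantee that $\hat A$ is finite-dimensional and that the symmetric/skew-symmetric decomposition of an element of $\hat A\otimes\hat A$ is available.

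For part~(\mref{it:motoaybe6}) I would first use Theorem~\mref{thm:skewgm} to replace the statement ``$\alpha$ is an \tto $\calo$-operator with \bop $\beta$ of \bwt $\kappa$'' by ``$\tilde{\alpha}_-$ is an \tto $\calo$-operator with \bop $\tilde{\beta}_+=\beta+\beta^{21}$ of \bwt $\kappa$ on $\hat A$.'' By construction $\tilde{\alpha}_-$ is skew-symmetric and $\tilde{\beta}_+$ is symmetric in $\hat A\otimes\hat A$; since $\beta$ is a balanced $A$-bimodule homomorphism, $\tilde{\beta}_+$ is a balanced $\hat A$-bimodule homomorphism by Lemma~\mref{le:syco}, hence invariant by Lemma~\mref{le:syin}. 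Thus $\tilde{\alpha}_-$ and $\pm\tilde{\beta}_+$ are exactly the skew-symmetric and symmetric parts of $r_\pm:=(\alpha-\alpha^{21})\pm(\beta+\beta^{21})$, the symmetric part being invariant, so Theorem~\mref{thm:aybea}(\mref{it:aybea}) applies to $\hat A$: each $r_\pm$ solves the $\frac{\kappa+1}{4}$-\MAYBE if and only if $\tilde{\alpha}_-$ is an \tto $\calo$-operator with \bop $\pm\tilde{\beta}_+$ of \bwt $\kappa$. The key observation is that the sign of the \bop is immaterial, because the defining right-hand side $\kappa(\pm\tilde{\beta}_+)(\pm\tilde{\beta}_+)=\kappa\,\tilde{\beta}_+\tilde{\beta}_+$ is unchanged; hence both $r_+$ and $r_-$ encode the identical operator condition, and part~(\mref{it:motoaybe6}) follows.

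Parts~(\mref{it:motoaybe5})--(\mref{it:motoaybe3}) are then read off by inserting special values and invoking the fact, noted after Definition~\mref{de:ayb2}, that the $\esym$-\MAYBE collapses to the AYBE whenever $\esym=0$ or $r$ is skew-symmetric. For part~(\mref{it:motoaybe1}) I take $\kappa=-1$, so $\frac{\kappa+1}{4}=0$ and the \MAYBE is the AYBE. For part~(\mref{it:motoaybe5}) I take $\beta=0$, so $\tilde{\beta}_+=0$ and $r=\alpha-\alpha^{21}$ is skew-symmetric, which is the case governed by Corollary~\mref{co:aybea}(\mref{it:aybss}); the ``in particular'' statement specializes to $V=A$ with $(A,L,R)$ and $\hat A=A\ltimes_{R^*,L^*}A^*$, where a weight-zero $\calo$-operator associated to $(A,L,R)$ is precisely a weight-zero Rota-Baxter operator. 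Part~(\mref{it:motoaybe3}) is part~(\mref{it:motoaybe1}) with $V=A$ and $\beta=\mrm{id}$: the identity is a balanced $A$-bimodule homomorphism, Eq.~(\mref{eq:-1myb}) says exactly that $\aop$ is an \tto $\calo$-operator with \bop $\mrm{id}$ of \bwt $-1$, and here $\tilde{\beta}_+=\mrm{id}+\mrm{id}^{21}$.

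The main obstacle is part~(\mref{it:motoaybe4}), the weight-$\lambda\neq0$ Rota-Baxter case, which is not a bare specialization: one must first reduce a weight-$\lambda$ relation to the modified equation~(\mref{eq:-1myb}). I would set $\aop:=\tfrac{2}{\lambda}P+\mrm{id}$ and verify the identity
\[
\aop(x)\apr\aop(y)-\aop(\aop(x)\apr y+x\apr\aop(y))=\tfrac{4}{\lambda^{2}}\big(P(x)\apr P(y)-P(P(x)\apr y+x\apr P(y)+\lambda\, x\apr y)\big)-x\apr y ,
\]
(equivalently, note that $\tfrac{2}{\lambda}P$ is Rota-Baxter of weight $2$ and invoke Corollary~\mref{co:mop} with $\hat\kappa=-1$), which shows that $P$ is Rota-Baxter of weight $\lambda$ if and only if $\aop$ satisfies Eq.~(\mref{eq:-1myb}). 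Applying part~(\mref{it:motoaybe3}) to $\aop$ then turns this into the condition that $(\aop-\aop^{21})\pm(\mrm{id}+\mrm{id}^{21})$ are solutions of the AYBE in $A\ltimes_{R^*,L^*}A^*$. The final bookkeeping step is the cancellation
\[
(\aop-\aop^{21})\pm(\mrm{id}+\mrm{id}^{21})=\tfrac{2}{\lambda}(P-P^{21})+(\mrm{id}-\mrm{id}^{21})\pm(\mrm{id}+\mrm{id}^{21}),
\]
which collapses to $\tfrac{2}{\lambda}(P-P^{21})+2\,\mrm{id}$ for the $+$ sign and to $\tfrac{2}{\lambda}(P-P^{21})-2\,\mrm{id}^{21}$ for the $-$ sign, matching the claimed pair; correctly tracking these asymmetric remainder terms is the one place where care is genuinely required.
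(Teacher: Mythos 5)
Your proposal is correct and takes essentially the same route as the paper: Theorem~\mref{thm:skewgm} to lift $(\alpha,\beta)$ to $(\tilde{\alpha}_-,\tilde{\beta}_+)$ on $\hat A$, then Theorem~\mref{thm:aybea} and Corollary~\mref{co:aybea} for parts (i)--(iii), specialization to $(A,L,R)$ with $\beta=\mathrm{id}$ for part (iv), and for part (v) the reduction of the weight-$\lambda$ Rota-Baxter relation to Eq.~(\mref{eq:-1myb}) via $\frac{2}{\lambda}P+\mathrm{id}$ followed by part (iv). The only difference is cosmetic: you verify that reduction by a direct computation (equivalently via Corollary~\mref{co:mop}), and you spell out the sign-immateriality of the modification $\pm\tilde{\beta}_+$ and the final $\mathrm{id}$/$\mathrm{id}^{21}$ cancellation, details the paper delegates to the citation~\mcite{E1} and to the reader.
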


\begin{proof}
\noindent (\mref{it:motoaybe6}) This follows from
Theorem~\mref{thm:skewgm} and Theorem~\mref{thm:aybea}.
\medskip

\noindent
(\mref{it:motoaybe5}) This follows from Theorem~\mref{thm:skewgm}
for $\kappa =0$ (or $\beta=0$)  and Corollary~\mref{co:aybea}.
\medskip

\noindent (\mref{it:motoaybe1}) This follows from
Theorem~\mref{thm:skewgm} for $\kappa =-1$ and
Corollary~\mref{co:aybea}.
\medskip

\noindent (\mref{it:motoaybe3}) This follows from Item~(\mref{it:motoaybe1})
in the case that $(V,r,\ell)=(A,L,R)$ and $\beta={\rm id}$.

\medskip

\noindent (\mref{it:motoaybe4}) By~\cite{E1} (see the discussion
after Corollary~\mref{co:mop}), $P$ is a Rota-Baxter operator of
weight $\lambda\neq 0$ if and only if $\frac{2P}{\lambda}+{\rm
id}$ is an \tto $\calo$-operator with \bop ${\rm id}$ of \bwt $-1$
from $(A,L,R)$ to $A$, i.e., $\frac{2P}{\lambda}+{\rm id}$
satisfies Eq.~(\mref{eq:-1myb}). Then the conclusion follows from
Item~(\mref{it:motoaybe3}).

\end{proof}

\section{\Tto $\calo$-operators and the \gyb associative Yang-Baxter equation}
\mlabel{sec:maybe}
We define the generalized associative Yang-Baxter equation and study its relationship with \tto $\calo$-operators.
\subsection{\Gyb associative Yang-Baxter equation}
We adapt the same notations as in Definition~\mref{de:ayb2}.
\begin{defn}
An element $r\in A\ot A$ is called a solution of the {\bf \gyb
associative Yang-Baxter equation (\GAYBE) in $A$} if it satisfies the
relation
\begin{equation}
( \id \otimes  \id \otimes L(x)-R(x)\otimes  \id \otimes
 \id )(r_{12}r_{13}+r_{13}r_{23}-r_{23}r_{12})=0,\;\;\forall x\in
 A.
\mlabel{eq:maybe}
\end{equation}
\mlabel{de:maybe}
\end{defn}

%
Our definition is motivated by
the following fact (also see Proposition 5.1 in \cite{Ag1}) which
relates to the construction of a kind of bialgebras in the different
notions such that associative D-bialgebras \mcite{Z}, balanced
infinitesimal bialgebras (in the opposite algebras) \mcite{Ag3} and
antisymmetric infinitesimal bialgebras \mcite{Bai2}.

\begin{prop} {\rm (}\mcite{Ag1,Ag3,Bai2}{\rm )}
Let $A$ be a $\bfk$-algebra with finite $\bfk$-dimension and let $r\in A\otimes A$. Define
$\Delta:A\rightarrow A\otimes A$ by
\begin{equation}
\Delta (x)=( \id \otimes L(x)-R(x)\otimes  \id )r,\;\;\forall x\in
A.\mlabel{eq:coproduct}
\end{equation}
Then
\begin{equation}
\Delta^*:A^*\otimes A^*\hookrightarrow (A\ot A)^* \rightarrow A^*
\mlabel{eq:productast}
\end{equation}
defines an
associative multiplication on $A^*$ if and only if $r$ is a solution of the \GAYBE.\mlabel{pp:bialgebra}
\end{prop}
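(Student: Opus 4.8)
The plan is to reduce the associativity of the multiplication $\Delta^*$ on $A^*$ to the \emph{coassociativity} of $\Delta$ via finite-dimensional duality, and then to verify coassociativity by a direct tensor computation that reproduces exactly the left-hand side of the \GAYBE.

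First I would record the standard duality between (co)associativity. Writing $\Delta^*(a^*\ot b^*)=(a^*\ot b^*)\circ\Delta$, for any $a^*,b^*,c^*\in A^*$ and $x\in A$ one has
\begin{eqnarray*}
\langle \Delta^*(\Delta^*(a^*\ot b^*)\ot c^*),x\rangle &=& \langle a^*\ot b^*\ot c^*,(\Delta\ot\id)\Delta(x)\rangle,\\
\langle \Delta^*(a^*\ot \Delta^*(b^*\ot c^*)),x\rangle &=& \langle a^*\ot b^*\ot c^*,(\id\ot\Delta)\Delta(x)\rangle.
\end{eqnarray*}
Since $A$ is finite-dimensional, $(A\ot A\ot A)^*\cong A^*\ot A^*\ot A^*$, so this pairing is nondegenerate; hence $\Delta^*$ is associative on $A^*$ if and only if $\Delta$ is coassociative, that is $(\id\ot\Delta)\Delta=(\Delta\ot\id)\Delta$.

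The core of the argument is then to compute the coassociator $(\id\ot\Delta)\Delta(x)-(\Delta\ot\id)\Delta(x)$ explicitly. Writing $r=\sum_i a_i\ot b_i$, so that $\Delta(x)=\sum_i a_i\ot xb_i-\sum_i a_ix\ot b_i$, I would expand both iterated comultiplications, obtaining four triple-tensor terms on each side. After merging adjacent products by associativity and relabelling the summation indices $i\leftrightarrow j$, I expect the two terms of the shape $a_i\ot a_jxb_i\ot b_j$ to cancel against each other, and the remaining six to assemble precisely into
\[
(\id\ot\id\ot L(x)-R(x)\ot\id\ot\id)\big(r_{12}r_{13}+r_{13}r_{23}-r_{23}r_{12}\big),
\]
using the expansions $r_{12}r_{13}=\sum a_ia_j\ot b_i\ot b_j$, $r_{13}r_{23}=\sum a_i\ot a_j\ot b_ib_j$ and $r_{23}r_{12}=\sum a_j\ot a_ib_j\ot b_i$. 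This identity holds for every $x\in A$, so the coassociator vanishes identically if and only if $r$ solves the \GAYBE; combined with the first step this yields the claim.

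The main obstacle is purely the bookkeeping in this middle step: one must keep the six surviving summands (each matched only after an index swap) correctly aligned with the six terms produced by applying $\id\ot\id\ot L(x)-R(x)\ot\id\ot\id$ to the AYBE tensor, and confirm that the two leftover terms genuinely cancel rather than conceal a sign error. No conceptual difficulty arises beyond this careful term-by-term verification.
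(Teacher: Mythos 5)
Your proposal is correct, and it is worth noting that the paper itself gives no proof of this proposition --- it is quoted with citations to \mcite{Ag1,Ag3,Bai2}, so your argument fills in exactly what the paper delegates to the references. The two steps are both sound: in finite dimension the pairing between $A^*\ot A^*\ot A^*$ and $A\ot A\ot A$ is perfect, so associativity of $\Delta^*$ is indeed equivalent to coassociativity of $\Delta$; and the tensor computation works out precisely as you predict. Expanding with $r=\sum_i a_i\ot b_i$, the difference $(\id\ot\Delta)\Delta(x)-(\Delta\ot\id)\Delta(x)$ produces eight summands, of which $-\sum_{i,j} a_i\ot a_jxb_i\ot b_j$ and $+\sum_{i,j} a_j\ot a_ixb_j\ot b_i$ cancel after the swap $i\leftrightarrow j$, and the surviving six match term by term (three of them again after an index swap) with
$$(\id\ot\id\ot L(x)-R(x)\ot\id\ot\id)\Big(\sum_{i,j}a_ia_j\ot b_i\ot b_j+\sum_{i,j}a_i\ot a_j\ot b_ib_j-\sum_{i,j}a_j\ot a_ib_j\ot b_i\Big),$$
so the coassociator vanishes for all $x$ exactly when $r$ solves the \GAYBE~(\mref{eq:maybe}). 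This duality-plus-expansion route is the standard proof in the cited sources and is also consistent with the paper's own nearby machinery: Lemma~\mref{lemma:maybe} carries out the same kind of dual-basis computation to identify the product $\Delta^*$ explicitly as $a^*\ast b^*=R^*(r(a^*))b^*-L^*(r^t(b^*))a^*$. No gap remains; the only care needed is the bookkeeping you already flagged.
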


Moreover, we have the following conclusion
\begin{lemma}
Let $(A,\cdot)$ be a $\bfk$-algebra with finite $\bfk$-dimension. Let $r\in A\otimes A$. The
multiplication $\ast$ on $A^*$ defined by
Eq.~$($\mref{eq:productast}$)$ is also given by
\begin{equation}
a^* \ast b^*=R^*(r(a^*))b^*-L^*(r^t(b^*))a^*,\;\;\forall a^*,b^*\in
A^*.\mlabel{eq:productr}
\end{equation}
 \mlabel{lemma:maybe}
\end{lemma}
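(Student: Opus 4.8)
The plan is to prove the two elements of $A^*$ coincide by pairing each against an arbitrary $x\in A$; since $A$ has finite dimension the pairing between $A^*$ and $A$ is nondegenerate, so agreement of all such scalars forces equality in $A^*$. First I would fix a representation $r=\sum_i u_i\ot v_i$ and expand, using $L(x)v_i=x\apr v_i$ and $R(x)u_i=u_i\apr x$,
$$\Delta(x)=(\id\ot L(x)-R(x)\ot\id)r=\sum_i u_i\ot(x\apr v_i)-\sum_i(u_i\apr x)\ot v_i.$$
By the definition of the product $\ast=\Delta^*$ in Eq.~(\mref{eq:productast}), pairing $\Delta(x)$ against $a^*\ot b^*$ gives
$$\langle a^*\ast b^*,x\rangle=\langle a^*\ot b^*,\Delta(x)\rangle=\sum_i\langle a^*,u_i\rangle\langle b^*,x\apr v_i\rangle-\sum_i\langle a^*,u_i\apr x\rangle\langle b^*,v_i\rangle.$$

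For the right-hand side of the asserted formula, I would first read off from the defining relations of $r$ and $r^t$ that $r(a^*)=\sum_i\langle a^*,u_i\rangle v_i$ and $r^t(b^*)=\sum_i\langle b^*,v_i\rangle u_i$; note that passing to $r^t$ interchanges the roles of the two tensor legs, which is exactly what produces $u_i$ (rather than $v_i$) in the second term. Next, specializing Proposition~\mref{pp:dual} to the bimodule $(A,L,R)$, whose dual is $(A^*,R^*,L^*)$, the two actions are given by $\langle R^*(y)c^*,x\rangle=\langle c^*,x\apr y\rangle$ and $\langle c^*L^*(y),x\rangle=\langle c^*,y\apr x\rangle$. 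Substituting $y=r(a^*),\,c^*=b^*$ in the first and $y=r^t(b^*),\,c^*=a^*$ in the second, I obtain
$$\langle R^*(r(a^*))b^*,x\rangle=\sum_i\langle a^*,u_i\rangle\langle b^*,x\apr v_i\rangle,\qquad \langle a^*L^*(r^t(b^*)),x\rangle=\sum_i\langle b^*,v_i\rangle\langle a^*,u_i\apr x\rangle,$$
so that the difference matches the expression for $\langle a^*\ast b^*,x\rangle$ term by term (here $a^*L^*(r^t(b^*))=L^*(r^t(b^*))a^*$ is just the right action of the dual bimodule written with the operator on the left).

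Since the two functionals agree on every $x\in A$ and the pairing is nondegenerate, the identity $a^*\ast b^*=R^*(r(a^*))b^*-L^*(r^t(b^*))a^*$ follows. The computation is essentially bookkeeping; the only point requiring care is to keep straight which of $R^*$ and $L^*$ is the left and which is the right action of the dual bimodule $(A^*,R^*,L^*)$, and to track that the $R(x)\ot\id$ summand of $\Delta$ --- acting on the \emph{first} leg of $r$ --- is precisely what forces the appearance of $r^t$ together with the minus sign in the second term.
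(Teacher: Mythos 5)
Your proof is correct and takes essentially the same route as the paper's: both verify the identity by expanding $\Delta(x)=(\id\otimes L(x)-R(x)\otimes \id)r$ and pairing against $a^*\otimes b^*$, the only difference being that the paper computes with an explicit basis $\{e_i\}$ and structure constants $a_{i,j}, c^k_{i,j}$, whereas you argue coordinate-free by pairing against an arbitrary $x\in A$ and invoking nondegeneracy of the pairing in finite dimension. Your identifications $r(a^*)=\sum_i\langle a^*,u_i\rangle v_i$ and $r^t(b^*)=\sum_i\langle b^*,v_i\rangle u_i$, and your formulas for the dual actions $R^*$ and $L^*$, all match the paper's conventions (the definitions of $F_r$, $r^t$ and Proposition~\mref{pp:dual} specialized to $(A,L,R)$), so the term-by-term matching is valid.
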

\begin{proof}
 Let $\{e_1,...,e_n\}$ be a basis of $A$ and
$\{e_1^*,...,e_n^*\}$ be its dual basis. Suppose that
$r=\sum_{i,j}a_{i,j}e_i\otimes e_j$ and $e_i\cdot
e_j=\sum_{k}c_{i,j}^ke_k$. Then for any $k,l$ we have
\begin{eqnarray*}
e_k^* \ast e_l^* &=& \sum_s\langle e_k^*\otimes
e_l^*,\Delta(e_s)\rangle e_s^* = \sum_s\langle e_k^*\otimes e_l^*,(\id\otimes L(e_s)-R(e_s)\otimes\id)r\rangle e_s^*\\
&=& \sum_{s,t}(a_{k,t}c_{s,t}^l-c_{t,s}^ka_{t,l})e_s^*=
R^*(r(e_k^*))e_l^*-L^*(r^t(e_l^*))e_k^*.
\end{eqnarray*}
\end{proof}

The above lemma motivates us to apply the approach considered in
Section~\mref{ss:moop}. More precisely, we take
the $A$-bimodule $\bfk$-algebra $(R,\rpr,\ell,r)$ to be $(A^*,R^*,L^*)$ with the zero multiplication
and set
\begin{equation}
\delta_{+}=r,\quad \delta_{-}=-r^t.
\end{equation}
Assume that $\bfk$ has characteristic not equal to 2 and define
\begin{equation}
\alpha=(r-r^t)/2,\quad \beta=(r+r^t)/2,
\mlabel{eq:alphabeta1}
\end{equation}
that is, $\alpha$ and $\beta$ are the {\bf skew-symmetric part} and
the {\bf symmetric part} of $r$ respectively. So $r=\alpha+\beta$
and $r^t=-\alpha+\beta$.

\begin{prop}
Let $\bfk$ have characteristic not equal to 2. Let $(A,\apr)$ be a $\bfk$-algebra with finite $\bfk$-dimension and $r\in A\otimes A$. Let
$\alpha,\beta$ be given by Eq.~$($\mref{eq:alphabeta1}$)$. Suppose that
$\beta$ is a balanced $A$-bimodule homomorphism, that is, $\beta$ satisfies
Eq.~$($\mref{eq:invariant}$)$. If $\alpha$ is an \tto $\calo$-operator
with \bop $\beta$ of any \bwt $\kappa\in \bfk$, then the product defined by
Eq.~$($\mref{eq:productr}$)$ defines a $\bfk$-algebra structure on
$A^*$ and
$r$ is a solution of the \GAYBE. \mlabel{pp:mtoop}
\end{prop}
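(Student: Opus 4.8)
The two conclusions are in fact equivalent, and the plan is to reduce both to a single associativity statement. By Lemma~\mref{lemma:maybe} the product of Eq.~(\mref{eq:productr}) is exactly the product $\Delta^*$ of Eq.~(\mref{eq:productast}), and by Proposition~\mref{pp:bialgebra} that product is associative if and only if $r$ solves the \GAYBE. Hence it suffices to prove that $a^*\ast b^*=R^*(r(a^*))b^*-L^*(r^t(b^*))a^*$ on $A^*$ is associative; the \GAYBE assertion is then immediate.

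First I would unwind the hypothesis on $\beta$. Since $\beta=(r+r^t)/2$ is symmetric and satisfies the invariance condition Eq.~(\mref{eq:invariant}), Lemma~\mref{le:syin} shows that $\beta$, regarded as a map from $(A^*,R^*,L^*)$ to $A$, is simultaneously balanced (Eq.~(\mref{eq:dualssy})) and an $A$-bimodule homomorphism (Eq.~(\mref{eq:dualsabi})). In particular $\beta$ is balanced of \bwt $1$, that is, $R^*(\beta(a^*))b^*=a^*L^*(\beta(b^*))$ for all $a^*,b^*\in A^*$. Next I would identify $\ast$ with the product $\ast_\alpha$ attached to $\alpha$ alone: writing $r=\alpha+\beta$ and $r^t=-\alpha+\beta$ and expanding Eq.~(\mref{eq:productr}), the $\alpha$-contributions reproduce $R^*(\alpha(a^*))b^*+a^*L^*(\alpha(b^*))$, while the residual $\beta$-contributions $R^*(\beta(a^*))b^*-a^*L^*(\beta(b^*))$ cancel by the balanced identity just recorded. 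Hence $\ast$ coincides with $\ast_\alpha$ (equivalently with the product $\dpr$ of Eq.~(\mref{eq:product}) for $\delta_+=r$, $\delta_-=-r^t$ on the bimodule $(A^*,R^*,L^*)$ carrying the zero internal multiplication).

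With this identification the conclusion follows from Theorem~\mref{thm:ansatz}(\mref{it:an1}). Regarding $A^*$ as an $A$-bimodule $\bfk$-algebra with the zero multiplication, the condition Eq.~(\mref{eq:mueq}) is vacuous and the balanced condition Eq.~(\mref{eq:ksy}) is precisely the identity already established; moreover the hypothesis that $\alpha$ is an \tto $\calo$-operator with \bop $\beta$ of \bwt $\kappa$ is exactly Eq.~(\mref{eq:gmybe}) specialized to the zero product. Theorem~\mref{thm:ansatz}(\mref{it:an1}) then gives associativity of $\ast_\alpha=\ast$, so $(A^*,\ast)$ is a $\bfk$-algebra, and by the reduction of the first paragraph $r$ solves the \GAYBE. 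The argument carries no essential difficulty: the only steps needing care are the bookkeeping that collapses $\ast$ onto $\ast_\alpha$ (the symmetric part $\beta$ drops out precisely because invariance forces it to be balanced) and the verification that the conditions Eq.~(\mref{eq:ksy}) and Eq.~(\mref{eq:mueq}) hold automatically for the zero multiplication. Everything else is assembled from Lemma~\mref{le:syin}, Theorem~\mref{thm:ansatz}, Lemma~\mref{lemma:maybe} and Proposition~\mref{pp:bialgebra}.
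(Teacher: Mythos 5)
Your proposal is correct and takes essentially the same route as the paper: the paper's own proof likewise applies Theorem~\mref{thm:ansatz}~(\mref{it:an1}) to the $A$-bimodule $\bfk$-algebra $(A^*,R^*,L^*)$ with the zero multiplication to obtain associativity of the product in Eq.~(\mref{eq:productr}), and then invokes Lemma~\mref{lemma:maybe} (together with Proposition~\mref{pp:bialgebra}) to conclude that $r$ solves the \GAYBE. The steps you spell out---using Lemma~\mref{le:syin} to turn invariance of $\beta$ into the balanced identity, and cancelling the $\beta$-contributions so that $\ast$ collapses onto $\ast_\alpha$---are exactly the bookkeeping the paper leaves implicit (cf.\ Corollary~\mref{co:product}).
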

\begin{proof}
Applying Theorem~\mref{thm:ansatz} to the $A$-bimodule $\bfk$-algebra $(R,\rpr,\ell,r)$ constructed before the proposition, we see that the product defined by Eq.~(\mref{eq:productr}) is associative. Then by Lemma~\mref{lemma:maybe}, $r$ is a solution of the \GAYBE.
\end{proof}

As a direct consequence, we have
\begin{coro}
Under the same assumption as in Proposition~\mref{pp:mtoop}, a solution of the \MAYBE of any \bwt $\kappa\in \bfk$ is also a solution of the \GAYBE.
\mlabel{co:II-MYBE}
\end{coro}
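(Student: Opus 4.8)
The plan is to read this off as a direct concatenation of Theorem~\mref{thm:aybea} and Proposition~\mref{pp:mtoop}, both of which are in force under the standing hypothesis of Proposition~\mref{pp:mtoop} that $\beta$ is a balanced $A$-bimodule homomorphism. The one thing to observe at the outset is that this hypothesis is exactly the invariance of $\beta$: since $\beta$ is the symmetric part of $r$, it is a symmetric element of $A\ot A$, so Lemma~\mref{le:syin} identifies ``balanced $A$-bimodule homomorphism'' with ``invariant'', which is precisely the condition needed to invoke Item~(\mref{it:aybea}) of Theorem~\mref{thm:aybea}.

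First I would fix a solution $r$ of the $\esym$-\MAYBE for an arbitrary \ewt $\esym\in\bfk$, and introduce the reparametrization $\kappa:=4\esym-1$, so that $\esym=\frac{\kappa+1}{4}$. As $\esym$ ranges over $\bfk$ so does $\kappa$, which is what lets an arbitrary \MAYBE be matched with a \bwt $\kappa$. Applying Theorem~\mref{thm:aybea}(\mref{it:aybea}) with this $\kappa$, the invariance of $\beta$ makes ``$r$ is a solution of the \MAYBE of \ewt $\frac{\kappa+1}{4}$'' equivalent to ``$\alpha$ is an \tto $\calo$-operator with \bop $\beta$ of \bwt $\kappa$''. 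In particular our $r$ yields such an \tto $\calo$-operator.

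Finally I would feed this \tto $\calo$-operator into Proposition~\mref{pp:mtoop}, whose conclusion is exactly that whenever $\alpha$ is an \tto $\calo$-operator with \bop $\beta$ of some \bwt $\kappa\in\bfk$, the element $r$ solves the \GAYBE. Stringing the two implications together gives the corollary. There is essentially no substantive obstacle: the content lies entirely in the two cited results, and the only points demanding care are the bookkeeping $\esym=(\kappa+1)/4$ that converts ``any \ewt'' into ``any \bwt $\kappa$'', and the confirmation --- via Lemma~\mref{le:syin} --- that the hypotheses on $\beta$ in the two results coincide.
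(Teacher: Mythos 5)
Your proposal is correct and is essentially identical to the paper's own proof: both chain Theorem~\mref{thm:aybea}(\mref{it:aybea}) with Proposition~\mref{pp:mtoop} via the reparametrization of the \ewt (the paper writes the resulting \bwt as $4\kappa-1$, which is your $\kappa=4\esym-1$). Your extra remark that the hypothesis of Proposition~\mref{pp:mtoop} (via Lemma~\mref{le:syin}) supplies exactly the invariance of $\beta$ needed for Theorem~\mref{thm:aybea} is a correct and slightly more explicit accounting of a point the paper leaves implicit in the phrase ``under the same assumption.''
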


\begin{proof}
Let $r$ be a solution of the \MAYBE of \bwt $\kappa$. Define $\alpha$ and $\beta$ by Eq.~(\mref{eq:alphabeta1}). Then by Theorem~\mref{thm:aybea}, $\alpha$ is an \tto $\calo$-operator with \bop $\beta$ of \type $4\kappa-1$. Then by Proposition~\mref{pp:mtoop}, $r$ is a solution of the \GAYBE.
\end{proof}

\subsection{Relation with \tto $\calo$-operators}

\begin{lemma}
Let $A$ be a $\bfk$-algebra and $(V,\ell,r)$ be a bimodule. Let
$\alpha:V\to A$ be a linear map. Then the product
\begin{equation}
u\ast_\alpha v:=\ell(\alpha(u))v+ur(\alpha(v)),\quad \forall u,v\in V,
\end{equation}
defines a $\bfk$-algebra structure on $V$ if and only if the
following equation holds:
\begin{equation}
\ell\Big(\alpha(u)\apr\alpha(v)-\alpha(u\ast_\alpha v)\Big)w=
ur\Big(\alpha(v)\apr\alpha(w)-\alpha(v\ast_\alpha w)\Big),\quad
\forall u,v\in V. \mlabel{eq:biaspro}
\end{equation}
\mlabel{le:biaspro}
\end{lemma}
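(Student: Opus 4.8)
The plan is to recognize the statement as the special case of Lemma~\mref{le:product} obtained by taking the $A$-bimodule $\bfk$-algebra $(R,\rpr,\ell,r)$ there to be the $A$-bimodule $(V,\ell,r)$ equipped with the zero multiplication and the weight to be $\lambda=0$. Under this substitution the product $\ell(\alpha(u))v+ur(\alpha(v))+\lambda u\rpr v$ of Lemma~\mref{le:product} collapses to the present $\ast_\alpha$, and its associativity criterion Eq.~\mref{eq:condition} becomes exactly Eq.~\mref{eq:biaspro}. Thus a single sentence citing Lemma~\mref{le:product} already settles the claim. Nevertheless, since all terms involving $\rpr$ disappear, the direct computation is short and transparent, so I would also record it, as it pinpoints precisely which bimodule axioms are in play.

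For the direct argument I would expand the two bracketings of a triple product. Writing $u\ast_\alpha v=\ell(\alpha(u))v+ur(\alpha(v))$, I obtain
\begin{align*}
(u\ast_\alpha v)\ast_\alpha w&=\ell(\alpha(u\ast_\alpha v))w+(\ell(\alpha(u))v)r(\alpha(w))+(ur(\alpha(v)))r(\alpha(w)),\\
u\ast_\alpha(v\ast_\alpha w)&=\ell(\alpha(u))(\ell(\alpha(v))w)+\ell(\alpha(u))(vr(\alpha(w)))+ur(\alpha(v\ast_\alpha w)).
\end{align*}
Then I would apply the three defining properties of the $A$-bimodule $(V,\ell,r)$: the compatibility identity $(\ell(x)v)r(y)=\ell(x)(vr(y))$ cancels the second summand on the first line against the second summand on the second; the right-module law gives $(ur(\alpha(v)))r(\alpha(w))=ur(\alpha(v)\apr\alpha(w))$; and the left-module law gives $\ell(\alpha(u))(\ell(\alpha(v))w)=\ell(\alpha(u)\apr\alpha(v))w$. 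Subtracting, the associator reduces to
\[
(u\ast_\alpha v)\ast_\alpha w-u\ast_\alpha(v\ast_\alpha w)=ur\big(\alpha(v)\apr\alpha(w)-\alpha(v\ast_\alpha w)\big)-\ell\big(\alpha(u)\apr\alpha(v)-\alpha(u\ast_\alpha v)\big)w.
\]
The product $\ast_\alpha$ is associative precisely when this vanishes for all $u,v,w\in V$, which is exactly Eq.~\mref{eq:biaspro}; this delivers both implications simultaneously.

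I expect no genuine obstacle here: every step is bookkeeping that matches each summand against its partner via the correct bimodule axiom. The only point that deserves care is the logical reading of ``associative'': it means the associator vanishes for \emph{all} triples $u,v,w$, so the equivalence is with Eq.~\mref{eq:biaspro} quantified over all $u,v,w$, and no cancellation or nondegeneracy beyond the displayed identity is invoked. Relative to Lemma~\mref{le:product}, this case is strictly easier, since the absence of the multiplication $\rpr$ removes every term carrying the weight $\lambda$.
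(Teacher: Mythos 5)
Your proposal is correct and matches the paper exactly: the paper's own proof is the one-line reduction to Lemma~\mref{le:product} with $(R,\ell,r)=(V,\ell,r)$ and $\lambda=0$, which is precisely your first paragraph. Your supplementary direct computation is just the proof of Lemma~\mref{le:product} with the $\rpr$-terms deleted, so it adds transparency but not a genuinely different route.
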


\begin{proof}
It follows from Lemma~\mref{le:product} by setting $(R,\ell,r)=(V,\ell,r)$
and $\lambda=0$.
\end{proof}

\begin{theorem}
Let $A$ be a $\bfk$-algebra and $(V,\ell,r)$ be an $A$-bimodule, both of finite dimension over $\bfk$. Let
$\alpha:V\rightarrow A$ be a linear map from $V$ to $A$. Using the
same notations in Definition~\mref{de:tilde}, then $\tilde{\alpha}_{-}$
identified as an element of $\hat{A}\otimes\hat{A}$ is a
skew-symmetric solution of the \GAYBE~$($\mref{eq:maybe}$)$ if and only if Eq.~$($\mref{eq:biaspro}$)$ and the
following equations hold:
\begin{equation}
\alpha(u)\apr\alpha(\ell(x)v)-\alpha(u\ast_\alpha (\ell(x)v))=
\alpha(ur(x))\apr\alpha(v)-\alpha((ur(x))\ast_\alpha v),\mlabel{eq:uvx1}
\end{equation}
\begin{equation}
\alpha(u)\apr\alpha(vr(x))-\alpha(u\ast_\alpha (vr(x)))=(\alpha(u)\apr\alpha(v))\apr
x-\alpha(u\ast_\alpha v)\apr x,\mlabel{eq:uvx2}
\end{equation}
\begin{equation}
\alpha(\ell(x)u)\apr\alpha(v)-\alpha((\ell(x)u)\ast_\alpha v)=
x\apr(\alpha(u)\apr\alpha(v))-x\apr\alpha(u\ast_\alpha v),\mlabel{eq:uvx3}
\end{equation}
for any $u,v\in V,x\in A$.\mlabel{thm:maybeequi}
\end{theorem}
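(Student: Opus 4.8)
The plan is to run everything through Proposition~\mref{pp:bialgebra} and Lemma~\mref{lemma:maybe}, applied to the finite-dimensional algebra $\hat{A}=A\ltimes_{r^*,\ell^*}V^*$. First, $\tilde{\alpha}_-=\alpha-\alpha^{21}$ is skew-symmetric by its very definition~(\mref{eq:syco}), so only the GAYBE condition must be analyzed. By Proposition~\mref{pp:bialgebra}, $\tilde{\alpha}_-$ is a solution of the GAYBE~(\mref{eq:maybe}) in $\hat{A}$ if and only if the multiplication $\ast$ induced on $\hat{A}^*$ by Eq.~(\mref{eq:productast}) is associative. Since $\tilde{\alpha}_-$ is skew-symmetric, $(\tilde{\alpha}_-)^t=-\tilde{\alpha}_-$, so Lemma~\mref{lemma:maybe} gives the explicit form
$$\xi\ast\eta=R^*_{\hat{A}}(\tilde{\alpha}_-(\xi))\eta+L^*_{\hat{A}}(\tilde{\alpha}_-(\eta))\xi,\qquad \forall\,\xi,\eta\in\hat{A}^*.$$

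The next step is to compute this product under the identifications $\hat{A}=A\oplus V^*$ and $\hat{A}^*=A^*\oplus V$. Recall from the proof of Theorem~\mref{thm:skewgm} that $\tilde{\alpha}_-(a^*)=\alpha^*(a^*)\in V^*$ for $a^*\in A^*$ and $\tilde{\alpha}_-(v)=-\alpha(v)\in A$ for $v\in V$, where $\alpha^*$ is the dual map of $\alpha$. Using the semidirect-product multiplication on $\hat{A}$ together with the dual actions $L^*_{\hat{A}},R^*_{\hat{A}}$, I would record four facts: for $u,v\in V$ one has $u\ast v=-(u\ast_\alpha v)=-(\ell(\alpha(u))v+ur(\alpha(v)))\in V$; for $a^*,b^*\in A^*$ one has $a^*\ast b^*=0$; and the two mixed products land in $A^*$, namely $\langle u\ast a^*,y\rangle=\langle a^*,\alpha(\ell(y)u)-y\apr\alpha(u)\rangle$ and $\langle a^*\ast u,y\rangle=\langle a^*,\alpha(ur(y))-\alpha(u)\apr y\rangle$ for all $y\in A$. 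The key structural observation is that the $V$-component of $\ast$ is, up to an irrelevant global sign, the product $\ast_\alpha$ of Lemma~\mref{le:biaspro}, while two $A^*$-factors always multiply to zero.

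With these formulas in hand, I would test the associativity identity $(\xi\ast\eta)\ast\zeta=\xi\ast(\eta\ast\zeta)$ on all eight triples $(\xi,\eta,\zeta)\in\{A^*,V\}^3$. In the four cases containing at least two $A^*$-arguments both sides vanish, because $A^*\ast A^*=0$ and any product involving an $A^*$-factor stays in $A^*$; these impose no condition. The case $(V,V,V)$ reproduces the associativity of $\ast_\alpha$, which by Lemma~\mref{le:biaspro} is precisely Eq.~(\mref{eq:biaspro}). The three remaining cases with a single $A^*$-argument, namely $(V,A^*,V)$, $(A^*,V,V)$ and $(V,V,A^*)$, after pairing the resulting elements of $A^*$ against an arbitrary $y\in A$ and simplifying by the bimodule compatibility $(\ell(x)v)r(y)=\ell(x)(vr(y))$ and associativity of $\apr$, collapse term-by-term to Eqs.~(\mref{eq:uvx1}), (\mref{eq:uvx2}) and (\mref{eq:uvx3}) respectively. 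Hence the full set of associativity conditions is equivalent to Eq.~(\mref{eq:biaspro}) together with the three displayed equations, which is the assertion.

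The only delicate point is the bookkeeping of the mixed products: computing $L^*_{\hat{A}}$ and $R^*_{\hat{A}}$ on each summand and tracking into which summand each product lands. A clean device for organizing the final matching is the ``defect'' map $D(u,v):=\alpha(u)\apr\alpha(v)-\alpha(u\ast_\alpha v)\in A$; in terms of it Eq.~(\mref{eq:biaspro}) reads $\ell(D(u,v))w=ur(D(v,w))$, while Eqs.~(\mref{eq:uvx1})--(\mref{eq:uvx3}) read $D(u,\ell(x)v)=D(ur(x),v)$, $D(u,vr(x))=D(u,v)\apr x$ and $D(\ell(x)u,v)=x\apr D(u,v)$. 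In this form the correspondence between the four associativity cases and the four identities is transparent, and each identity follows from a routine expansion once the mixed products above are established.
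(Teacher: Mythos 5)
Your proposal is correct and takes essentially the same route as the paper: both reduce the GAYBE condition on $\tilde{\alpha}_-$, via Proposition~\mref{pp:bialgebra} and Lemma~\mref{lemma:maybe}, to associativity of the induced product on $\hat{A}^*=A^*\oplus V$ and then split by components, and your eight triples match the paper's Eqs.~(\mref{eq:expan1})--(\mref{eq:expan7}) case for case, with $(V,V,V)$ yielding Eq.~(\mref{eq:biaspro}), the single-$A^*$ cases yielding Eqs.~(\mref{eq:uvx1})--(\mref{eq:uvx3}) under exactly the same assignments, and the cases with at least two $A^*$-arguments holding automatically. The only difference is bookkeeping: you tabulate the component products first (your mixed-product formulas are correct) and verify associativity directly, whereas the paper first converts associativity into the balance condition of Lemma~\mref{le:biaspro} in $\hat{A}$ and then expands using the computations from the proof of Theorem~\mref{thm:skewgm}.
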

\begin{proof}
By Proposition~\mref{pp:bialgebra}, Lemma~\mref{lemma:maybe} and
Lemma~\mref{le:biaspro} we see that $\tilde{\alpha}_{-}\in\hat{A}\otimes\hat{A}$ is a skew-symmetric
solution of the \GAYBE~(\mref{eq:maybe}) if and only if for any $u,v,w\in
V,a^*,b^*,c^*\in A^*$,
{\small
\begin{eqnarray*}
&
&R_{\hat{A}}^*(\tilde{\alpha}_{-}(u+a^*)\apr\tilde{\alpha}_{-}(v+b^*)-
\tilde{\alpha}_{-}(R_{\hat{A}}^*(\tilde{\alpha}_{-}(u+a^*))(v+b^*)+
(u+a^*)L_{\hat{A}}^*(\tilde{\alpha}_{-}(v+b^*))))(w+c^*)\\
&=&
(u+a^*)L_{\hat{A}}^*(\tilde{\alpha}_{-}(v+b^*)\apr\tilde{\alpha}_{-}(w+c^*)-
\tilde{\alpha}_{-}(R_{\hat{A}}^*(\tilde{\alpha}_{-}(v+b^*))(w+c^*)+
(v+b^*)L_{\hat{A}}^*(\tilde{\alpha}_{-}(w+c^*)))),
\end{eqnarray*}
}
By the proof of Theorem~\mref{thm:skewgm}, the above equation is equivalent to
{\allowdisplaybreaks
\begin{eqnarray*}
&
&R_{\hat{A}}^*(\alpha(u)\apr\alpha(v)-\alpha(\ell(\alpha(u))v)-\alpha(ur(\alpha(v)))
-r^*(\alpha(u))\alpha^*(b^*)+\alpha^*(R_{\hat A}^*(\alpha(u))b^*)\\
& &-\alpha^*(uL_{\hat A}^*(\alpha^*(b^*)))-
\alpha^*(a^*)\ell^*(\alpha(v))-\alpha^*(R_{\hat
A}^*(\alpha^*(a^*))v)+\alpha^*(a^*L_{\hat A}^*(\alpha(v))))w+\\ & &
R_{\hat{A}}^*(\alpha(u)\apr\alpha(v)-\alpha(\ell(\alpha(u))v)-
\alpha(ur(\alpha(v))) -r^*(\alpha(u))\alpha^*(b^*)+\alpha^*(R_{\hat
A}^*(\alpha(u))b^*)\\ & &-\alpha^*(uL_{\hat A}^*(\alpha^*(b^*)))-
\alpha^*(a^*)\ell^*(\alpha(v))-\alpha^*(R_{\hat
A}^*(\alpha^*(a^*))v)+\alpha^*(a^*L_{\hat A}^*(\alpha(v))))c^*\\
&=&uL_{\hat{A}}^*(\alpha(v)\apr\alpha(w)-\alpha(\ell(\alpha(v))w)-\alpha(vr(\alpha(w)))-
r^*(\alpha(v))\alpha^*(c^*)+\alpha^*(R_{\hat{A}}^*(\alpha(v))c^*)\\
& &-
\alpha^*(vL_{\hat{A}}^*(\alpha^*(c^*)))-\alpha^*(b^*)\ell^*(\alpha(w))-\alpha^*(R_{\hat{A}}^*(\alpha^*(b^*))w)+
\alpha^*(b^*L_{\hat{A}}^*(\alpha(w))))+\\
&
&a^*L_{\hat{A}}^*(\alpha(v)\apr\alpha(w)-\alpha(\ell(\alpha(v))w)-\alpha(vr(\alpha(w)))-
r^*(\alpha(v))\alpha^*(c^*)+\alpha^*(R_{\hat{A}}^*(\alpha(v))c^*)\\
& &-
\alpha^*(vL_{\hat{A}}^*(\alpha^*(c^*)))-\alpha^*(b^*)\ell^*(\alpha(w))-\alpha^*(R_{\hat{A}}^*(\alpha^*(b^*))w)+
\alpha^*(b^*L_{\hat{A}}^*(\alpha(w)))).
\end{eqnarray*}
}
By suitable choices of $u,v,w\in V$ and $a^*,b^*,c^*\in A^*$, we find that this equation holds if and only if the following equations hold:
{\allowdisplaybreaks \small
\begin{eqnarray}
&&R_{\hat{A}}^*(\alpha(u)\apr\alpha(v)-\alpha(\ell(\alpha(u))v+ur(\alpha(v))))w
\mlabel{eq:expan1}
\\
&=&
uL_{\hat{A}}^*(\alpha(v)\apr\alpha(w)-\alpha(\ell(\alpha(v))w+vr(\alpha(w))))
\notag \quad \text{(take $a^*=b^*=c^*=0$)},
\end{eqnarray}
\begin{eqnarray}
& &R_{\hat{A}}^*(-r^*(\alpha(u))\alpha^*(b^*)+\alpha^*(R_{\hat
A}^*(\alpha(u))b^*)-\alpha^*(uL_{\hat A}^*(\alpha^*(b^*))))w\mlabel{eq:expan2}\\
&=&
uL_{\hat{A}}^*(-\alpha^*(b^*)\ell^*(\alpha(w))-\alpha^*(R_{\hat{A}}^*(\alpha^*(b^*))w)+
\alpha^*(b^*L_{\hat{A}}^*(\alpha(w))))\notag
\quad\text{(take $v=a^*=c^*=0$)},
\end{eqnarray}
\begin{eqnarray}
& &R_{\hat{A}}^*(-\alpha^*(a^*)\ell^*(\alpha(v))-\alpha^*(R_{\hat
A}^*(\alpha^*(a^*))v)+\alpha^*(a^*L_{\hat A}^*(\alpha(v))))w\mlabel{eq:expan3}\\
&=&
a^*L_{\hat{A}}^*(\alpha(v)\apr\alpha(w) -\alpha(\ell(\alpha(v))w)-\alpha(vr(\alpha(w))))\notag
\quad\text{(take $u=b^*=c^*=0$)}, \end{eqnarray}
\begin{eqnarray}
& &R_{\hat{A}}^*(\alpha(u)\apr\alpha(v)-\alpha(\ell(\alpha(u))v)-
\alpha(ur(\alpha(v))))c^*\mlabel{eq:expan4}\\
&=&uL_{\hat{A}}^*(-
r^*(\alpha(v))\alpha^*(c^*)+\alpha^*(R_{\hat{A}}^*(\alpha(v))c^*)-
\alpha^*(vL_{\hat{A}}^*(\alpha^*(c^*))))\notag
\quad \text{(take $w=a^*=b^*=0$)},
\end{eqnarray}
\begin{eqnarray}
& &R_{\hat{A}}^*(-r^*(\alpha(u))\alpha^*(b^*)+\alpha^*(R_{\hat
A}^*(\alpha(u))b^*)-\alpha^*(uL_{\hat A}^*(\alpha^*(b^*))))c^*\mlabel{eq:expan5}\\
&=&0
\quad \text{(take $v=w=a^*=0$)},
\notag
\end{eqnarray}
\begin{eqnarray}
& &R_{\hat{A}}^*(- \alpha^*(a^*)\ell^*(\alpha(v))-\alpha^*(R_{\hat
A}^*(\alpha^*(a^*))v)+\alpha^*(a^*L_{\hat
A}^*(\alpha(v))))c^*\mlabel{eq:expan6}\\
&=&a^*L_{\hat{A}}^*(-
r^*(\alpha(v))\alpha^*(c^*)+\alpha^*(R_{\hat{A}}^*(\alpha(v))c^*)-
\alpha^*(vL_{\hat{A}}^*(\alpha^*(c^*))))\notag
\quad\text{(take $u=w=b^*=0$)},
\end{eqnarray}
\begin{eqnarray}
&
&a^*L_{\hat{A}}^*(-\alpha^*(b^*)\ell^*(\alpha(w))-\alpha^*(R_{\hat{A}}^*(\alpha^*(b^*))w)+
\alpha^*(b^*L_{\hat{A}}^*(\alpha(w))))\mlabel{eq:expan7} \\
&=& 0 \quad \text{(take $u=v=c^*=0$)}. \notag
\end{eqnarray}
}
Thus we just need to prove
\begin{enumerate}
\item
Eq.~(\mref{eq:expan1}) $\Leftrightarrow$ Eq.~(\mref{eq:biaspro}), \item
Eq.~(\mref{eq:expan2}) $\Leftrightarrow$ Eq.~(\mref{eq:uvx1}), \item
Eq.~(\mref{eq:expan3}) $\Leftrightarrow$ Eq.~(\mref{eq:uvx2}), \item
Eq.~(\mref{eq:expan4}) $\Leftrightarrow$ Eq.~(\mref{eq:uvx3}),
\item
both sides of Eq.~(\mref{eq:expan6}) equal to zero, and
\item
Eq.~(\mref{eq:expan5}) and Eq.~(\mref{eq:expan7}) hold.
\end{enumerate}
The proofs of these statements are similar. So we just prove that
Eq.~(\mref{eq:expan2}) holds if and only if Eq.~(\mref{eq:uvx1}) holds.
Let $LHS$ and $RHS$
denote the left-hand side and right-hand side of
Eq.~(\mref{eq:expan2}) respectively. Then for any $x\in A,s^*\in V^*$, we have
$$\langle LHS,s^*\rangle =\langle RHS,s^*\rangle=0.$$
Further
\begin{eqnarray*}
\langle LHS, x\rangle&=&\langle
w,-r^*(x)(r^*(\alpha(u))\alpha^*(b^*))+r^*(x)\alpha^*(R_{\hat{A}}^*(\alpha(u))b^*)-
r^*(x)\alpha^*(uL_{\hat{A}}^*(\alpha^*(b^*)))\rangle\\
&=&\langle-\alpha((wr(x))r(\alpha(u)))+\alpha(wr(x))\apr\alpha(u),b^*\rangle
-\langle\alpha^*(b^*)\apr\alpha(wr(x)),u\rangle\\
&=&\langle-\alpha((wr(x))r(\alpha(u)))+\alpha(wr(x)) \apr\alpha(u)-\alpha(\ell(\alpha(wr(x)))u),b^*\rangle,\\
\langle RHS,x\rangle&=&\langle
u,-(\alpha^*(b^*)\ell^*(\alpha(w)))\ell^*(x)-\alpha^*(R_{\hat{A}}^*(\alpha^*(b^*))w)\ell^*(x)
+\alpha^*(b^*L_{\hat{A}}^*(\alpha(w)))\ell^*(x)\rangle\\
&=&\langle-\alpha(\ell(\alpha(w))(\ell(x)u)),b^*\rangle-\langle\alpha(\ell(x)u)\apr\alpha^*(b^*),w\rangle+
\langle\alpha(w)\apr\alpha(\ell(x)w),b^*\rangle\\
&=&\langle-\alpha(\ell(\alpha(w))(\ell(x)u))-\alpha(wr(\alpha(\ell(x)u)))+\alpha(w)\apr\alpha(\ell(x)u),b^*\rangle.
\end{eqnarray*}
So Eq.~(\mref{eq:expan2}) holds if and only if Eq.~(\mref{eq:uvx1})
holds.
\end{proof}

\begin{coro}
Let $(A,\apr)$ be a $\bfk$-algebra with finite $\bfk$-dimension.
\begin{enumerate}
\item Let $(R, \rpr, \ell,r)$ be an $A$-bimodule $\bfk$-algebra with finite $\bfk$-dimension.
Let $\alpha,\beta:R\to A$ be two linear maps such that $\alpha$ is
an \tto $\calo$-operator of weight $\lambda$ with \bop $\beta$ of
\bwt $(\kappa,\mu)$, i.e., $\beta$ is an $A$-bimodule homomorphism
and the conditions $($\mref{eq:ksy}$)$ and $($\mref{eq:mueq}$)$ in
Definition~\mref{de:co} hold, and $\alpha$ and $\beta$ satisfy
Eq.~$($\mref{eq:gmybe}$)$. Then $\alpha-\alpha^{21}$ identified as an
element of
$(A\ltimes_{r^*,\ell^*}R^*)\otimes(A\ltimes_{r^*,\ell^*}R^*)$ is a
skew-symmetric solution of the \GAYBE~$($\mref{eq:maybe}$)$ if and only
if the following equations hold:
\begin{equation}
\lambda \ell(\alpha(u\rpr v))w=\lambda ur(\alpha(v\rpr w)),\quad
\forall u,v,w\in R,\mlabel{eq:lambdakmucon1}
\end{equation}
\begin{equation}
\lambda\alpha(u(vr(x)))=\lambda\alpha(u\rpr v)\apr x,\quad \forall
u,v\in R,x\in A,\mlabel{eq:lambdakmucon2}
\end{equation}
\begin{equation}
\lambda\alpha((\ell(x)u)\rpr v)=\lambda x\apr\alpha(u\rpr v),\quad
\forall u,v\in R,x\in A.\mlabel{eq:lambdakmucon3}
\end{equation}
In particular, when $\lambda=0$, i.e., $\alpha$ is an \tto
$\calo$-operator of weight zero with \bop $\beta$ of \bwt $(\kappa ,\mu)$,
$\alpha-\alpha^{21}$ identified as an element of
$(A\ltimes_{r^*,\ell^*}R^*)\otimes(A\ltimes_{r^*,\ell^*}R^*)$ is a
skew-symmetric solution of the \GAYBE~$($\mref{eq:maybe}$)$.
\mlabel{it:mybeco1}
\item Let $(R,\rpr, \ell,r)$ be an
$A$-bimodule $\bfk$-algebra with finite $\bfk$-dimension. Let $\alpha:R\to A$ be an
$\calo$-operator of weight $\lambda$. Then $\alpha-\alpha^{21}$
identified as an element of
$(A\ltimes_{r^*,\ell^*}R^*)\otimes(A\ltimes_{r^*,\ell^*}R^*)$ is a
skew-symmetric solution of the \GAYBE if and only if
Eq.~$($\mref{eq:lambdakmucon1}$)$, Eq.~$($\mref{eq:lambdakmucon2}$)$ and
Eq.~$($\mref{eq:lambdakmucon3}$)$ hold.\mlabel{it:mybeco2}
\item Let
$(V,\ell,r)$ be a bimodule of $A$ with finite $\bfk$-dimension. Let $\alpha,\beta:V\to A$ be
two linear maps such that $\alpha$ is an \tto $\calo$-operator
with \bop $\beta$ of \bwt $\kappa$. Then $\alpha-\alpha^{21}$
identified as an element of
$(A\ltimes_{r^*,\ell^*}V^*)\otimes(A\ltimes_{r^*,\ell^*}V^*)$ is a
skew-symmetric solution of the \GAYBE.\mlabel{it:mybeco3}
\item Let
$\alpha:A\to A$ be a linear endomorphism of $A$. Suppose that
$\alpha$ satisfies Eq.~$($\mref{eq:kmyb}$)$. Then $\alpha-\alpha^{21}$
identified as an element of
$(A\ltimes_{R^*,L^*}A^*)\otimes(A\ltimes_{R^*,L^*}A^*)$ is a
skew-symmetric solution of the \GAYBE.\mlabel{it:mybeco4}
\item Let
$(R,\rpr, \ell,r)$ be an $A$-bimodule $\bfk$-algebra of finite $\bfk$-dimension. Let
$\alpha,\beta:R\to A$ be two linear maps such that $\alpha$ is an
\tto $\calo$-operator with \bop $\beta$ of \bwt
$(\kappa,\mu)=(0,\mu)$, i.e., $\beta$ is an $A$-bimodule
homomorphism and the condition $($\mref{eq:mueq}$)$ in
Definition~\mref{de:co} holds, and $\alpha$ and $\beta$ satisfy
the following equation:
\begin{equation}
\alpha(u)\apr\alpha(v)-\alpha(\ell(\alpha(u))v+ur(\alpha(v)))=\mu\beta(u\rpr
v),\quad \forall u,v\in R.
\notag
\end{equation}
Then $\alpha-\alpha^{21}$ identified as an element of
$(A\ltimes_{r^*,\ell^*}R^*)\otimes(A\ltimes_{r^*,\ell^*}R^*)$ is a
skew-symmetric solution of the \GAYBE.\mlabel{it:mybeco5}
\end{enumerate}
\mlabel{co:motoaybe2}
\end{coro}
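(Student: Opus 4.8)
The plan is to deduce all five parts from Theorem~\mref{thm:maybeequi}, viewing the $A$-bimodule $\bfk$-algebra $(R,\rpr,\ell,r)$ merely as an $A$-bimodule. With this reading, the auxiliary product appearing in that theorem is $u\ast_\alpha v=\ell(\alpha(u))v+ur(\alpha(v))$ (with no $\rpr$ term), so $\tilde{\alpha}_-=\alpha-\alpha^{21}$ is a skew-symmetric solution of the \GAYBE\ in $A\ltimes_{r^*,\ell^*}R^*$ if and only if the four identities Eq.~(\mref{eq:biaspro}), Eq.~(\mref{eq:uvx1}), Eq.~(\mref{eq:uvx2}) and Eq.~(\mref{eq:uvx3}) hold. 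The key preliminary step is to record the defect
\[
D(u,v):=\alpha(u)\apr\alpha(v)-\alpha(\ell(\alpha(u))v+ur(\alpha(v)))=\lambda\alpha(u\rpr v)+\kappa\beta(u)\apr\beta(v)+\mu\beta(u\rpr v),
\]
the last equality being just Eq.~(\mref{eq:gmybe}) rewritten using the linearity of $\alpha$.

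First I would settle part~(\mref{it:mybeco1}). Substituting this closed form for $D$ into each of the four identities and splitting the result into its $\lambda$-part and its $(\kappa,\mu)$-part, I expect the $(\kappa,\mu)$-parts to cancel identically. For Eq.~(\mref{eq:biaspro}) this cancellation is exactly the relation $\ell(\kappa\beta(u)\apr\beta(v)+\mu\beta(u\rpr v))w=ur(\kappa\beta(v)\apr\beta(w)+\mu\beta(v\rpr w))$ already derived in the proof of Theorem~\mref{thm:ansatz}(\mref{it:an1}) from the balanced relations Eq.~(\mref{eq:ksy}) and Eq.~(\mref{eq:mueq}). For Eq.~(\mref{eq:uvx1})--Eq.~(\mref{eq:uvx3}) the $(\kappa,\mu)$-parts vanish because $\beta$ is an $A$-bimodule homomorphism (so $\beta(\ell(x)v)=x\apr\beta(v)$ and $\beta(vr(x))=\beta(v)\apr x$), together with associativity of $\apr$ and the compatibility axioms Eq.~(\mref{eq:twoalg1})--Eq.~(\mref{eq:twoalg3}), which give $(ur(x))\rpr v=u\rpr(\ell(x)v)$, $(u\rpr v)r(x)=u\rpr(vr(x))$ and $(\ell(x)u)\rpr v=\ell(x)(u\rpr v)$. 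What then survives is precisely the $\lambda$-part: Eq.~(\mref{eq:biaspro}) collapses to Eq.~(\mref{eq:lambdakmucon1}); Eq.~(\mref{eq:uvx1}) becomes a triviality (both sides reduce to $\lambda\alpha(u\rpr(\ell(x)v))$); Eq.~(\mref{eq:uvx2}) collapses to Eq.~(\mref{eq:lambdakmucon2}); and Eq.~(\mref{eq:uvx3}) collapses to Eq.~(\mref{eq:lambdakmucon3}). This would establish the stated equivalence, and since all three $\lambda$-equations carry the factor $\lambda$, they hold automatically when $\lambda=0$, yielding the final sentence of part~(\mref{it:mybeco1}).

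The remaining parts then follow by specialization. Part~(\mref{it:mybeco2}) is the case $\beta=0$, $\kappa=\mu=0$, where Eq.~(\mref{eq:gmybe}) reduces to the defining relation Eq.~(\mref{eq:aop}) of an $\calo$-operator of weight $\lambda$. Part~(\mref{it:mybeco3}) is the case of zero multiplication $\rpr=0$, in which Eq.~(\mref{eq:lambdakmucon1})--Eq.~(\mref{eq:lambdakmucon3}) hold vacuously, so no extra hypothesis is needed. Part~(\mref{it:mybeco4}) is part~(\mref{it:mybeco3}) applied to $(V,\ell,r)=(A,L,R)$ with $\beta=\id$, a balanced $A$-bimodule homomorphism, since Eq.~(\mref{eq:kmyb}) says exactly that $\alpha$ is an \tto $\calo$-operator with \bop $\id$ of \bwt $\kappa$. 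Part~(\mref{it:mybeco5}) is the case $\lambda=0$, $\kappa=0$ of part~(\mref{it:mybeco1}), for which the three equations again hold automatically.

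The bulk of the effort, and the step I expect to require the most careful bookkeeping, is the term-by-term reduction in part~(\mref{it:mybeco1}): each of the four identities of Theorem~\mref{thm:maybeequi} expands into many summands, and the whole point is to show that everything not proportional to $\lambda$ cancels by the bimodule-algebra axioms and the homomorphism property of $\beta$, leaving only the three transparent conditions. Once that separation of the $\lambda$- and $(\kappa,\mu)$-parts is organized cleanly, the specializations in parts~(\mref{it:mybeco2})--(\mref{it:mybeco5}) are immediate.
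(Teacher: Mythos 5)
Your proposal is correct and follows essentially the same route as the paper: the paper likewise applies Theorem~\ref{thm:maybeequi} to $R$ viewed as an $A$-bimodule, substitutes Eq.~(\ref{eq:gmybe}) into the four conditions to obtain explicit analogues of your $\lambda$- versus $(\kappa,\mu)$-split (its Eqs.~(\ref{eq:mybecoro1})--(\ref{eq:mybecoro4})), cancels the $(\kappa,\mu)$-parts via the balanced conditions and the bimodule-homomorphism property of $\beta$, and notes that the condition coming from Eq.~(\ref{eq:uvx1}) holds automatically, exactly as you observed. Your specializations for parts (\ref{it:mybeco2})--(\ref{it:mybeco5}) also match the paper's (taking $\rpr=0$ for part (\ref{it:mybeco3}) is equivalent to the paper's choice $\lambda=\mu=0$).
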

\begin{proof}
(\mref{it:mybeco1}) Since $\alpha$ is an \tto $\calo$-operator of
weight $\lambda$ with \bop $\beta$ of \bwt $(\kappa,\mu)$, by
Theorem~\mref{thm:maybeequi}, $\alpha-\alpha^{21}$ identified as
an element of
$(A\ltimes_{r^*,\ell^*}R^*)\otimes(A\ltimes_{r^*,\ell^*}R^*)$ is a
skew-symmetric solution of the \GAYBE~(\mref{eq:maybe}) if and only
if the following equations hold:
\begin{eqnarray}
&&-\lambda \ell(\alpha(u\rpr v))w+\kappa \ell(\beta(u)\apr\beta(v))w+\mu
\ell(\beta(u\rpr v))w\mlabel{eq:mybecoro1}\\
&=& -\lambda ur(\alpha(v\rpr
w))+\kappa ur(\beta(v)\apr\beta(w))+\mu ur(\beta(v\rpr
w)), \notag
\end{eqnarray}
\begin{eqnarray}
&&-\lambda\alpha((ur(x))\rpr
v)+\kappa \beta(ur(x))\apr\beta(v)+\mu\beta((ur(x))\rpr v)\mlabel{eq:mybecoro2}\\
&=& -\lambda\alpha(u\rpr(l(x)v))+\kappa \beta(u)\apr\beta(\ell(x)v) +\mu\beta(u\rpr(\ell(x)v)),
\notag
\end{eqnarray}
\begin{eqnarray}
&&-\lambda\alpha(u\rpr(vr(x)))+\kappa \beta(u)\apr\beta(vr(x)) +\mu\beta(u\rpr(vr(x)))\mlabel{eq:mybecoro3}\\
&=&
-\lambda\alpha(u\rpr v)\apr x+\kappa (\beta(u)\apr\beta(v))\apr
x+\mu\beta(u\rpr v)\apr x,\notag
\end{eqnarray}
\begin{eqnarray}
&&-\lambda\alpha((\ell(x)u)\rpr
v)+\kappa \beta(\ell(x)u)\apr\beta(v)+\mu\beta((\ell(x)u)\rpr v)\mlabel{eq:mybecoro4}\\
&=& -\lambda
x\apr\alpha(u\rpr v)+\kappa x\apr(\beta(u)\apr\beta(v))+\mu
x\apr\beta(u\rpr v),\notag
\end{eqnarray}
for any $u,v\in R,x\in A$. Since $\beta$ is an $A$-bimodule
homomorphism and the conditions (\mref{eq:ksy}) and
(\mref{eq:mueq}) in Definition~\mref{de:co}
hold, we have Eq.~(\mref{eq:lambdakmucon1}) holds if and only if
Eq.~(\mref{eq:mybecoro1}) holds, Eq.~(\mref{eq:lambdakmucon2})
holds if and only if Eq.~(\mref{eq:mybecoro3}) holds,
Eq.~(\mref{eq:lambdakmucon3}) holds if and only if
Eq.~(\mref{eq:mybecoro4}) holds and Eq.~(\mref{eq:mybecoro2})
holds automatically.
\medskip

\noindent (\mref{it:mybeco2}) This follows from
Item~(\mref{it:mybeco1}) by setting $\kappa =\mu=0$.
\medskip

\noindent (\mref{it:mybeco3}) This follows from
Item~(\mref{it:mybeco1}) by setting $\lambda=\mu=0$.
\medskip

\noindent (\mref{it:mybeco4}) This follows from
       Item~(\mref{it:mybeco3}) for $(V,\ell,r)=(A,L,R)$ and $\beta={\rm
       id}$.
\medskip

\noindent (\mref{it:mybeco5}) This follows from
Item~(\mref{it:mybeco1}) by setting $\lambda=\kappa =0$.
\end{proof}


%
%

\end{document}